\documentclass[12pt]{article} %***
\usepackage[sectionbib]{natbib}
\usepackage{array,epsfig,fancyheadings,rotating}
\usepackage[]{hyperref}  %<----modified by Ivan
\usepackage{xr-hyper}
%\externaldocument{Edgeworth_ver5_supp} 

%%%%%%%%%%%%%%%%%%%%%%%%%%%%%%%%%%%%%%%%%%%%%%%%%%%%%%%%%%%%%%%%%%%%%%%%%%%%%%%%%%%%%%%%%%%%%%%%%%%%%%%%%%%%%%%%%%%%%%%%%%%%

\textwidth=31.9pc
\textheight=46.5pc
\oddsidemargin=1pc
\evensidemargin=1pc
\headsep=15pt
\topmargin=.6cm
\parindent=1.7pc
\parskip=0pt

\usepackage{amsmath}
\usepackage{amssymb}
\usepackage{amsfonts}
\usepackage{multirow}
\usepackage{amsthm}
\usepackage{graphicx}
\usepackage{enumitem}
\usepackage{cleveref}

\usepackage{custom_tex}
  
\usepackage{geometry} % to change the page dimensions
\geometry{left=1in,right=1in,top=1in,bottom=1in}

\setcounter{page}{1}
\newtheorem{theorem}{Theorem}
\newtheorem{lemma}[theorem]{Lemma}
\newtheorem{corollary}[theorem]{Corollary}
\newtheorem{proposition}[theorem]{Proposition}
\theoremstyle{definition}
\newtheorem{definition}{Definition}

\newcommand{\lh}{\hat{\ell}}
\newcommand{\ssf}{\textsf{F}}

\newcommand{\mP}{\mathbb{P}}
\newcommand{\E}{\mathbb{E}}

\pagestyle{fancy}

%%%%%%%%%%%%%%%%%%%%%%%%%%%%%%%%%%%%%%%%%%%%%%%%%%%%%%%%%%%%%%%%%%%%%%%%%%%%%%%%%%%%%%%%%%%%%%%%%%%%%%%%%%%%%%%%%%%%%%%%%%%%
\pagestyle{fancy}

\lhead[\fancyplain{} \leftmark]{}
\chead[]{}
\rhead[]{\fancyplain{}\rightmark}
\cfoot{}
%\headrulewidth=0pt  %<-modified by Ivan

%%%%%%%%%%%%%%%%%%%%%%%%%%%%%%%%%%%%%%%%%%%%%%%%%%%%%%%%%%%%%%%%%%%%%%%%%%%%%%%%%%%%%%%%%%%%%%%%%%%%%%%%%%%%%%%%%%%%%%%%%%%%
%%%%%%%%%%%%%%%%%%%%%%%%%%%%%%%%%%%%%%%%%%%%%%%%%%%%%%%%%%%%%%%%%%%%%%%%%%%%%%%%%%%%%%%%%%%%%%%%%%%%%%%%%%%%%%%%%%%%%%%%%%%%

\begin{document}

%%%%%%%%%%%%%%%%%%%%%%%%%%%%%%%%%%%%%%%%%%%%%%%%%%%%%%%%%%%%%%%%%%%%%%%%%%%%%%%%%%%%%%%%%%%%%%%%%%%%%%%%%%%%%%%%%%%%%%%%%%%%
%%%%%%%%%%%%%%%%%%%%%%%%%%%%%%%%%%%%%%%%%%%%%%%%%%%%%%%%%%%%%%%%%%%%%%%%%%%%%%%%%%%%%%%%%%%%%%%%%%%%%%%%%%%%%%%%%%%%%%%%%%%%

\renewcommand{\baselinestretch}{2}

\iffalse
\markright{ \hbox{\footnotesize\rm Statistica Sinica
%{\footnotesize\bf 24} (201?), 000-000
}\hfill\\[-13pt]
\hbox{\footnotesize\rm
%\href{http://dx.doi.org/10.5705/ss.20??.???}{doi:http://dx.doi.org/10.5705/ss.20??.???}
}\hfill }
\fi

\markboth{\hfill{\footnotesize\rm JEHA YANG AND IAIN JOHNSTONE} \hfill}
{\hfill {\footnotesize\rm EDGEWORTH CORRECTION IN SPIKED PCA} \hfill}

\renewcommand{\thefootnote}{\fnsymbol{footnote}}
$\ $\par

%%%%%%%%%%%%%%%%%%%%%%%%%%%%%%%%%%%%%%%%%%%%%%%%%%%%%%%%%%%%%%%%%%%%%%%%%%%%%%%%%%%%%%%%%%%%%%%%%%%%%%%%%%%%%%%%%%%%%%%%%%%%

\fontsize{12}{14pt plus.8pt minus .6pt}\selectfont \vspace{0.8pc}
\centerline{\large\bf EDGEWORTH CORRECTION FOR THE LARGEST EIGENVALUE}
\centerline{\large\bf IN A SPIKED PCA MODEL 
%\footnote{Peter Hall visited Stanford many times, including a month long visit with Jeannie in 1988 [ck year]. The second author (IMJ) was generously hosted by Peter even more often both at ANU and Melbourne. Stimulating and enjoyable as those visits predictably were, we never discussed Edgeworth expansions. Fortunately, the clarity of Peter's exposition in his Bootstrap and Edgeworth book and his well-known fondness for the monograph of Petrov (1975) provided exactly what we needed for this project, begun after his most untimely passing.}
}
% \vspace{2pt} \centerline{\large\bf in a spiked PCA model}
\vspace{.4cm} \centerline{Jeha Yang and Iain M. Johnstone} \vspace{.4cm} \centerline{\it
Stanford University} \vspace{.55cm} \fontsize{9}{11.5pt plus.8pt minus
.6pt}\selectfont

%%%%%%%%%%%%%%%%%%%%%%%%%%%%%%%%%%%%%%%%%%%%%%%%%%%%%%%%%%%%%%%%%%%%%%%%%%%%%%%%%%%%%%%%%%%%%%%%%%%%%%%%%%%%%%%%%%%%%%%%%%%%

%\textit{``Peter Hall visited Stanford many times, including a month long visit with Jeannie in 1988 [ck year]. The second author (IMJ) was generously hosted by Peter even more often both at ANU and Melbourne. Stimulating and enjoyable as those visits predictably were, we never discussed Edgeworth expansions. Fortunately, the clarity of Peter's exposition in his Bootstrap and Edgeworth book and his well-known fondness for the monograph of Petrov (1975) provided exactly what we needed for this project, begun after his most untimely passing.''}

\begin{quotation}
\noindent {\it Abstract:}
We study improved approximations to the distribution of the largest
eigenvalue $\hat{\ell}$ of the sample covariance matrix of $n$
zero-mean Gaussian observations in dimension $p+1$. 
%with covariance $I_{p+1} + (\ell-1)vv^T$.  Thus, 
We assume that one population principal component
has variance $\ell > 1$ and the remaining `noise' components have
common variance $1$.  In the high dimensional limit
$p/n \to \gamma > 0$, we begin study of Edgeworth corrections to the
limiting Gaussian distribution of $\hat{\ell}$ in the supercritical
case $\ell > 1 + \sqrt \gamma$.  The skewness correction involves a
quadratic polynomial as in classical settings, but the coefficients
reflect the high dimensional structure.  The methods involve Edgeworth
expansions for sums of independent non-identically distributed
variates obtained by conditioning on the sample noise eigenvalues, and
limiting bulk properties \textit{and} fluctuations of these noise
eigenvalues.
\par

\vspace{9pt}
\noindent {\it Key words and phrases:}
Spiked PCA model, Roy's statistic, Edgeworth expansion
%Balanced incomplete block design, Hadamard matrix, nearly balanced incomplete block design, orthogonal array.
\par
\end{quotation}\par

\def\thefigure{\arabic{figure}}
\def\thetable{\arabic{table}}

\renewcommand{\theequation}{\thesection.\arabic{equation}}

\fontsize{12}{14pt plus.8pt minus .6pt}\selectfont

%%%%%%%%%%%%%%%%%%%%%%%%%%%%%%%%%%%%%%%%%%%%%%%%%
%%%%%%%%%%%%%%%%%%%%% Section 1 %%%%%%%%%%%%%%%%%%%%% 
%%%%%%%%%%%%%%%%%%%%%%%%%%%%%%%%%%%%%%%%%%%%%%%%%

\setcounter{section}{1} %***
\setcounter{subsection}{1} %***
\setcounter{equation}{0} %-1

\noindent {\bf 1. Introduction}
\label{sec:introduction}

Models for high dimensional data with low dimensional structure are
the focus of much current research. 
This paper considers one of the simplest such settings, the rank one
``spiked model'' with Gaussian data, in order to begin the study of
Edgeworth expansion approximations for high dimensional data.
Specifically, we work with the following simple model.

\textbf{Model (M).} \ Suppose that we observe 
$X = [x_1, \cdots, x_n]'$ where $x_1, \ldots, x_n$ are i.i.d from $N_{p+1}(0,\Sigma)$, and the
population covariance matrix $\Sigma = I + (\ell - 1) v v'$ for
some unit vector $v$. Suppose also that $p$ increases with $n$ so that
$\gamma_n = p/n \to \gamma \in (0,\infty)$ and that 
$\ell > 1 + \sqrt \gamma$. 

% Specifically, suppose we observe i.i.d. vectors $x_i \sim N_{p+1}(0, I
% + (\ell-1) v v')$, and let the rows $x_i'$ be collected into an
% $n \times (p+1)$ data matrix $X$.
Thus, one population principal component has variance $\ell > 1$ and
the remaining $p$ have common variance $1$.
% We focus on the asymptotic regime in which 
% %$\gamma_n = p/n \to \gamma \in (0,\infty)$. 

The \cite{baik2005phase} \textit{phase transition} is an important
phenomenon that appears in this high dimensional asymptotic regime. 
It concerns the largest eigenvalues in spiked models, which are of
primary interest in principal components analysis.
In the rank one special case, let $\hat{\ell}$ be the largest
eigenvalue of the sample covariance matrix
$S = n^{-1} X'X$.
Below the phase transition, $\ell < 1 + \sqrt \gamma$, and after a
centering and scaling that does not depend on $\ell$, asymptotically
$n^{2/3} \hat{\ell}$ has a Tracy-Widom distribution.
Above the phase transition, the `super-critical regime', the convergence
rate is $n^{1/2}$ and the limit Gaussian:
\begin{equation}
\label{eq:G-lim}
n^{1/2} [\hat{\ell} - \rho(\ell,\gamma_n)]/\sigma(\ell,\gamma_n) 
\stackrel{\mathcal{D}}{\to} N(0,1).
\end{equation}
The centering and scaling functions now depend on $\ell$:
\begin{equation}
\label{eq:rho-sigma}
\rho(\ell,\gamma) = \ell + \gamma \ell/(\ell -1), \qquad 
\sigma^2(\ell,\gamma) = 2 \ell^2 [1-\gamma/(\ell-1)^2].
\end{equation}
% \begin{align*}
%   \rho(\ell,\gamma) 
%       & = \ell + \gamma \ell/(\ell -1) \\
%   \sigma^2(\ell,\gamma) 
%       & = 2 \ell^2 [1-\gamma/(\ell-1)^2].
% \end{align*}
\citet*{baik2005phase} proved \eqref{eq:G-lim} for complex valued data using
structure specific to the complex case.
The real case was established using different methods by 
\citet{paul2007asymptotics},
under the additional assumption $\gamma_n - \gamma = o(n^{-1/2})$ and
with $\gamma_n$ in \eqref{eq:G-lim} replaced by $\gamma$. 
We will see below that \eqref{eq:G-lim} holds as stated without this
assumption. Consequently, we adopt the abbreviations
\begin{equation} \label{eq:abbrev-moms}
\rho_n = \rho(\ell,\gamma_n), \qquad
\sigma_n = \sigma(\ell,\gamma_n).
\end{equation}

\lhead[\footnotesize\thepage\fancyplain{}\leftmark]{}\rhead[]{\fancyplain{}\rightmark\footnotesize\thepage}% Page 2

The quality of approximation in asymptotic normality results such as 
\eqref{eq:G-lim} is often studied using Edgeworth expansions, e.g. 
\citet{hall1992bootstrap}.
However, our high dimensional setting appears to lie beyond the
standard frameworks for Edgeworth expansions, such as for example the
use of smooth functions of a \textit{fixed} dimensional vector of
means of independent random variables, as in \citet[Sec. 2.4]{hall1992bootstrap}.

%%%%%%%%%%%%%%%%%%%%%%%%%%%%%%%%%%%%%%%%%%%%%%%%%
%%%%%%%%%%%%%%%%%%%%% Section 2 %%%%%%%%%%%%%%%%%%%%% 
%%%%%%%%%%%%%%%%%%%%%%%%%%%%%%%%%%%%%%%%%%%%%%%%%

\setcounter{section}{2} %***
\setcounter{subsection}{1} %***
\setcounter{equation}{0} %-1

\noindent {\bf 2. Main Result}
\label{sec:main-result}

Our main result is a skewness correction for the normal approximation
\eqref{eq:G-lim} to the largest eigenvalue statistic. 
The simplest version of the result may be stated as follows.
As usual $\Phi$ and $\phi$ denote the standard Gaussian cumulative and
density respectively.

\begin{theorem} \label{th:main-first}
	Adopt \textbf{Model (M)}, and let $\hat{\ell}$ be the largest
	eigenvalue of $S = n^{-1} \sum_{i=1}^n x_i x_i'$, and
	let $R_n = n^{1/2} (\hat{\ell}-\rho_n)/\sigma_n$, where the centering
	and scaling are defined in \eqref{eq:rho-sigma} and
	\eqref{eq:abbrev-moms}.
	Then we have a first order Edgeworth expansion
	\begin{equation}
	\label{eq:cor-exp}
	\mP(R_n \leq x) 
	= \Phi(x) + n^{-1/2} p_1(x) \phi(x) + o(n^{-1/2}),
	\end{equation}
	valid uniformly in $x$, and with
	\begin{equation} \label{eq:p1x}
	p_1(x) = \sqrt 2 \left( \tfrac{1}{3}[(\ell-1)^3 + \gamma](1-x^2) -
	\tfrac{1}{2} \gamma \ell \right)((\ell-1)^2-\gamma)^{-3/2}.
	\end{equation}
\end{theorem}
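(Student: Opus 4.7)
The plan is to condition on the sample noise eigenvalues and reduce the problem to an Edgeworth expansion for a sum of independent but non-identically distributed variates. By rotational invariance of the Gaussian model, take $v = e_1$, so the data matrix splits as $X = [y, Z]$, with $y \in \mR^n$ having independent $N(0,\ell)$ entries and $Z \in \mR^{n\times p}$ independent with i.i.d.\ $N(0,1)$ entries. Diagonalize $Z'Z/n = UDU'$ with $D = \mathrm{diag}(\mu_1,\ldots,\mu_p)$, set $\eta = y'(I - Z(Z'Z)^{-1}Z')y/n$ and $t_i = \sqrt{n/(\ell\mu_i)}\, U_i'Z'y/n$. Conditionally on $Z$, the $t_i$ are i.i.d.\ $N(0,1)$, $n\eta/\ell \sim \chi^2_{n-p}$, and $\eta$ is independent of the $t_i$'s. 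The secular equation for the bordered matrix $S$ then takes the form
\begin{equation*}
1 = \frac{\eta}{\lh} + \frac{\ell}{n}\sum_{i=1}^p \frac{t_i^2}{\lh - \mu_i},
\end{equation*}
which implicitly defines $\lh$ as a smooth function of $\eta$ and $(t_i^2)$ on the event $\{\lh > \mu_1\}$.

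Next I would linearize this implicit relation around the conditional means $\bar\eta = \ell(n-p)/n$ and $\E t_i^2 = 1$. Let $\rho_\ast(Z)$ denote the deterministic root obtained by replacing $t_i^2$ by $1$ and $\eta$ by $\bar\eta$; by rigidity of the bulk noise eigenvalues and the convergence of the empirical Stieltjes transform $m_n(z) = n^{-1}\sum (z-\mu_i)^{-1}$ at the edge, $\rho_\ast(Z) = \rho_n + o_\mP(n^{-1/2})$. Implicit differentiation at $\rho_\ast(Z)$ yields an expansion
\begin{equation*}
\lh - \rho_\ast(Z) = A_n(\eta - \bar\eta) + \frac{B_n \ell}{n}\sum_i \frac{t_i^2 - 1}{\rho_\ast(Z) - \mu_i} + Q_n,
\end{equation*}
where $A_n, B_n$ are $Z$-measurable coefficients and $Q_n$ is a quadratic remainder in the centered variables. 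A routine computation of the first two conditional cumulants confirms that the deterministic scaling agrees with $\sigma_n^2/n$ in \eqref{eq:rho-sigma}; in particular this also establishes \eqref{eq:G-lim} without the extra rate assumption used by \citet{paul2007asymptotics}.

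I would then apply a classical Edgeworth expansion for sums of independent but non-identically distributed absolutely continuous variates (Petrov / Bhattacharya--Ranga Rao type) to the linearized statistic, conditionally on $Z$. The skewness correction on the right-hand side of \eqref{eq:cor-exp} then splits into two pieces: the cubic cumulant of the linear part, which after dividing by $\sigma_n^3$ produces the $(1-x^2)$ Hermite polynomial coefficient in \eqref{eq:p1x}, and the conditional mean of the quadratic remainder $Q_n$, which produces the constant term $-\tfrac12\gamma\ell$. Explicit evaluation uses the moments $\E(t_i^2-1)^k$ and $\E(\eta-\bar\eta)^k$, combined with the identities $n^{-1}\sum (\rho-\mu_i)^{-k} \to \int (\rho-x)^{-k}\,dF_\gamma(x)$, which for $\rho = \rho(\ell,\gamma)$ reduce via the Marchenko--Pastur Stieltjes transform to rational functions of $\ell$ and $\gamma$.

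Finally I would unconditional by averaging over $Z$. Here the required fact is that the conditional moments and cumulants concentrate at rate $o(n^{-1/2})$, which follows from a central limit theorem for smooth linear statistics of the $\mu_i$'s (their fluctuations are $O_\mP(n^{-1})$, comfortably smaller than needed), together with edge rigidity to control $\rho_\ast(Z) - \rho_n$. The main obstacle, I expect, will be step three: producing a uniformly-in-$x$ bound on the linearization, verifying Cramér's condition on the relevant high-probability event for the $\mu_i$'s, and showing that the quadratic remainder $Q_n$ contributes only to the mean bias at order $n^{-1/2}$ (and at smaller order to the distributional tail), rather than distorting the Hermite-polynomial shape of the correction. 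A secondary but nontrivial issue is handling the edge behavior: although $\ell > 1+\sqrt\gamma$ separates $\hat\ell$ from the bulk, one still needs uniform bounds on $(\rho_\ast - \mu_i)^{-1}$ near the Marchenko--Pastur edge, for which edge rigidity estimates for sample covariance eigenvalues are essential.
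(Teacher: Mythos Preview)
Your overall strategy matches the paper's: condition on the noise eigenvalues, apply a Petrov-type Edgeworth expansion to a weighted sum of centered $\chi^2_{(1)}$ variables, then uncondition. The paper uses the $n\times n$ companion matrix $n^{-1}Z_2 Z_2'$ rather than the $p\times p$ matrix $Z'Z/n$, and centers directly at the deterministic $\rho_n$ rather than at your data-dependent $\rho_\ast(Z)$; these are genuine but inessential differences.

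There is, however, a real gap in your identification of the constant term $-\tfrac12 \gamma\ell$ in $p_1(x)$. You attribute it to the conditional mean of the quadratic remainder $Q_n$, and you dismiss $\rho_\ast(Z)-\rho_n$ as $o_{\mP}(n^{-1/2})$. The magnitude claim is correct, but it is not enough: $n^{1/2}(\rho_\ast(Z)-\rho_n)/\sigma_n$ is $O_{\mP}(n^{-1/2})$ with a \emph{non-zero} mean of exactly that order, and this mean is what feeds the constant in $p_1(x)$. Concretely, the implicit-function theorem gives $\rho_\ast(Z)-\rho_n \approx c\, n^{-1} G_n(g_n)$ with $G_n(g_n)=p(F_n(g_n)-F_{\gamma_n}(g_n))$, and the Bai--Silverstein CLT says $G_n(g_n)\stackrel{\mathcal{D}}{\to} N(\mu(g_n),\sigma^2(g_n))$ with $\mu(g_n)=\gamma(\ell-1)((\ell-1)^2-\gamma)^{-2}\neq 0$. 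In the paper's version (Theorem~\ref{th:main-second}), the constant is explicitly $-\kappa_{2,n}^{-1/2}\mu(g_n)$; the quadratic-type remainder (their $\delta_n$) is shown to be ignorable at this order via a delta-method argument after a careful algebraic linearization that reabsorbs the leading quadratic piece into the weight function.

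So when you ``unconditional by averaging over $Z$'' and invoke the linear-statistics CLT only for concentration, you are using the variance part of Bai--Silverstein but discarding the mean part, which is precisely the ingredient that produces the high-dimensional constant correction. Your $Q_n$ will not by itself deliver $-\tfrac12\gamma\ell$; you must also track $\E[\rho_\ast(Z)-\rho_n]$ to order $n^{-1}$, and for that you need the nonzero limiting mean $\mu(g_n)$, not merely edge rigidity or concentration.
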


We compare \eqref{eq:cor-exp} with the previously known expression for
dimension $p$ fixed in the next section.
The effects of high dimensionality are seen both in the coefficient of
the ``usual'' polynomial $1 - x^2$ as well as in the additional
constant term proportional to $\gamma \ell$.

We turn to formulating the version of Theorem \ref{th:main-first} that
we actually 
prove, and in the process sketch some elements of our approach in
order to give a first indication of the role of high dimensionality in
the Edgeworth correction. 
Building on the approach 
% As an aid to formulating the main result we sketch some elements of
% our approach, which builds upon that 
of \citet{paul2007asymptotics}, 
the $n \times (p+1)$ data matrix may be partitioned as
$X = [ \sqrt \ell Z_1, Z_2]$, with the `signal' in the first column
and the remaining $p$ columns containing pure noise:
i.i.d. standard normal variates. 
Now consider the eigen decomposition $n^{-1} Z_2 Z_2' = U \Lambda U'$
in which $U$ is $n \times n$ orthogonal and the diagonal
matrix $\Lambda$ 
%= \diag(\lambda_1, \ldots, \lambda_{n \wedge p}, 0, \ldots,0) 
contains the ordered nonzero eigenvalues 
$\lambda_1 \geq \cdots \geq \lambda_{n \wedge p}$ 
of $n^{-1} Z_2 Z_2'$, supplemented by
zeros in the case $n > p$. 
It is a special feature of white Gaussian noise that $(U, \Lambda)$
are mutually independent, with $U$ being uniformly (i.e. Haar)
distributed on its respective space. 
In view of this, if we set $z = U'Z_1$, it follows that the
eigenvalues of $S$ depend only on $z$ and $\Lambda$, and that 
\begin{equation} \label{z_Lambda}
z = U' Z_1 \sim N(0,I_n), \quad z \perp \Lambda.
\end{equation}

The vector $z$ provides enough independent randomness for Gaussian limit
behavior of $\hat{\ell}$, conditional on $\Lambda$. 
In particular, for a function $f$ on $[0, \infty)$, we define
\begin{equation} \label{S_n_def}
S_n(f) = n^{-1/2} \sum_{i=1}^n f(\lambda_i) (z_i^2-1).
\end{equation}

As $n$ grows, we may also use the bulk regularity properties of
$\Lambda$.  Thus the empirical distribution $F_n$ of the $p$ sample
eigenvalues of $n^{-1} Z_2' Z_2$ converges to the Marchenko-Pastur distribution
$F_\gamma$ supported on
$[a(\gamma),b(\gamma)]$ if $\gamma \leq 1$ and with an atom $(1-\gamma^{-1})$ at $0$
if $\gamma > 1$, where 
$$ a(\gamma) = (1-\sqrt{\gamma})^2, \quad b(\gamma) = (1+\sqrt{\gamma})^2.$$
The `companion' empirical distribution $\ssf_n$ of the $n$ eigenvalues 
$(\lambda_1, \ldots, \lambda_n)$ 
of $n^{-1} Z_2 Z_2'$ converges
to the companion MP law $\ssf_\gamma = (1-\gamma)I_{[0,\infty)} +
\gamma F_\gamma$. Integrals against $F$ indicating one of these types of distributions will be
written in the form
\begin{equation*}
F(f) = \int f(\lambda) F(d \lambda).
\end{equation*}

Paul's Schur complement argument, reviewed in the proof section below, leads
to an equation for the fluctuation of $\hat{\ell}$ about its centering
$\rho_n$:
\begin{equation}
\label{eq:onestep}
n^{1/2} (\hat{\ell}-\rho_n)
= \frac{S_n(g_n)}{\ssf_{\gamma_n}(g_n^2)} + O_p(n^{-1/2}),
\end{equation}
where $g_n(\lambda) = (\rho_n - \lambda)^{-1}$. 
% It is shown in the Appendix that 
From \eqref{F_g_n_2}, 
$\ssf_{\gamma_n}(g_n^2) = 2 \sigma_n^{-2}$. 
The sum $S_n(g_n)$ is asymptotically normal given $\Lambda$,
with asymptotic variance $\ssf_{\gamma}(g^2)$, for
example via the Lyapounov CLT, and completing this argument yields the
asymptotic normality result \eqref{eq:G-lim}. 

A more accurate version of \eqref{eq:onestep} is needed for a first
Edgeworth approximation. Indeed we later show that
\begin{equation*}
 n^{1/2} (\hat{\ell}-\rho_n)
= \frac{S_n(g_n)+n^{-1/2}G_n(g_n)}{\ssf_{\gamma_n}(g_n^2)
	+n^{-1/2}G_n(\tilde{g}_n) + O_p(n^{-1})},
\end{equation*}
where $\tilde{g}_n$ is defined later.
This expression involves the discrepancy between a trace and its
centering:
\begin{equation*}
G_n(f) = \sum_{i=1}^n f(\lambda_i) - n \int f(\lambda) \ssf_{\gamma_n}(d \lambda)
= n ( \ssf_n(f)  - \ssf_{\gamma_n}(f) ) = p(F_n(f) - F_{\gamma_n}(f)).
\end{equation*}
This centered linear statistic, though unnormalized, is $O_p(1)$, and indeed, according to the CLT of 
\citet{bai2004clt}, for suitable $f$ is asymptotically normal:
\begin{equation} \label{eq:BSlim}
G_n(f) \stackrel{\mathcal{D}}{\to} N(\mu(f), \sigma^2(f)).
\end{equation}

We use a first term Edgeworth approximation to the distribution of
$S_n(g_n)$ conditional on $\Lambda$,
using results for sums of independent non-identically distributed
variables described in \citet[Ch VI.]{petrov1975sums}.
This uses the conditional
cumulants of $S_n$ for
$j = 2, 3$, given by
\begin{equation*}
\frac{d^j}{dt^j} \log \E [e^{itS_n}|\Lambda] \vert_{t=0} 
= \kappa_j n^{-1} \sum_{i=1}^n g_n^j(\lambda_i),
\end{equation*}
where, in turn, $\kappa_j = 2^{j-1} (j-1)!$ are the cumulants of $z^2
- 1  \sim \chi_{(1)}^2 - 1$.
A deterministic asymptotic approximation to these conditional
cumulants is then given by
\begin{equation}
\label{eq:kappajn}
\kappa_{2,n} = 2 \ssf_{\gamma_n}(g_n^2), \qquad 
\kappa_{3,n} = 8 \ssf_{\gamma_n}(g_n^3).
\end{equation}
With these preparations we are ready for the main theorem. 

\begin{theorem}
	\label{th:main-second}
	With the assumptions of Theorem \ref{th:main-first}, we have 
	% a first order 
	the Edgeworth expansion 
	\begin{equation}
	\label{eq:main-exp}
	\mP(R_n \leq x) 
	= \Phi(x) + n^{-1/2} p_{1,n}(x) \phi(x) + o(n^{-1/2}),
	\end{equation}
	valid uniformly in $x$, and with
	\begin{equation*}
	p_{1,n}(x) = \tfrac{1}{6} \kappa_{2,n}^{-3/2} \kappa_{3,n} (1-x^2) 
	- \kappa_{2,n}^{-1/2} \mu(g_n),
	\end{equation*}
	for $g_n(\lambda) = (\rho_n - \lambda)^{-1}$ and $\kappa_{j,n}$
	defined by \eqref{eq:kappajn}, and $\mu(\cdot)$ the asymptotic mean 
	in the Bai-Silverstein limit \eqref{eq:BSlim}.
\end{theorem}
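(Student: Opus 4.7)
The plan is to build on Paul's Schur complement argument to produce a refined one-step expansion, apply a conditional Edgeworth expansion given the noise eigenvalues $\Lambda$, and then average over $\Lambda$ using the Bai-Silverstein CLT \eqref{eq:BSlim}.

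First, I would push Paul's argument one order further than is required for \eqref{eq:onestep}. Using the decomposition $X = [\sqrt{\ell} Z_1, Z_2]$, the diagonalization $n^{-1} Z_2 Z_2' = U \Lambda U'$, and $z = U' Z_1$, the defining equation for $\hat{\ell}$ is a scalar identity in $\hat{\ell}$, $\Lambda$, and the coordinates of $z$. Expanding this identity to one higher order than in \citet{paul2007asymptotics} isolates both $S_n(g_n)$ and the trace fluctuation $G_n(g_n)$ in the numerator, and produces a companion function $\tilde g_n$ multiplying the derivative piece in the denominator, yielding the refined ratio shown in the excerpt. Taylor-expanding the denominator, and using $\ssf_{\gamma_n}(g_n^2) = 2 \sigma_n^{-2}$ together with $\sigma_n/2 = \kappa_{2,n}^{-1/2}$, reduces $R_n$ to $T_n + n^{-1/2} Y_n + o_p(n^{-1/2})$, where $T_n = \kappa_{2,n}^{-1/2} S_n(g_n)$ and $Y_n = \tfrac{1}{2} \sigma_n G_n(g_n) - \tfrac{1}{4} \sigma_n^3 S_n(g_n) G_n(\tilde g_n)$.

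Second, I would establish a conditional Edgeworth expansion for $T_n$ given $\Lambda$. Conditionally, $T_n$ is a normalized sum of independent centered variates $\kappa_{2,n}^{-1/2} n^{-1/2} g_n(\lambda_i)(z_i^2-1)$ with individual cumulants governed by $\kappa_j = 2^{j-1}(j-1)!$. The conditional variance and third cumulant are $\ssf_n(g_n^2)/\ssf_{\gamma_n}(g_n^2)$ and $n^{-1/2} \kappa_{2,n}^{-3/2} \cdot 8 \ssf_n(g_n^3)$, and both concentrate at their deterministic limits via the bulk convergence $\ssf_n \to \ssf_\gamma$. In the supercritical regime $\rho_n$ is separated from the Marchenko-Pastur support $[a(\gamma), b(\gamma)]$, so $g_n$ is bounded on a high-probability $\Lambda$-event, and the Lyapunov plus Cram\'er conditions of \citet[Ch. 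VI]{petrov1975sums} hold uniformly there, giving
\[
\mP(T_n \leq x \mid \Lambda) = \Phi(x) + n^{-1/2} \tfrac{1}{6} \kappa_{2,n}^{-3/2} \hat\kappa_3(\Lambda)(1-x^2) \phi(x) + o(n^{-1/2})
\]
uniformly in $x$, where $\hat\kappa_3(\Lambda) = 8 \ssf_n(g_n^3)$.

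Third, I would integrate out $\Lambda$ by a delta-method argument. Since $\E[S_n(g_n) \mid \Lambda] = 0$, the cross term in $Y_n$ has zero unconditional mean, so $\E[Y_n] \to \tfrac{1}{2}\sigma_n \mu(g_n) = \kappa_{2,n}^{-1/2} \mu(g_n)$ by \eqref{eq:BSlim}. Writing $\mP(R_n \leq x) = \E[\mP(T_n \leq x - n^{-1/2} Y_n \mid \Lambda, z)]$ and expanding $\Phi(x - n^{-1/2} Y_n) = \Phi(x) - n^{-1/2} \phi(x) Y_n + O(n^{-1} Y_n^2)$, the mean-zero component of $Y_n$ contributes only at order $n^{-1}$ after taking expectation, while $\E[Y_n]$ supplies the constant $-\kappa_{2,n}^{-1/2} \mu(g_n)$ in $p_{1,n}$. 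Replacing $\hat\kappa_3(\Lambda)$ by its deterministic limit $\kappa_{3,n} = 8 \ssf_{\gamma_n}(g_n^3)$, valid at this order again via Bai-Silverstein applied to $G_n(g_n^3)$, delivers the skewness term and completes the identification of $p_{1,n}$.

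The main obstacles will be Step~1 together with the averaging in Step~3. The refined Schur-complement identity requires careful bookkeeping of every $n^{-1/2}$ contribution and a demonstration that the remainder is uniformly $O_p(n^{-1})$. The delicate point in the averaging is to confirm that the cross term $S_n(g_n) G_n(\tilde g_n)$ contributes through its vanishing mean only, i.e.\ that no $x$-linear piece $x \cdot \E[G_n(\tilde g_n)]$ creeps into $p_{1,n}$ from the conditional rescaling of the denominator; this should follow from the same mean-zero argument but needs explicit verification. Uniformity of the conditional Edgeworth with $o(n^{-1/2})$ error on a high-probability $\Lambda$-event, so that averaging preserves the rate, is a secondary technical issue requiring uniform control of the Lyapunov ratio and of the Cram\'er smoothness of the conditional characteristic function of $T_n$.
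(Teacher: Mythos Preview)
Your overall architecture --- refine Paul's Schur-complement identity to one more order, apply a conditional Edgeworth expansion for a sum of independent terms via \citet{petrov1975sums}, then integrate out $\Lambda$ using Bai--Silverstein --- is exactly the paper's. The gap is in how you treat the denominator fluctuation, and it is not merely a bookkeeping issue.

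You take the informal display in the excerpt at face value and model the order-$n^{-1/2}$ denominator correction as $G_n(\tilde g_n)$, hence $\Lambda$-measurable. The detailed derivation shows otherwise: the master equation is
\[
n^{1/2}(\hat\ell-\rho_n)=\frac{\rho_n\bigl(S_n(g_n)+n^{-1/2}G_n(g_n)\bigr)}{2\rho_n\sigma_n^{-2}-n^{-1/2}S_n(g_nh_n)+\delta_n},\qquad h_n=r_n-m_1g_n,
\]
and the leading denominator fluctuation is an $S_n$ term, i.e.\ a sum over $z_i^2-1$, \emph{not} $\Lambda$-measurable. (The constant $r_n$ is chosen so that $\ssf_{\gamma_n}(g_n^2h_n)=0$, but that is a separate device.) Your Taylor expansion therefore produces a cross term proportional to $S_n(g_n)S_n(g_nh_n)$, which is quadratic in the $z_i^2-1$. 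Step~3 then fails as written: the identity $\mP(R_n\le x)=\E[\mP(T_n\le x-n^{-1/2}Y_n\mid\Lambda,z)]$ is vacuous (conditioning on $z$ determines $T_n$), and conditioning on $\Lambda$ alone does not reduce $Y_n$ to a deterministic shift. A ``mean-zero'' argument does not by itself dispose of a correlated $O_p(n^{-1/2})$ perturbation at the Edgeworth scale; you would need an Edgeworth expansion for a genuinely quadratic form in the $z_i$.

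The paper avoids this by \emph{linearizing the event} rather than Taylor-expanding the ratio. Multiplying $\{R_n\le x\}$ through by the (positive) denominator, the term $S_n(g_nh_n)$ is multiplied by the deterministic $x$, not by $R_n$, and the event becomes $\{M_n-\delta_n x_n\le 2\sigma_n^{-1}x\}$ with
\[
M_n=S_n\bigl((1+n^{-1/2}x_nh_n)g_n\bigr)+n^{-1/2}G_n(g_n).
\]
Conditionally on $\Lambda$ this is a shifted sum of independent terms $c_{ni}(z_i^2-1)$ with $x$-dependent coefficients $c_{ni}$, so Petrov's theorem applies directly. The price is that the regularity conditions must now be checked \emph{uniformly in $x\in\RR$}, which the paper does via a moment inequality $C\max_i c_{ni}^2\le \widebar V_n$ on a high-probability event. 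The remaining pieces of your plan --- tail bounds separating $\lambda_1$ from $\rho_n$ and $\hat\ell$, replacing $G_n(g_n)$ by $\mu(g_n)$ via a moment-convergence corollary of Bai--Silverstein, and a delta-method step to discard the genuinely $O_p(n^{-1})$ remainder $\delta_n$ --- match the paper's Steps~1, 3 and 4. The linearization in Step~2 is the idea you are missing.
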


The structure of $p_{1,n}(x)$ as an even quadratic polynomial is the
same as in the smooth function of means model 
\citep[Theorem 2.2]{hall1992bootstrap}. 
In our high dimensional setting, 
the first term in $p_{1,n}(x)$ reflects the Edgeworth approximation to
$S_n(g_n)$ conditional on $\Lambda$, while the second shows 
the effects of fluctations of $\Lambda$. 
% In the Appendix, we derive more explicit evaluations
From \eqref{F_g_n_2}, \eqref{F_g_n_3} and \eqref{mu_g_n}, we have more explicit evaluations
\begin{align*}
\kappa_{2,n} &= 2(1 - \ell^{-1})^2 ((\ell-1)^2 - \gamma_n)^{-1} =  4 \sigma_n^{-2}, \\
\kappa_{3,n}  &= 8(1- \ell^{-1})^3 ((\ell-1)^3 + \gamma_n) ((\ell-1)^2 - \gamma_n)^{-3}, \\
\mu(g_n) &=  \gamma_n (\ell-1) ((\ell-1)^2 - \gamma_n)^{-2},
\end{align*}
which lead to an explicit form of the first order
correction term 
\begin{equation*}
p_{1,n}(x) = \sqrt{2}
\left( \tfrac{1}{3}[(\ell-1)^3 + \gamma_n](1-x^2) -
\tfrac{1}{2} \gamma_n \ell \right)((\ell-1)^2-\gamma_n)^{-3/2}. 
% \p{ \frac{(l-1)^3+\gamma_n}{3}(1-x^2) - \frac{\gamma_n l}{2}} ((l-1)^2
% - \gamma_n)^{-3/2}. 
\end{equation*}
Since the error term is $o(n^{-1/2})$ and $\gamma_n = \gamma + o(1)$,
we may replace $\gamma_n$ by $\gamma$ in the previous display and recover 
Theorem \ref{th:main-first}.

\bigskip 
%\newpage 
\textbf{Remark.} \ %We have already noted 
To emphasize the advantage of using  $\gamma_n = p/n$ rather
than $\gamma$ in the
centering and scaling formulas,
% To emphasize the point, 
note that if $\gamma_n = \gamma + a n^{-1/2}$,
then the limiting distribution of
\begin{equation*}
\check R_n = n^{1/2}[\hat{\ell}-\rho(\ell,\gamma)]/\sigma(\ell,\gamma)
\end{equation*}
has a non-zero mean $\alpha = \alpha(a,\ell,\gamma)$. 
The situation is yet more delicate for the skewness correction: if
$\gamma_n = \gamma + b n^{-1}$, then
\begin{equation*}
\mP(\check R_n \leq x) - \mP( R_n \leq x)
= n^{-1/2} (\beta_0 + \beta_1 x) \phi(x) + o(n^{-1/2})
\end{equation*}
for constants $\beta_1, \beta_0$ depending on $b,\ell, \gamma$.

\textbf{Remark.} A parallel result for rank one perturbations of the
Gaussian Orthogonal Ensemble  is available.
Consider a data matrix  $X = \theta e_1 e_1^{T} + Z$ where
$\theta > 1$ and $Z$ is $p \times p$ symmetric with
$Z_{ii} \sim N(0,2/p)$ and $Z_{ij} \sim N(0,1/p)$ for $i > j$, and
$p \to \infty$.  
%Then our Roy's statistic $\hat{\theta}$ is simply 
The largest eigenvalue of $X$, denoted $\hat{\theta}$, converges a.s. to
$\rho = \theta + \theta^{-1}$, 
and with $\sigma =
\sqrt{2(1-\theta^{-2})}$,
the quantity $R_p = \sqrt{p}(\hat{\theta} - \rho) / \sigma$ is
asymptotically standard Gaussian
\citep[Theorem 5.1]{benaych2011fluctuations}.
As is well known, the empirical spectral distribution
of $Z^{[2:p, 2:p]}$ converges weakly to the \textit{semicircle
	law} $F_{sc}$ with density 
$\frac{1}{2\pi}\sqrt{4-x^2}$ on the interval $[-2, 2])$.
Our method,
along with CLT for linear spectral statistics $F_{sc}(f)$ of
\cite{bai2005on} leads to a first order Edgeworth correction for
$R_p$:
%$\sqrt{p}(\hat{\theta} - \rho) / \sigma$ where
\[
p_{1}(x) = \frac{\sqrt{2}}{(\theta^2 - 1)^{3/2}} \left( \frac{1-x^2}{3}
- \frac{1}{2} \right), 
\]
which has a structure analogous to that of our main result.

%\newpage
\bigskip
\textbf{Comparison with fixed $p$.} \ 
In classical asymptotic theory, when $n \to \infty$ with $p$ fixed, 
asymptotically $\hat{\ell} \sim N(\ell, 2 \ell^2)$.
Introduce therefore $\check R_n = \sqrt n(\hat{\ell}-\ell)/(\sqrt 2 \ell)$.
When specialized to the skewness correction term, Theorem 2.1 of \citet{muirhead1975asymptotic}
reads
\begin{equation}
\label{eq:mc75}
\mP(\check R_n \leq x)
= \Phi(x) + n^{-1/2} \bigg(\frac{\sqrt 2}{3}(1-x^2) -
\frac{p}{\sqrt 2(\ell-1)}\bigg) \phi(x) + O(n^{-1}).
\end{equation}
Formally setting $\gamma = 0$ in \eqref{eq:p1x} of Theorem
\ref{th:main-first}, we get only the term $p_1(x) = (\sqrt 2/3)(1-x^2)$. 
To see that the two results are nevertheless consistent, write 
$\rho_n = \ell(1+b_n)$ and $\sigma_n = \sqrt 2 \ell c_n$ where $b_n = \gamma_n/(\ell-1)$ and $c_n = [1-\gamma_n/(\ell-1)^{2}]^{1/2} $, 
so that
\begin{equation*}
R_n = \sqrt{n} \frac{\hat{\ell}-\ell -b_n \ell}{\sqrt 2 \ell c_n}
= c_n^{-1}(\check R_n - d_n),
\end{equation*}
where $d_n = \sqrt{n/2} b_n = \sqrt{n/2} \gamma_n  /(\ell-1) = (2n)^{-1/2} p/(\ell-1)$ is
the second term in \eqref{eq:mc75}.
Applying \eqref{eq:mc75} at $\check x_n = c_n x + d_n$, we find
\begin{equation*}
\mP(R_n \leq x) 
= \mP(\check R_n \leq \check x_n)
= \Phi(\check x_n) + [n^{-1/2} \tfrac{\sqrt 2}{3} (1 - \check x_n^2)
- d_n] \phi(\check x_n) + O(n^{-1}).
\end{equation*}
Observe that $\Phi(\check x_n) - d_n \phi(\check x_n) = \Phi(c_n x) + O(d_n^2)$ with $d_n = O(n^{-1/2})$, 
and $c_n = [1-\gamma_n/(\ell-1)^{2}]^{1/2} = 1 + O(n^{-1})$.
Therefore, $\check x_n = x + O(n^{-1/2})$ and $ c_n x = x + O(n^{-1})$,
%[WITHOUT UNIFORMITY]
yielding
\begin{equation*}
\mP(R_n \leq x) 
= \Phi(x) + n^{-1/2} \tfrac{\sqrt 2}{3} (1-x^2) \phi(x) + O(n^{-1}),
\end{equation*}
and so we do recover agreement with $\gamma = 0$ in \eqref{eq:p1x}.

\bigskip
%\newpage
\textbf{Hermite polynomials and numerical comparisons.} \ 
It is helpful to view Edgeworth expansions in terms of Hermite
polynomials $H_n(x)$, defined by
$H_n(x) \phi(x) = (-d/dx)^n \phi(x)$. In particular,
$H_n(x) = 1, x, x^2-1$ and $x^3 - 3x$ for $n = 0,1,2$ and $3$. 
The Edgeworth approximation of Theorem \ref{th:main-second} then becomes
% \begin{equation*}
%   F_E  = \Phi - n^{-1/2}(\alpha_2 H_2 + \alpha_0) \phi, \qquad \quad
%   \alpha_2  = \frac{\sqrt 2}{3}
%              \frac{h^3+\gamma}{(h^2-\gamma)^{3/2}}, \qquad 
%   \alpha_0 = \frac{1}{\sqrt 2} \frac{\gamma l}{(h^2-\gamma)^{3/2}}.
% \end{equation*}
\begin{align*}
F_E & = \Phi - n^{-1/2}(\alpha_2 H_2 + \alpha_0) \phi
\end{align*}
with $h = \ell -1$ and
\begin{align*}
\alpha_2 & = \frac{\sqrt 2}{3}
\frac{h^3+\gamma_n}{(h^2-\gamma_n)^{3/2}}, \qquad 
\alpha_0 = \frac{1}{\sqrt 2} \frac{\gamma_n l}{(h^2-\gamma_n)^{3/2}}.
\end{align*}
Since $(d/dx) H_n(x) = - H_{n+1}(x)$, the Edgeworth corrected density
is given by
\begin{equation*}
f_E = \phi + n^{-1/2} (\alpha_2 H_3 + \alpha_0 H_1) \phi.
\end{equation*}
The relative error
\begin{equation*}
\frac{f_E-\phi}{\phi} = n^{-1/2} q, \qquad \qquad
q = \alpha_2 H_3 + \alpha_0 H_1,
\end{equation*}
is a cubic polynomial with positive leading coefficient.
It is easy to verify that the three roots, namely $0, \pm
(3-\alpha_0/\alpha_2)^{1/2}$ are real when $\ell > 1 + \sqrt{\gamma_n}$.
Hence the Edgeworth density approximation is necessarily negative for
$\hat{\ell}$ sufficiently small, and intersects the normal density
three times. 

We now show numerical examples in which the Edgeworth corrected
`density' provides a better approximation to the distribution of $R_n$
than does the standard normal.
The parameters
\begin{equation*}
n \in \{ 50, 100 \} ; \qquad  \gamma_n \in \{0.1, 1\} ; \qquad \ell \text{-factor}
:=  \ell / (1 + \sqrt{\gamma_n}) - 1  \in \{0.3, 0.5\}, 
\end{equation*}
are chosen so that 
$n$ is neither too small for asymptotics to be meaningful nor too
large to distinguish $f_E(x)$ and $\phi(x)$, 
$\gamma_n$ is close to either 0 or 1, 
and $\ell$ is moderately separated from the (finite version) critical
point $1+\sqrt{\gamma_n}$.

Figures \ref{figure_0.3} and \ref{figure_0.5}
\begin{figure}[tb] 
	\caption{Plots for $l$-factor $= 0.3$} \label{figure_0.3}
	\begin{minipage}[b]{0.49\linewidth}
		\centering
		\includegraphics[width = 80mm]{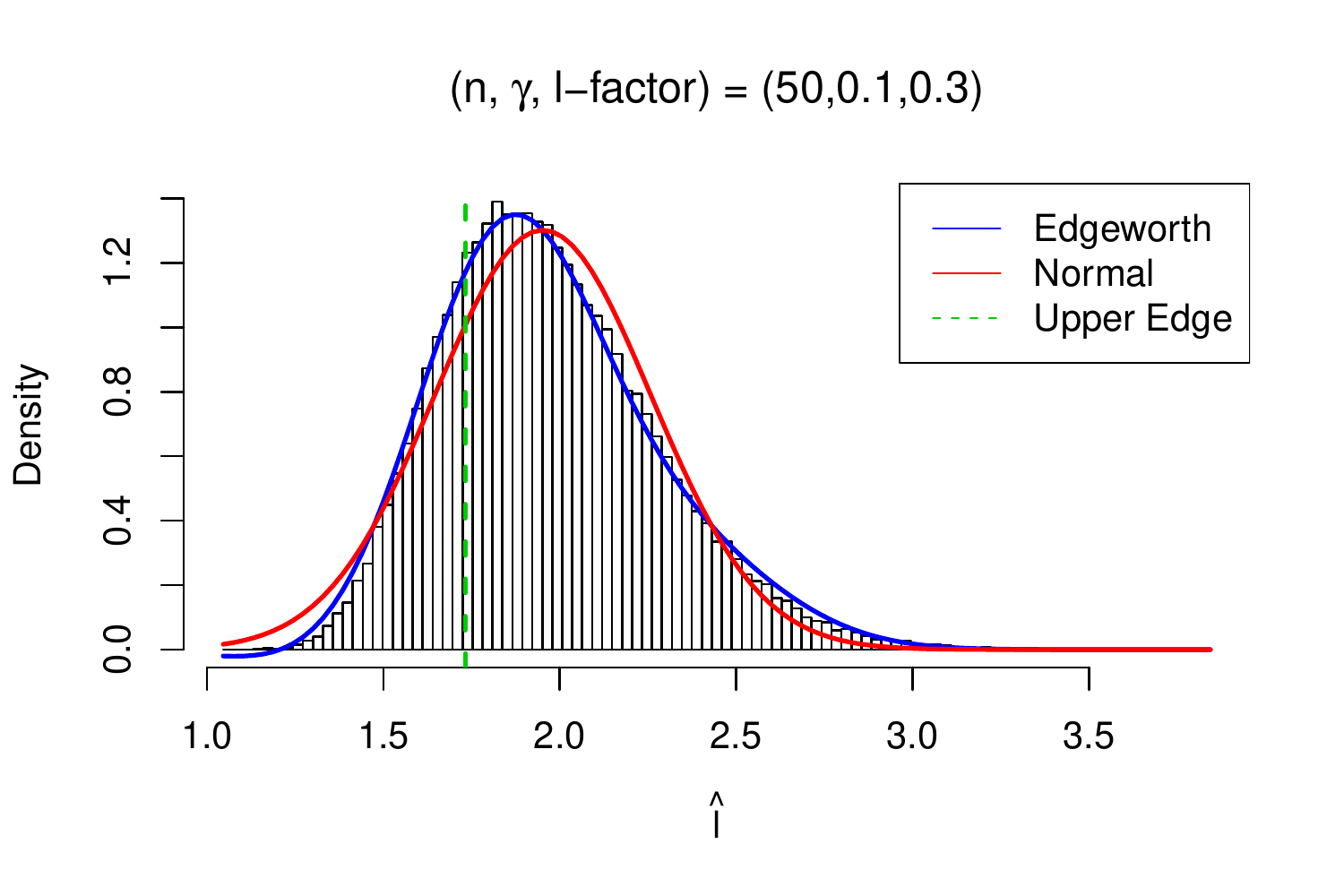}
	\end{minipage}
	\hfill	
	\begin{minipage}[b]{0.49\linewidth}
		\centering
		\includegraphics[width = 80mm]{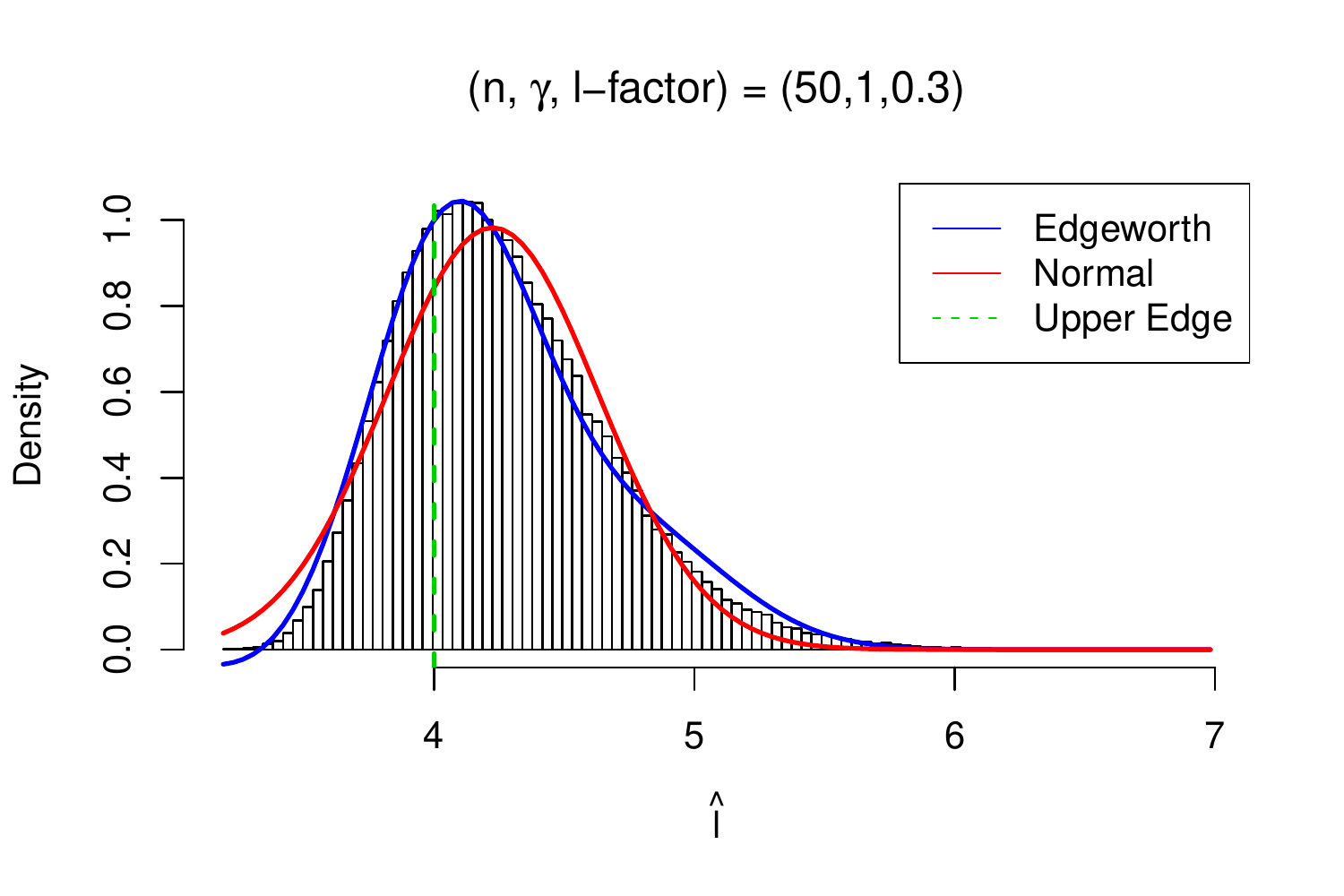}
	\end{minipage}
	\\
	\begin{minipage}[b]{0.49\linewidth}
		\centering
		\includegraphics[width = 80mm]{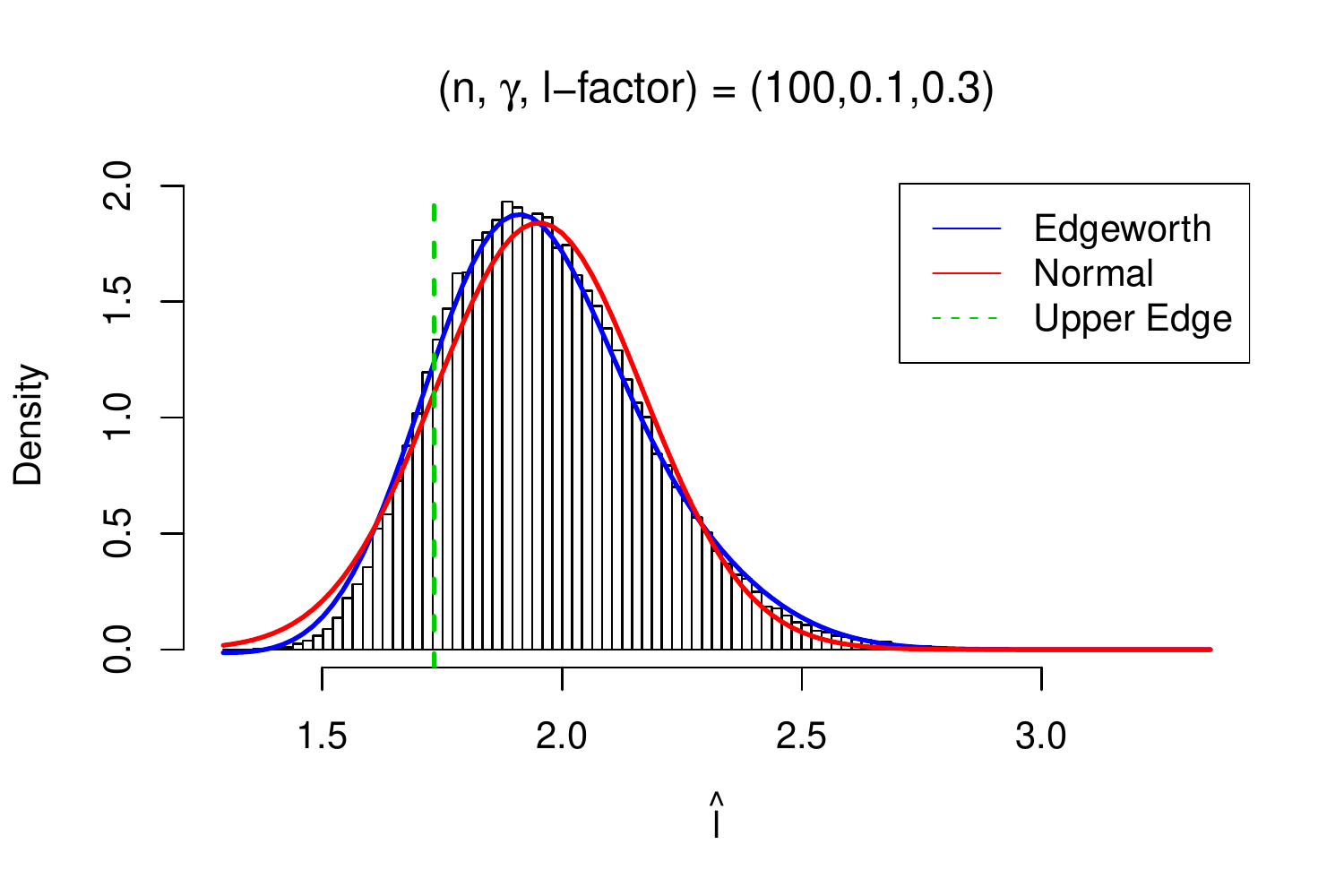}
	\end{minipage}
	\hfill	
	\begin{minipage}[b]{0.49\linewidth}
		\centering
		\includegraphics[width = 80mm]{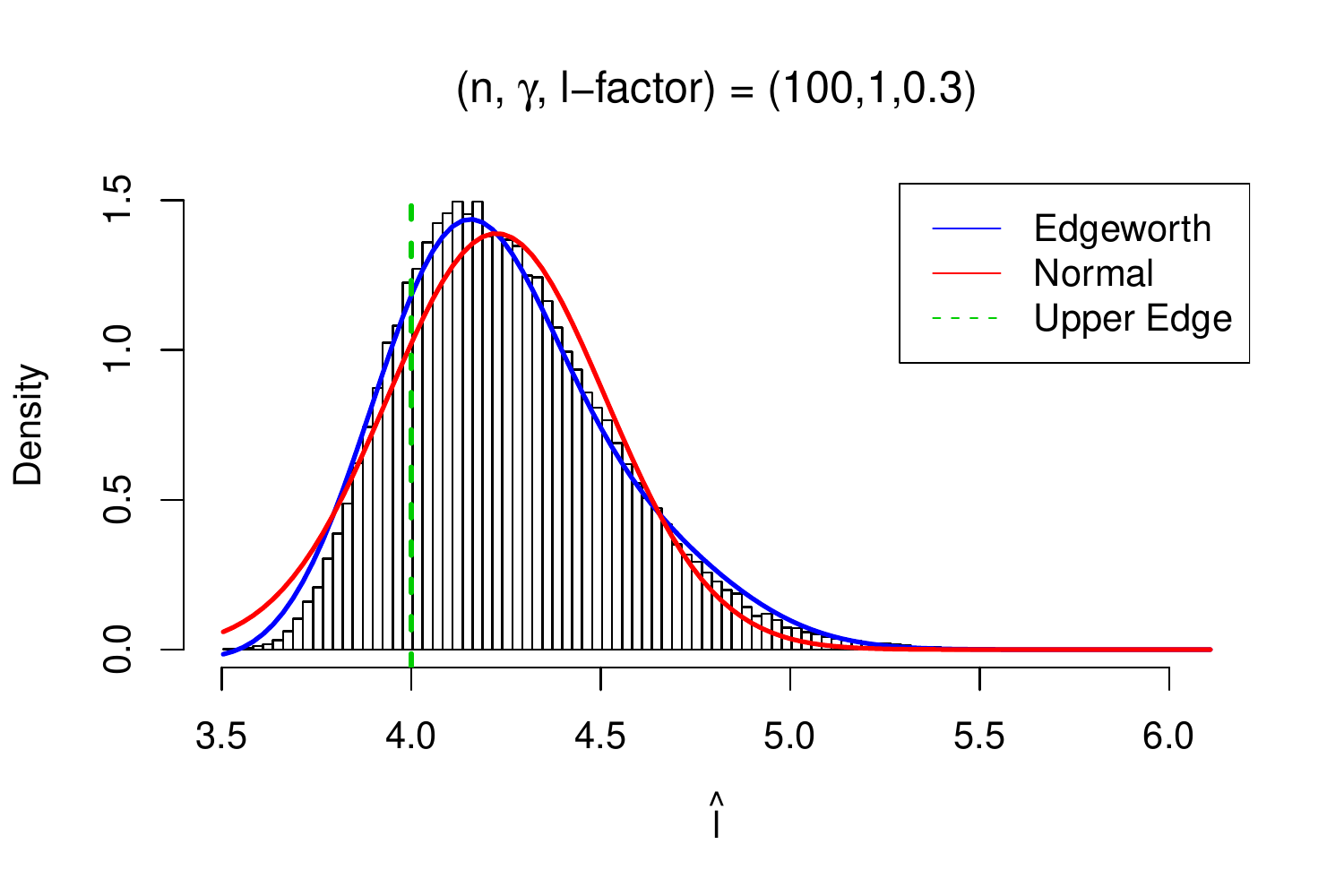}
	\end{minipage}
\end{figure}
\begin{figure}[htb] 
	\caption{Plots for $l$-factor $= 0.5$} \label{figure_0.5}
	\begin{minipage}[b]{0.49\linewidth}
		\centering
		\includegraphics[width = 80mm]{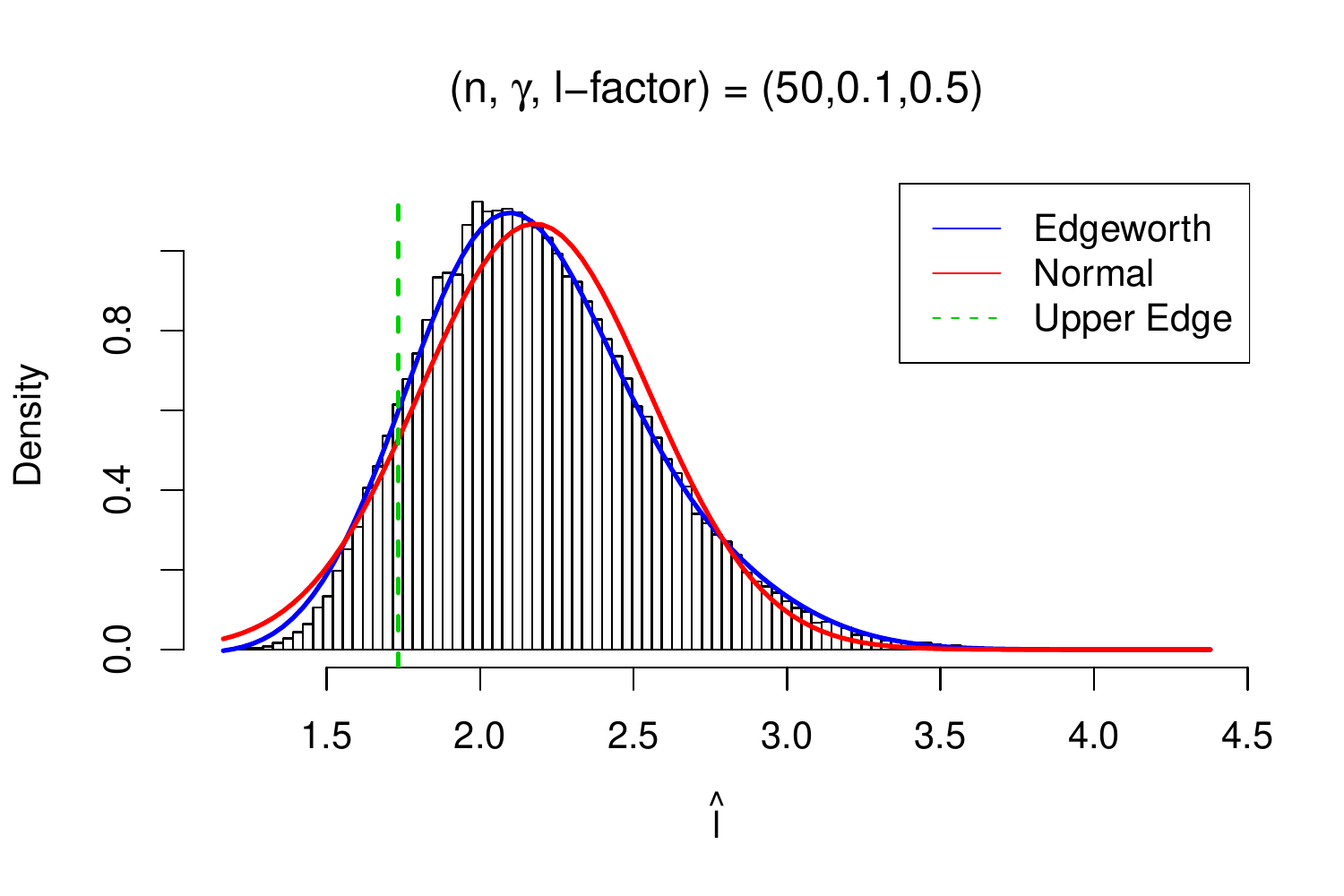}
	\end{minipage}
	\hfill	
	\begin{minipage}[b]{0.49\linewidth}
		\centering
		\includegraphics[width = 80mm]{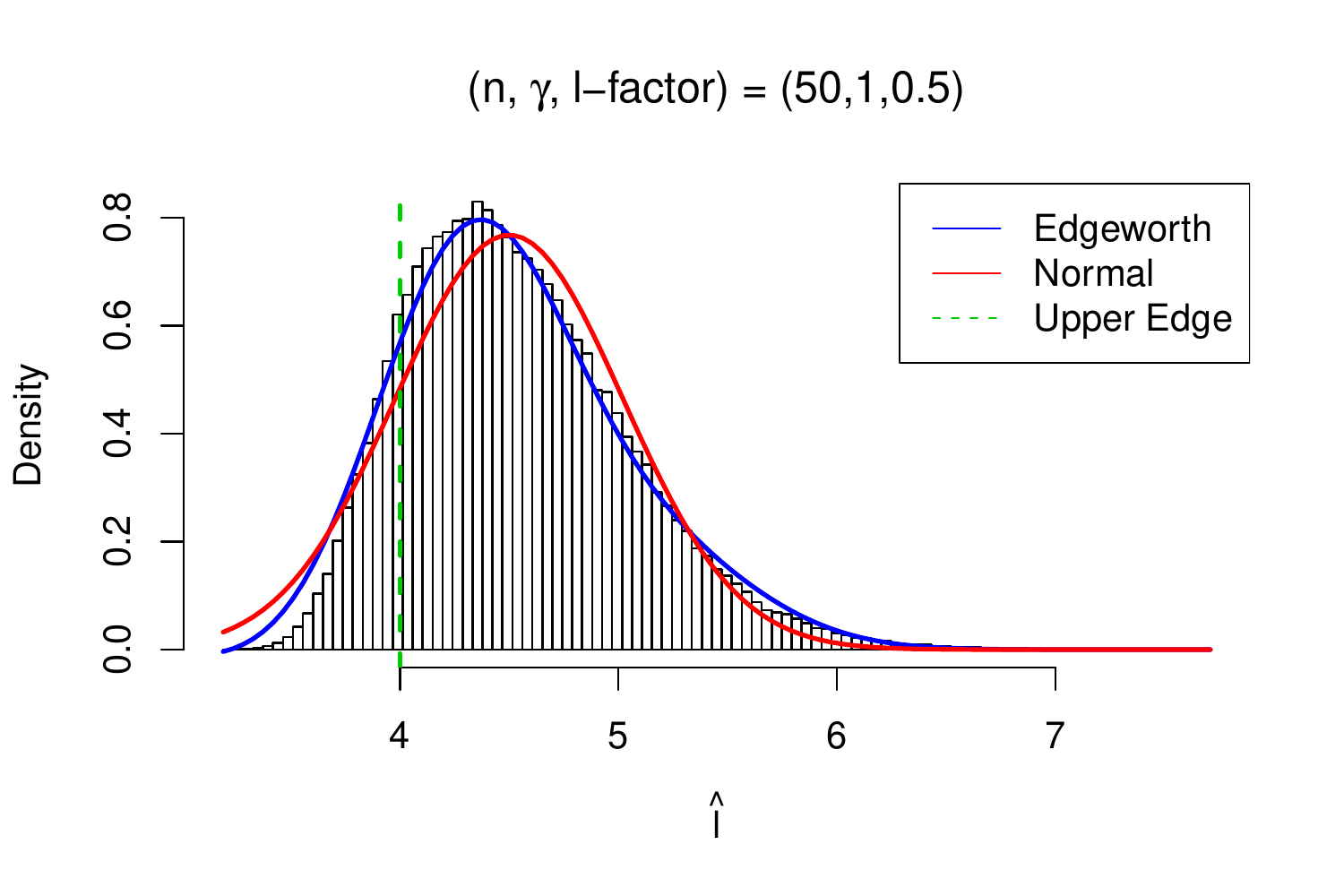}
	\end{minipage}
	\\
	\begin{minipage}[b]{0.49\linewidth}
		\centering
		\includegraphics[width = 80mm]{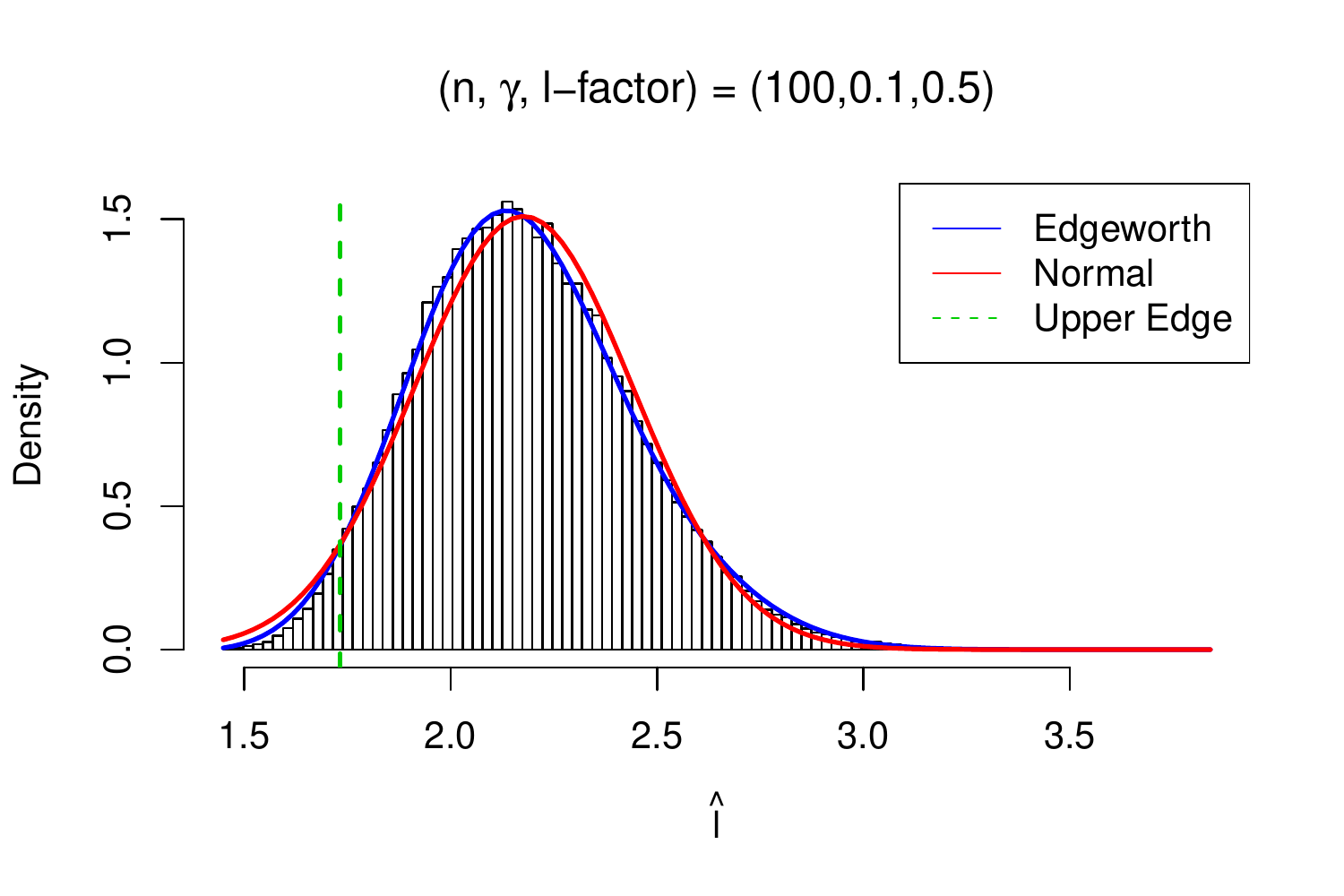}
	\end{minipage}
	\hfill	
	\begin{minipage}[b]{0.49\linewidth}
		\centering
		\includegraphics[width = 80mm]{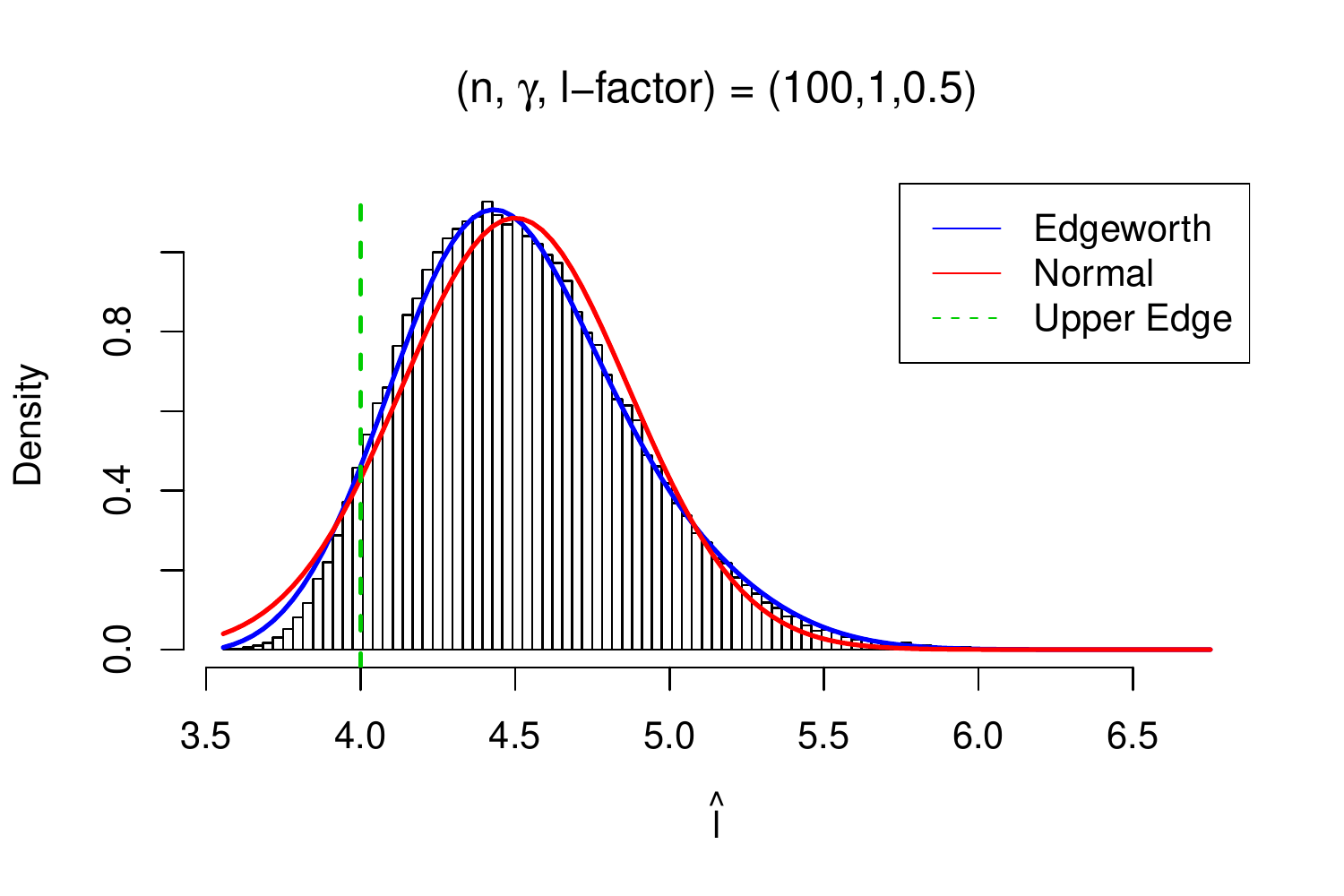}
	\end{minipage}
\end{figure}
in fact show the densities
$y \to \sqrt{n/\sigma_n} f_E(\sqrt{n/\sigma_n}(y-\rho_n))$ after
shifting and scaling to correspond to $\hat{\ell}$. 
Superimposed are the corresponding rescaled normal density as well as
histograms of $100,000$ simulated replicates of $\hat{\ell}$.
The green dashed lines show the upper bulk edge $(1 + \sqrt{\gamma_n})^2$ to emphasize that these settings for $\hat{\ell}$ are not
too far above the bulk.
In the cases shown, the Edgeworth correction provides a (right)
skewness correction that matches the simulated histograms reasonably
well, though unsurprisingly the small $n = 50$ and large $\gamma = 1$
case, has the least good match.

When $\ell$ is closer to the phase transition, so that the
$\ell$-factor is smaller, the skewness correction becomes
unsatisfactory due to the singularity in the denominator of $\alpha_2$
and $\alpha_0$ as $h$ approaches $\sqrt \gamma$. 
Empirically, we have found that the skewness
correction may be reasonable, with a single inflection point visible above
the mode, when
\begin{equation*}
\frac{1}{n}  (9/2) \alpha_2^2 = \frac{1}{n} \frac{(h^3+\gamma)^2}{(h^2-\gamma)^3}
\leq 0.2.
\end{equation*}

%%%%%%%%%%%%%%%%%%%%%%%%%%%%%%%%%%%%%%%%%%%%%%%%%
%%%%%%%%%%%%%%%%%%%%% Section 3 %%%%%%%%%%%%%%%%%%%%% 
%%%%%%%%%%%%%%%%%%%%%%%%%%%%%%%%%%%%%%%%%%%%%%%%%

\setcounter{section}{3} %***
\setcounter{subsection}{1} %***
\setcounter{equation}{0} %-1

\noindent {\bf 3. Proof}

\noindent {\bf 3.1. Outline}

We start with deriving the useful expression of $R_n$ as introduced in the first section with more details.
Without loss of generality, we may assume that the population covariance matrix of the distribution of $x_1, \cdots, x_n$ is $\text{diag}(\ell, 1, \cdots, 1)$(by an appropriate rotation, not changing $S$). 
Then, we write $X = [ \sqrt{\ell} Z_1 \ \ Z_2]$ where $Z_1, Z_2$ are $n \times 1$, $n \times p$ with i.i.d. standard normal elements, respectively.
The eigenvalue equation $S \hat{v} = \lh \hat{v}$ becomes
\begin{equation*}
\begin{pmatrix}
\ell Z_1'Z_1  &  \sqrt{\ell} Z_1'Z_2 \\
\sqrt{\ell} Z_2' Z_1 & Z_2'Z_2
\end{pmatrix}
\begin{pmatrix}
\hat{v}_1 \\ \hat{v}_2
\end{pmatrix}
= n \lh   \begin{pmatrix}
\hat{v}_1 \\ \hat{v}_2
\end{pmatrix},
\end{equation*}
where $\hat{v}_1, \hat{v}_2$ are the first coordinate and the rest of $\hat{v}$, respectively.
As usual, we substitute the second equation into the first, then
cancel $\hat{v}_1$ to obtain
\begin{equation*}
n \lh %= \ell [Z_1'Z_1  + Z_1'Z_2 ( n \lh I_p - Z_2'Z_2)^{-1} Z_2'Z_1] 
= \ell Z_1' [I_n + Z_2 (n\lh I_p -Z_2' Z_2 )^{-1}Z_2'] Z_1 
= \ell Z_1' [ \lh ( \lh I_n - n^{-1}Z_2 Z_2')^{-1} ] Z_1 
= \ell z'[- \lh R(\lh)]z,
\end{equation*}
whenever $\det(n \lh I_p - Z_2' Z_2) \neq 0$, i.e. almost surely.
Note that the second equation is a particular case of the Woodbury formula, $z = U'Z_1$ where $U$ is from the eigendecomposition $n^{-1} Z_2 Z_2' = U \Lambda U'$ as introduced before, and the resolvent $R(x) = (\Lambda - xI_n)^{-1}$ is defined for 
$x \notin \{ \lambda_1, \ldots, \lambda_n \}$. 
Now using the resolvent identity $R(x) = R(y) + (x - y) R(x) R(y)$
%\begin{equation} \label{resolvent_id}
%R(x) = R(y) + (x - y) R(x) R(y), 
%\quad I_n - \Lambda R(x) = - x R(x)
%\end{equation}
for $x, y \notin \{\lambda_1, \cdots, \lambda_n\}$, we obtain
\begin{align*}
	n \lh &= \ell z'[ - \rho_n R(\rho_n) - (\lh - \rho_n)\Lambda R(\lh) R(\rho_n)]z,
\end{align*}
which can be rearranged into a key equation 
\begin{equation} \label{key_eq}
(\lh - \rho_n) (1 + \ell n^{-1} z' \Lambda R(\lh) R(\rho_n) z)
= \ell \rho_n (- n^{-1} z' R(\rho_n) z - \ell^{-1})
\end{equation}
whenever $\lh, \rho_n \notin \{\lambda_1, \cdots, \lambda_n\}$ i.e. almost surely ; we assume this from now on. 
To investigate \eqref{key_eq} further, we will make frequent use of the \textit{stochastic decomposition}
\begin{equation} \label{stoc_decomp}
n^{-1} \sum_{i=1}^n f(\lambda_i) z_i^2 
= \ssf_{\gamma_n}(f) + n^{-1/2} S_n(f) + n^{-1} G_n(f).
\end{equation}
where $\ssf_n(\cdot), S_n(\cdot)$ and $G_n(\cdot)$ are defined as above,
%\begin{align*}
% \ssf_{\gamma_n}(h) & = \int h(\lambda) \ssf_{\gamma_n}(d \lambda), \\ % defined above
%S_n(f) 
%& = n^{-1/2} \sum_{i=1}^n f(\lambda_i) (z_i^2 -1), \\
%G_n(f)
%& = n ( \ssf_n(f)  - \ssf_{\gamma_n}(f) ) = p(F_n(f) - F_{\gamma_n}(f)).
%\end{align*}
which are of order $O_p(1)$ as we will see in the proof section.
Noting that $ -R(\rho_n) = \text{diag} ( g_n(\lambda_1), \cdots,  g_n(\lambda_n))$ and $\ssf_{\gamma_n}( g_n) = \ell^{-1}$ \eqref{F_g_n}, we have
$- n^{-1} z' R(\rho_n) z
%= \ssf_{\gamma_n}(g_n) + n^{-1/2} S_n(g_n) + n^{-1} G_n(g_n)
=\ell^{-1} + n^{-1/2} S_n(g_n) + n^{-1} G_n(g_n)
$ 
from \eqref{stoc_decomp}. 
Hence we can rewrite \eqref{key_eq} as
\begin{equation}\label{key_eq_2}
(\lh - \rho_n) (1 + \ell n^{-1} z' \Lambda R(\lh) R(\rho_n) z) = 
n^{-1/2} \ell \rho_n (S_n(g_n) + n^{-1/2} G_n(g_n)).
\end{equation}
Also, use the resolvent identity to write
\begin{equation} \label{key_eq_denom}
1 + \ell n^{-1} z' \Lambda R(\lh) R(\rho_n) z 
 = 1 + \ell n^{-1} z' \Lambda R^2(\rho_n) z - \ell \nu_n,
\end{equation}
where
\begin{equation} \label{nu_n}
\nu_n = -(\lh - \rho_n) n^{-1} z' \Lambda R(\lh) R^2(\rho_n) z
\end{equation}
will be $O_p(n^{-1/2})$ by \eqref{key_eq_2} and tail bounds. 
One can use \eqref{stoc_decomp} to write the leading term as
\begin{equation} \label{key_eq_denom_leading}
	1 + \ell n^{-1} z' \Lambda R^2(\rho_n) z 
	= \ell \rho_n \ssf_{\gamma_n}(g_n^2) + n^{-1/2}\ell S_n(m_1 g_n^2) + n^{-1}\ell G_n(m_1 g_n^2)
\end{equation}
where $m_k(\lambda) := \lambda^k, k\in\NN $ are monomials, 
since $1 + \ell \ssf_{\gamma_n}(m_1 g_n^2) - \ell \rho_n \ssf_{\gamma_n}(g_n^2) = 1 - \ell \ssf_{\gamma_n} (g_n) = 0$ again by \eqref{F_g_n}.
This allows us to rewrite \eqref{key_eq_2} as 
\[
n^{1/2}(\lh - \rho_n) = \frac{S_n(g_n) + O_p(n^{-1/2})}{\ssf_{\gamma_n}(g_n^2) + O_p(n^{-1/2})} 
\]
which establishes \eqref{eq:onestep}. 
To expand $\nu_n$ further, we insert \eqref{eq:onestep} into \eqref{nu_n}, yielding
\begin{equation} \label{nu_n_2}
  \begin{split}
\nu_n & = n^{-1/2}( S_n(g_n) / \ssf_{\gamma_n}(g_n^2) + O_p(n^{-1/2}))
(\ssf_{\gamma_n}(m_1 g_n^3) + O_p(n^{-1/2}) ) \\
      & = n^{-1/2} r_n S_n(g_n) + O_p(n^{-1}),
  \end{split}
\end{equation}
where 
\begin{equation}
r_n 
= \ell \rho_n \ssf_{\gamma_n}(m_1 g_n^3) / (1 + \ell \ssf_{\gamma_n}(m_1 g_n^2)) 
= \ssf_{\gamma_n}(m_1 g_n^3) / \ssf_{\gamma_n}( g_n^2).  \label{r_n}
\end{equation}
Putting \eqref{key_eq_denom_leading}, \eqref{nu_n_2} and $\ssf_{\gamma_n}(g_n^2) = 2 \sigma_n^{-2}$ \eqref{F_g_n_2} into \eqref{key_eq_denom} gives
\begin{align} \label{key_eq_denom_2}
1 + \ell n^{-1} z' \Lambda R(\lh) R(\rho_n) z 
& = \ell ( 2 \rho_n \sigma_n^{-2} + n^{-1/2} S_n( m_1 g_n^2 - r_n g_n ) + \delta_n)
\end{align}
where 
\begin{equation}
	\delta_n = n^{-1} G_n(m_1 g_n^2) - (\nu_n - n^{-1/2} r_n S_n( g_n))
	\label{delta_n_nu_n}
\end{equation}
is $O_p(n^{-1})$ ignorable ; a rigorous proof of this fact is postponed to the delta method section.
% (because $\ssf_{\gamma_n}((\rho_n - \lambda) g_n^2) = \ssf_{\gamma_n}( g_n) = \ell^{-1}, \ssf_{\gamma_n}(g_n^2) = 2 \sigma_n^{-2}$) 

\iffalse
Note that $\delta_n$ is intentionally defined to be ignorable $O_p(n^{-1})$ for the first order Edgeworth expansion ; the main intuition is that, from \eqref{key_eq2}, $(\lh - \rho_n)$ and $n^{-1} z' \Lambda R(\lh) R^2(\rho_n) z$ are approximately $n^{-1/2} \ell \rho_n S_n(g_n) / (1 + \ell \ssf_{\gamma_n}(\lambda g_n^2))$ and $\ssf_{\gamma_n}(\lambda g_n^3)$.
\fi

All in all, combining \eqref{key_eq_2} and \eqref{key_eq_denom_2}, we obtain the master equation 
\begin{equation} \label{R_n_exp}
n^{1/2} (\lh - \rho_n)
% = \frac{-n^{-1} z'R(\rho_n)z - l^{-1}}{n^{-1} z' R(\lh) R(\rho_n) z}
= \frac{ \rho_n ( S_n(g_n) + n^{-1/2} G_n(g_n) )}{ 2 \rho_n \sigma_n^{-2}  - n^{-1/2} S_n( g_n h_n) + \delta_n },
\quad \text{with} \quad 
h_n = r_n - m_1 g_n.
%h_n(\lambda) = r_n - \lambda g_n(\lambda).
\end{equation}

Now we are ready to see the outline of the main proof.
For notational convenience, let 
$ \eta(\ell, \gamma) := \rho(\ell, \gamma) - b(\gamma) = (\ell-1)^{-1} (\ell-1-\sqrt{\gamma})^2 > 0$.

%%%%%%%%%%%%%%%%% OUTLINE %%%%%%%%%%%%%%%%%
\begin{enumerate} [label=\textbf{{Step}{ \arabic*}}]

\item \label{step1} From \textit{tail bounds}, show that for any fixed $\delta \in (0, \min (1, \eta(\ell, \gamma)/4, \gamma / 2) )$, the event
\begin{equation} \label{E_0n}
E_{0,n} = \{ \lambda_1 + \delta < \min\{\rho(\ell, \gamma), \rho_n, \lh\}, \ssf_n(m_2) - \ssf_n(m_1)^2 > \gamma^2 / 8\}
\end{equation}
 is of probability  $1 - O(\exp(-c n^{1/2}))$ for a positive $c$ depending only on $\gamma, \ell, \delta$.
Therefore, $\PP{R_n \leq x} - \PP{E_{0,n} \cap \{ R_n \leq x\} } = O(\exp(-c n^{1/2}))$ uniformly in $x \in \RR$, i.e. it suffices to do the analysis on $E_{0,n}$. 
Then, for notational convenience, let $\EE[n]{X} := \EE{I(E_{0,n})X}$ and $\PP[n]{E} := \PP{E_{0,n} \cap E}$ for any random variable $X$ and event $E$.

\item \label{step2} Using \eqref{R_n_exp}, \textit{linearize} the event $ \{ R_n \leq x \}$ as
\begin{align} 
\{R_n \leq x \} 
&= \{\rho_n  ( S_n(g_n) + n^{-1/2} G_n(g_n) ) 
\leq  (2 \rho_n \sigma_n^{-2}  - n^{-1/2} S_n(g_n h_n) + \delta_n) \sigma_n x\} \nonumber \\
&= \{M_n -  \delta_n x_n \leq 2 \sigma_n^{-1} x \} \label{linearization}
\end{align}
where $x_n = \rho_n^{-1} \sigma_n x$ and $M_n$, the main linearized statistic, is defined as
\begin{align} \label{M_n}
M_n &:= S_n( ( 1  + n^{-1/2}  x_n h_n) g_n ) + n^{-1/2}  G_n(g_n).
\end{align}

\item \label{step3} Use the \textit{Edgeworth expansion for sums of independent random variables} to expand $\PP{M_n \leq 2 \sigma_n^{-1} x \mid \Lambda }$ on $E_{0,n}$ up to the accuracy of $o(n^{-1/2})$ uniformly in $x\in\RR$.
Then take its expectation over $\Lambda$ to obtain the corresponding expansion of $\PP[n]{M_n \leq 2 \sigma_n^{-1} x}$.

\item \label{step4} Apply the \textit{delta method for Edgeworth expansion} to obtain
\begin{align} \label{apply_delta_method}
\PP[n]{ R_n \leq x } 
&= \PP[n]{ M_n \leq 2 \sigma_n^{-1} x } + o(n^{-1/2})
\end{align}
uniformly on $x \in \RR$.

\end{enumerate}

\noindent {\bf 3.2. Bai-Silverstein CLT}

As a core component of our analysis, a particular case of the CLT for linear spectral statistics from \cite{bai2004clt} is introduced.

%%%%%%%%%%%%%%%%%%%  CLT %%%%%%%%%%%%%%%%%%
\begin{theorem} \label{B_S_CLT}
Suppose that $Z_n:= [z_1 \cdots z_n]$ with $z_1, \cdots, z_n \overset{i.i.d.}{\sim} N(0, I_p)$ and $\gamma_n := p/n \rightarrow \gamma \in \RR^+$ as $n \rightarrow \infty$.
As defined above, let $F_n(x)$ and $F_{\gamma_n}(x)$ be the empirical spectral distribution of $Z_n Z_n^{t}/p$ and the Marchenko-Pastur distribution with the parameter $\gamma_n$ respectively, and 
$G_n(x) := p(F_n(x) - F_{\gamma_n}(x))$.
Then, for any real function $f$ analytic on an open interval containing $I(\gamma) := [I(\gamma \in (0,1))a(\gamma), b(\gamma)]$,
\begin{align*}
G_n(f) \overset{d}{\rightarrow} N(\mu(f), \sigma^2(f)),
\end{align*}
where $\mu(f)$ and $\sigma^2(f)$ are finite values determined by $\{ f(x) \mid x\in I(\gamma) \}$.
In particular, $\mu(f)$ is given by ((5.13) of \cite{bai2004clt})
\[
\mu(f) = \frac{f(a(\gamma)) + f(b(\gamma))}{4} - \frac{1}{2\pi}\int_{a(\gamma)}^{b(\gamma)} \frac{f(x)}{\sqrt{4\gamma - (x - 1 -\gamma)^2}} dx. 
\]
\end{theorem}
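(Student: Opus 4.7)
The plan is to reduce the CLT for linear spectral statistics $G_n(f)$ to a process-level CLT for the centered Stieltjes transform, and then analyse that transform via a martingale decomposition. Since $f$ is analytic on an open neighborhood of $I(\gamma)$, Cauchy's formula lets us choose a simple closed contour $\mathcal{C}$ enclosing $I(\gamma)$ inside the region of analyticity and write
\[
G_n(f) = -\frac{1}{2\pi i} \oint_{\mathcal{C}} f(z)\, M_n(z)\, dz, \qquad M_n(z) := p\bigl(m_n(z) - m_{\gamma_n}(z)\bigr),
\]
where $m_n, m_{\gamma_n}$ are the Stieltjes transforms of $F_n$ and $F_{\gamma_n}$. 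A bulk-edge concentration argument shows that with probability $1 - o(1)$ no eigenvalue of $Z_n Z_n^{T}/p$ escapes the interior of $\mathcal{C}$, so the contour representation is valid. It then suffices to establish joint convergence of $z \mapsto M_n(z)$ on $\mathcal{C}$ to a Gaussian process with mean $b(z)$ and covariance kernel $K(z,z')$ and to pass to the contour integrals.

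For the process CLT, I would introduce the filtration $\mathcal{F}_k = \sigma(z_1,\dots,z_k)$ and decompose
\[
M_n(z) - \mathbb{E} M_n(z) = \sum_{k=1}^{n} (\mathbb{E}_k - \mathbb{E}_{k-1}) \operatorname{tr} \bigl(Z_n Z_n^{T}/p - zI\bigr)^{-1},
\]
which is a sum of martingale differences. The Sherman--Morrison rank-one update formula expresses each increment as a rational function of $z_k^{T} G_k(z) z_k$ and $\operatorname{tr} G_k(z)$, where $G_k(z)$ is the resolvent of the leave-one-column-out matrix. Hanson--Wright concentration for these Gaussian quadratic forms bounds the conditional second moments; the martingale CLT (McLeish) then delivers finite-dimensional Gaussian convergence, and $L^2$ modulus-of-continuity bounds in $z$ give tightness in $C(\mathcal{C})$.

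The limiting bias $b(z)$ and covariance $K(z,z')$ are identified by deterministic-equivalent computations. Starting from the Marchenko--Pastur fixed-point equation for $m_{\gamma_n}$, one Taylor expands the perturbation caused by replacing one column of $Z_n$ by its expectation, isolating the $O(p^{-1})$ correction to $\mathbb{E} m_n(z)$; summing these contributions and matching against the resolvent of the leave-one-out matrix yields $b(z)$. The covariance kernel is obtained by squaring and summing the martingale increments and recognising a rational expression in the limiting Stieltjes transform. Integrating both against $f$ along $\mathcal{C}$ defines $\sigma^2(f)$ and $\mu(f)$; pushing $\mathcal{C}$ onto the cut $[a(\gamma), b(\gamma)]$ and computing the jump across it produces the explicit formula
\[
\mu(f) = \frac{f(a(\gamma)) + f(b(\gamma))}{4} - \frac{1}{2\pi}\int_{a(\gamma)}^{b(\gamma)} \frac{f(x)}{\sqrt{4\gamma - (x - 1 - \gamma)^2}}\, dx.
\]

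The main obstacle is the bias identification: the CLT skeleton (contour reduction, martingale differences, Hanson--Wright) is standard, but extracting the precise $O(1)$ limit of $\mathbb{E} M_n(z)$ uniformly in $z \in \mathcal{C}$ requires careful second-order perturbation of the MP equation together with estimates showing that several remainder terms are $o(1)$. Two features of the present real Gaussian setting streamline this relative to the general Bai--Silverstein (2004) argument: rotational invariance of $z_i$ eliminates all truncation steps, and the vanishing of the fourth cumulant of $N(0,1)$ (relative to $2\sigma^4$) kills the $\kappa_4$-dependent term present in the general formula, leaving the clean expression above.
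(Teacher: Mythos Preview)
Your outline is a faithful sketch of the Bai--Silverstein (2004) argument, and the ingredients you list (Cauchy contour reduction to the Stieltjes transform process $M_n(z)$, Sherman--Morrison leave-one-out martingale differences, quadratic-form concentration, martingale CLT plus tightness on $\mathcal{C}$, second-order expansion of the MP fixed point for the bias) are indeed the right ones. Your remarks on the Gaussian simplifications are also correct: rotational invariance removes truncation and the vanishing excess kurtosis kills the extra mean term.

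However, the paper does not prove this theorem at all. It is stated explicitly as ``a particular case of the CLT for linear spectral statistics from \cite{bai2004clt}'' and is simply quoted as an input to the main argument; the formula for $\mu(f)$ is lifted directly from (5.13) of that reference. So there is nothing to compare your proof against: the paper treats Theorem~\ref{B_S_CLT} as a black box. What the paper \emph{does} prove in the supplement is the specialization $\mu(g)$ for $g(\lambda)=(\rho-\lambda)^{-1}$, by the substitution $x = 1+\gamma+2\sqrt{\gamma}\cos\theta$ and a residue computation on $|z|=1$, together with a $\gamma \leftrightarrow \gamma^{-1}$ duality to cover $\gamma>1$; and separately Lemma~\ref{G_n_lemma}, which transfers the CLT from a fixed $f$ to an $n$-dependent $f_n$ by controlling $\int (f_n-f)\hat{M}_n$ via uniform convergence of $f_n$ and tightness of $\hat{M}_n$. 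If your goal was to match the paper, you should simply cite \cite{bai2004clt}; if your goal was an independent proof, your plan is sound but you are re-deriving a known theorem rather than anything original to this paper.
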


It is clear that Bai-Silverstein CLT is applicable for $g(\lambda) :=  (\rho(\ell, \gamma) - \lambda)^{-1}$, because $ \rho(\ell, \gamma) - b(\gamma) = \eta(\ell, \gamma) > 0$. 
%In addition, as introduced in theorem \ref{mainThm}, the corresponding mean of the limiting distribution is given by $ \mu(g) =  \gamma (l-1) ((l-1)^2 - \gamma)^{-2}$ from Cauchy integral formula(details in the appendix).

\noindent {\bf 3.3. Tail bounds}

We introduce tail bounds in this section in order to establish \ref{step1}, i.e. to separate $ \lambda_1 $ from $\min\{\rho(\ell, \gamma), \rho_n, \lh\}$, and $\ssf_n(m_2)$ from $\ssf_n(m_1)^2$, with overwhelming probability.
All proofs are postponed to the section \textbf{S2}.

We start with $\lambda_1 $  and $\min\{\rho(\ell, \gamma), \rho_n \}$.
Note that $ \min\{\rho(\ell, \gamma), \rho_n \} - b(\gamma) > \delta$ for some positive $\delta$ and all large enough $n$, so the following proposition is sufficient.
\begin{proposition} 
[Proposition 1 of \cite{paul2007asymptotics}] \label{paulProposition}
%Theorem II.13 of Davidson and Szarek(2001)]
For each $\delta \in (0, b(\gamma)/2)$,
the event $ E_{1,n} := \{  \lambda_1 > b(\gamma) + \delta \}$ satisfies
$$ \mathbb{P}(E_{1,n}) \leq \exp (- 3 n \delta^2 / ( 64b(\gamma) )) $$
for all $n > n_{\delta}$, where $n_{\delta} \in \NN$ is determined by $\delta$ and $\{\gamma_n \}_{n\in \NN}$.
\end{proposition}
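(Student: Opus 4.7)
The plan is to reduce the bound on $\lambda_1$ to a standard Gaussian concentration estimate on the operator norm $\|Z_2\|_{\mathrm{op}}$. Since $Z_2$ is $n\times p$ with i.i.d. standard Gaussian entries and $\lambda_1 = n^{-1}\|Z_2\|_{\mathrm{op}}^2$, the event $E_{1,n}$ is identical to $\{\|Z_2\|_{\mathrm{op}} > \sqrt{n(b(\gamma)+\delta)}\}$, so it suffices to control the upper tail of $\|Z_2\|_{\mathrm{op}}$ at this threshold.

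I would then invoke two classical facts about Gaussian matrices. First, the Davidson-Szarek / Gordon inequality gives the dimension-exact mean bound $\E\|Z_2\|_{\mathrm{op}} \leq \sqrt{n}+\sqrt{p}$. Second, viewed as a function of the $np$ Gaussian entries, $\|Z_2\|_{\mathrm{op}}$ is $1$-Lipschitz in the Frobenius norm, so the Borell-TIS inequality yields
\[
\mP\bigl(\|Z_2\|_{\mathrm{op}} \geq \E\|Z_2\|_{\mathrm{op}} + t\bigr) \leq e^{-t^2/2}, \qquad t>0.
\]
Chaining these produces $\mP(E_{1,n}) \leq \exp(-t_n^2/2)$ with $t_n = \sqrt{n(b(\gamma)+\delta)} - (\sqrt{n}+\sqrt{p})$, once $t_n$ is positive. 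Writing $\sqrt{n}+\sqrt{p} = \sqrt{n}\sqrt{b(\gamma_n)}$ and combining the elementary bound $\sqrt{b+\delta}-\sqrt{b} \geq \delta/(2\sqrt{b+\delta})$ with the hypothesis $\delta < b(\gamma)/2$ lower-bounds $t_n$ by a constant multiple of $\sqrt{n}\,\delta/\sqrt{b(\gamma)}$, hence an exponent of order $n\delta^2/b(\gamma)$ as claimed.

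The main obstacle is the bookkeeping of constants, which boils down to handling the $\gamma_n-\gamma$ discrepancy: the mean bound features $\sqrt{b(\gamma_n)}$ rather than $\sqrt{b(\gamma)}$, so one must pick $n_\delta$ so that $|\sqrt{b(\gamma_n)}-\sqrt{b(\gamma)}|$ is a small fraction of the gap $\sqrt{b(\gamma)+\delta}-\sqrt{b(\gamma)}$. Once this is arranged, a careful but routine calculation absorbs the correction into the prefactor and yields the specific constant $3/(64\,b(\gamma))$ in the exponent, reproducing the statement as given in \citet{paul2007asymptotics}.
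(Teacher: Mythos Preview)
Your proposal is correct and follows the standard approach. Note, however, that the paper does not actually supply its own proof of this proposition: it is stated as ``Proposition 1 of \citet{paul2007asymptotics}'' and used as a black box, with the supplementary section S2 proving only the other tail bounds (Propositions \ref{TailBound_Indep}, \ref{TailBound_LSS}, Lemma \ref{G_n_lemma}, Corollary \ref{Cor_limiting_moment}). Your argument via the Davidson--Szarek mean bound $\E\|Z_2\|_{\mathrm{op}}\le \sqrt{n}+\sqrt{p}$ combined with Gaussian concentration for the $1$-Lipschitz map $Z_2\mapsto\|Z_2\|_{\mathrm{op}}$ is precisely the route taken in \citet{paul2007asymptotics}, so there is nothing to compare beyond confirming that your sketch reproduces that reference's proof.
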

Now assume $\delta \in (0, \min ( \eta(\ell, \gamma) / 3, b(\gamma)/2 ) )$ and choose $n_0(\delta) \in \NN$ such that $|\rho_n - \rho(\ell, \gamma)| < \delta$ for all $n > n_0(\delta)$.
Then, on $E_{1, n}^c$
\[
\lambda_1 + \delta \leq b(\gamma)  + 2 \delta < \rho(\ell, \gamma) - \delta < \min\{\rho(\ell, \gamma), \rho_n\}
\]
for all $n > n_0(\delta)$, as desired.

The next 2 propositions are to restrict $| \lh - \rho_n |$ on $E_{1, n}^c$, resulting in separation between $ \lambda_1 $ and  $\min\{\rho(\ell, \gamma), \rho_n, \lh \}$.
Observe that 
\[
\lh = \sup_{v \in \mathbb{S}^{p-1}} \|S v\|_2 
> \sup_{w \in \mathbb{S}^{p-2}} \|S^{[2:(p+1),2:(p+1)]} w\|_2 = \lambda_1
\]
whenever $\hat{v}_1 \neq 0$, hence 
$ z' \Lambda R(\lh) R(\rho_n) z \geq 0$
almost surely on $E_{1,n}^c$.
This leads to
\begin{equation} \label{WT_l_hat_ineq}
|l \rho_n (S_n(g_n) + n^{-1/2} G_n(g_n))| = (1+l n^{-1} z' \Lambda R(\lh) R(\rho_n) z) | n^{1/2} (\lh - \rho_n)| \geq |n^{1/2} (\lh - \rho_n)|
\end{equation}
almost surely on $E_{1,n}^c$, from \eqref{key_eq_2}.
Therefore, it suffices to find tail bounds for $S_n(g_n)$ and $G_n(g_n)$ on $E_{1, n}^c$.
We introduce propositions for more general settings, which will be necessary in the delta method for Edgeworth expansion section.

\begin{proposition} \label{TailBound_Indep}
For $M > 0$ and a function $f$ absolutely bounded by $U_f$ on $[0, b(\gamma) + \delta]$,
$E_{2,n}(f, M) := \{|S_n(f)| > M\}$ satisfies $$\mathbb{P}(E_{1,n}^c \cap E_{2,n}(f, M)) \leq 15\exp (-M/ U_f).$$
\end{proposition}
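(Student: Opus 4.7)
The plan is to condition on $\Lambda$ and exploit the fact that, on $E_{1,n}^c$, the weights $f(\lambda_i)$ are uniformly bounded by $U_f$, so $S_n(f) = n^{-1/2} \sum_i f(\lambda_i)(z_i^2 - 1)$ becomes a weighted sum of i.i.d. centered $\chi^2_{(1)}$ variates with small, controlled coefficients, to which a Chernoff/Bernstein bound applies cleanly.

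First I would observe that $E_{1,n}^c$ is $\sigma(\Lambda)$-measurable, so it suffices to show that for every realization of $\Lambda$ with $\lambda_1 \le b(\gamma) + \delta$,
\begin{equation*}
\mathbb{P}(|S_n(f)| > M \mid \Lambda) \le 15 \exp(-M/U_f),
\end{equation*}
and then integrate against $I(E_{1,n}^c)$. On such realizations the coefficients $a_i := f(\lambda_i)/\sqrt{n}$ satisfy $|a_i| \le U_f/\sqrt{n}$ and $\sum_i a_i^2 \le U_f^2$.

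Second, I would apply Chernoff. Recalling
\begin{equation*}
\log \mathbb{E}\bigl[e^{s(z_i^2-1)}\bigr] = -s - \tfrac{1}{2}\log(1-2s) \le \frac{s^2}{1-2|s|}, \qquad |s|<1/2,
\end{equation*}
one has, for any $t$ with $2t U_f/\sqrt n < 1$,
\begin{equation*}
\log \mathbb{E}\bigl[e^{t S_n(f)} \mid \Lambda\bigr] \;=\; \sum_i \Bigl(-t a_i - \tfrac{1}{2}\log(1-2t a_i)\Bigr) \;\le\; \frac{t^2 \sum_i a_i^2}{1 - 2t\,\max_i|a_i|} \;\le\; \frac{t^2 U_f^2}{1 - 2 t U_f/\sqrt{n}}.
\end{equation*}
Taking $t = 1/U_f$, the denominator is $\ge 1/2$ for $n \ge 16$, so the MGF is bounded by a universal constant $e^C$. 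Markov's inequality then gives $\mathbb{P}(S_n(f) > M \mid \Lambda) \le e^{C} e^{-M/U_f}$, and the same argument with $t = -1/U_f$ controls the lower tail. Combining and absorbing the small finite-$n$ regime into the constant yields the two-sided bound with factor $15$. Taking expectation over $\Lambda$ on $E_{1,n}^c$ finishes the proof.

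The step with the most room for error is the bookkeeping that produces the constant $15$: one has to check the range of validity of the Chernoff bound (the condition $2t U_f/\sqrt n < 1$), handle both tails symmetrically, and separately dispose of small $n$ where the MGF bound is vacuous by using a trivial bound such as $\mathbb{P}(|S_n(f)|>M) \le 1 \le 15 e^{-M/U_f}$ when $M \le U_f \log 15$. A minor subtlety is that $f$ is only assumed bounded on $[0,b(\gamma)+\delta]$; this is harmless because we only use $|f(\lambda_i)|\le U_f$ on the event $E_{1,n}^c$, and one can equivalently argue via the truncated function $\tilde f = f \cdot I_{[0,b(\gamma)+\delta]}$, which agrees with $f$ on $E_{1,n}^c$ and is globally bounded by $U_f$.
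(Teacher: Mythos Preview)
Your proposal is correct and follows essentially the same approach as the paper: condition on $\Lambda$, use that on $E_{1,n}^c$ the coefficients $f(\lambda_i)/(U_f\sqrt n)$ lie in $[-1/4,1/4]$ (for $n\ge 16$), apply a Chernoff/Markov bound to the moment generating function of $S_n(f)/U_f$, and collect the two tails to get $2e^2<15$. The only cosmetic difference is that the paper uses the cleaner sub-Gaussian bound $\log\mathbb{E}[e^{\theta(z^2-1)}]\le 2\theta^2$ on $|\theta|\le 1/4$ (proved directly from the cumulant series) in place of your Bernstein-type bound $s^2/(1-2|s|)$; both yield the same constant, and the paper likewise implicitly relies on $n\ge 16$ without separately treating small $n$.
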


\begin{proposition} \label{TailBound_LSS}
For functions $\{f_n\}_{n\in\NN}$ such that (i) $f_n(x^2), n\in\NN$ share a Lipschitz constant $L$ on $[0, (b(\gamma) + \delta)^{1/2} ]$ (as functions of $x$) and (ii) $\{G_n(f_n)\}_{n\in\NN}$ is uniformly tight, then
\begin{align}
M(\{f_n\}_{n\in\NN}) := \sup_{n\in\NN} | \EE{ G_n( \textsf{f}_n) } | \quad \text{with} \quad \textsf{f}_n(\lambda): = f_n( (\lambda \vee 0) \wedge (b(\gamma) + \delta) )  
\end{align}
is finite. Furthermore, for $M > 2 M(\{f_n\}_{n\in\NN})$,
$E_{3,n}(f_n, M) := \{|G_n(f_n)| > M\}$ satisfies
$$ \mathbb{P} (E_{1,n}^c \cap E_{3,n}(f_n, M) ) \leq 2 \exp (-M^2 / (8 L^2))).$$
%for all $n \in \NN$.
\end{proposition}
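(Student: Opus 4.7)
The plan is to transfer the problem to the event $E_{1,n}^c$, recognize the truncated linear spectral statistic as a Lipschitz function of the Gaussian matrix $Z_2$, and then combine Gaussian concentration with the tightness hypothesis. First I would observe that on $E_{1,n}^c$ every $\lambda_i$ lies in $[0, b(\gamma) + \delta]$, and for $n$ sufficiently large that the support of $\ssf_{\gamma_n}$ is contained in $[0, b(\gamma) + \delta]$, $f_n$ agrees with $\textsf{f}_n$ on both the empirical and the limiting spectral measure. Hence $G_n(f_n) = G_n(\textsf{f}_n)$ on $E_{1,n}^c$, and it suffices to produce a Gaussian tail bound for $G_n(\textsf{f}_n)$ without intersection.

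Next I would introduce $\Phi(Z) := \sum_{i=1}^n \textsf{f}_n(\lambda_i(n^{-1} Z Z'))$ for $n \times p$ matrices $Z$ and show that $\Phi$ is globally $L$-Lipschitz in the Frobenius norm. The truncation defining $\textsf{f}_n$ leaves $x \mapsto \textsf{f}_n(x^2)$ globally $L$-Lipschitz on $[0, \infty)$, since outside $[0, \sqrt{b(\gamma) + \delta}\,]$ the map is constant. Writing $\lambda_i(n^{-1} Z Z') = s_i^2(n^{-1/2} Z)$ and applying Cauchy-Schwarz over the at most $\min(n,p)$ nonzero singular values followed by Hoffman-Wielandt,
\[
|\Phi(Z) - \Phi(Z')| \leq L \sum_{i=1}^{\min(n,p)} |s_i(n^{-1/2} Z) - s_i(n^{-1/2} Z')| \leq L \sqrt{n} \cdot n^{-1/2} \|Z - Z'\|_F = L \|Z - Z'\|_F.
\]
Viewing $Z_2$ as a standard Gaussian vector on $\mR^{np}$, and noting that $G_n(\textsf{f}_n)$ differs from $\Phi(Z_2)$ only by the deterministic constant $n \ssf_{\gamma_n}(\textsf{f}_n)$, the Tsirelson-Ibragimov-Sudakov inequality yields
\[
\mP(|G_n(\textsf{f}_n) - \E[G_n(\textsf{f}_n)]| > t) \leq 2 \exp(-t^2 / (2 L^2)), \qquad t > 0.
\]

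To finish, I would establish finiteness of $M(\{f_n\}_{n \in \NN})$ via the tightness hypothesis. The reduction above together with $\mP(E_{1,n}) \to 0$ shows that $\{G_n(\textsf{f}_n)\}$ is itself tight, so some $K$ satisfies $\mP(|G_n(\textsf{f}_n)| \leq K) \geq 3/4$ for all large $n$. Combined with $\mP(|G_n(\textsf{f}_n) - \E[G_n(\textsf{f}_n)]| \leq L \sqrt{2 \log 8}) \geq 3/4$ from the concentration bound, both events must coexist for some sample point, forcing $|\E[G_n(\textsf{f}_n)]| \leq K + L \sqrt{2 \log 8}$ uniformly in large $n$; the finitely many small $n$'s have bounded mean because $\textsf{f}_n$ is bounded. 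Once this is in hand, for any $M > 2 M(\{f_n\}_{n \in \NN})$ the event $\{|G_n(\textsf{f}_n)| > M\}$ is contained in $\{|G_n(\textsf{f}_n) - \E[G_n(\textsf{f}_n)]| > M/2\}$, and the identity $G_n(f_n) = G_n(\textsf{f}_n)$ on $E_{1,n}^c$ delivers the claimed $2 \exp(-M^2/(8L^2))$.

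The main obstacle I anticipate is the Lipschitz computation: one must verify that the truncation globally extends the Lipschitz property of $f_n(x^2)$ from the bounded interval to all of $[0, \infty)$, and that the $\sqrt{n}$ factor picked up from Cauchy-Schwarz over the $\min(n,p) \leq n$ singular values is exactly cancelled by the rescaling $Z \mapsto n^{-1/2} Z$, so that the Lipschitz constant of $\Phi$ carries no $n$-dependence. Once this $n$-free Lipschitz constant is in place, the remaining steps are routine combinations of Gaussian concentration and tightness.
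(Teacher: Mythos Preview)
Your proposal is correct and follows essentially the same route as the paper: the paper invokes Corollary~1.8(b) of Guionnet--Zeitouni for the concentration step, which is precisely the Hoffman--Wielandt plus Gaussian Lipschitz concentration argument you spell out inline, and then deduces finiteness of $M(\{f_n\})$ from tightness and the final tail bound exactly as you do. The only cosmetic difference is that the paper phrases the mean bound via uniform $L^p$-boundedness of $p(F_n(\textsf{f}_n)-\E F_n(\textsf{f}_n))$ rather than your overlapping-events argument.
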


\Cref{TailBound_Indep} immediately follows from the Markov inequality for moment generating functions, while \Cref{TailBound_LSS} is mainly based on Corollary 1.8 (b) of \cite{guionnet2000concentration}.

To apply \Cref{TailBound_LSS}, assumptions (i) and (ii) need to be established for all sufficiently large $n$ ; 
(i) is true when $f'_n$ exists and is uniformly bounded on $[0, b(\gamma) + \delta]$ because $( f_n(x^2))' = 2x f'_n(x^2)$.
For (ii), the following lemma provides a sufficient condition.
\begin{lemma} \label{G_n_lemma}
% Let $I = [\liminf_{n} \lambda^{T_n}_{\min}I_{(0,1)}(\gamma) a(\gamma), \limsup_{n} \lambda^{T_n}_{\max} b(\gamma)]$ be the interval in \cite{bai2004clt} (1.4). 
In the setting of \Cref{B_S_CLT}, suppose there is an open neighborhood $\Omega \subset \mathbb{C}$ of $I(\gamma)$
such that (i) $\{ f_n \}_{n\in\NN}$ is analytic and locally bounded in $\Omega$
and  (ii) $f_n \to f$ pointwise on $I(\gamma)$.  Then
\begin{equation*}
G_n(f_n) - G_n(f) \stackrel{p}{\to} 0
\end{equation*}
as $n \to \infty$. In particular, $G_n(f_n)$ has the same limiting Gaussian distribution as
$G_n(f)$.  
\end{lemma}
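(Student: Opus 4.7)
The plan rests on a Cauchy contour integral representation of $G_n(\cdot)$ combined with the tightness of the Stieltjes transform process implicit in the proof of Theorem \ref{B_S_CLT}.

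First, I would upgrade pointwise convergence on $I(\gamma)$ to locally uniform convergence on $\Omega$. Since $\{f_n\}$ is locally bounded and analytic on the open set $\Omega$, and $I(\gamma)$ contains accumulation points of $\Omega$, Vitali's convergence theorem (Montel compactness plus the identity principle) yields $f_n \to f$ uniformly on every compact subset of $\Omega$, and $f$ is automatically analytic on $\Omega$.

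Next, I would fix a positively oriented simple closed rectifiable contour $\Gamma \subset \Omega$ whose interior contains $I(\gamma)$ and that is bounded away from $I(\gamma)$. Since $a(\gamma_n),b(\gamma_n) \to a(\gamma),b(\gamma)$, the support of $F_{\gamma_n}$ lies inside $\Gamma$ for all sufficiently large $n$, and by Proposition \ref{paulProposition} the event $E_n$ that $\lambda_1$ also lies inside $\Gamma$ has probability $1 - O(\exp(-cn))$; the complement contributes only $o(1)$ to any probability. On $E_n$, Cauchy's integral formula applied to every eigenvalue and to the integration against $F_{\gamma_n}$ gives, for any $h$ analytic in the interior of $\Gamma$,
\begin{equation*}
G_n(h) \;=\; -\frac{1}{2\pi i}\oint_\Gamma h(z)\, M_n(z)\,dz,
\qquad
M_n(z) \;:=\; p\bigl(m_{F_n}(z) - m_{F_{\gamma_n}}(z)\bigr),
\end{equation*}
where $m_{F}$ is the Stieltjes transform of $F$. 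I would apply this with $h = f_n - f$, which is analytic on $\Omega$.

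Finally, the proof of Theorem \ref{B_S_CLT} in Bai--Silverstein establishes weak convergence of the random function $z \mapsto M_n(z)$ on $\Gamma$ to a Gaussian analytic process, and in particular the family $\sup_{z\in\Gamma}|M_n(z)|$ is tight. Combining this with the uniform convergence from Step 1,
\begin{equation*}
|G_n(f_n) - G_n(f)|
\;\le\; \frac{|\Gamma|}{2\pi}\,\sup_{z\in\Gamma}|f_n(z)-f(z)|\;\cdot\;\sup_{z\in\Gamma}|M_n(z)|
\;\xrightarrow{p}\;0,
\end{equation*}
since the deterministic factor is $o(1)$ and the stochastic factor is $O_p(1)$. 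The ``in particular'' assertion follows from Slutsky's theorem: $G_n(f_n) = G_n(f) + o_p(1)$ and $G_n(f) \Rightarrow N(\mu(f),\sigma^2(f))$ by Theorem \ref{B_S_CLT}. The main obstacle is the uniform-in-$z$ tightness of $M_n$ on $\Gamma$, which is not stated as a separate conclusion of Theorem \ref{B_S_CLT} but is a byproduct of its proof via the Stieltjes transform; everything else reduces to routine complex analysis and Vitali's theorem.
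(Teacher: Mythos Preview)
Your approach is essentially the same as the paper's: Vitali--Porter to upgrade the convergence, then a contour integral representation combined with tightness of the Stieltjes-transform process from Bai--Silverstein. The paper tracks Bai--Silverstein's internal machinery more closely---first passing through their truncation/centralization step (which uses the uniform bound on $f_n'$ supplied by Weierstrass), then invoking tightness of the \emph{modified} process $\hat{M}_n$ from their Lemma~1 rather than $M_n$ itself and separately showing $\int (f_n-f)(M_n-\hat{M}_n)\,dz \to 0$---whereas you absorb these issues into the high-probability event $E_n$ and appeal directly to tightness of $M_n$ on $\Gamma$; both routes resolve the obstacle you correctly flag.
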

The proof relies on and adapts parts of the proof of \cite{bai2004clt} Theorem 1.1, along with the Vitali-Porter and Weierstrass
theorems(e.g. \citet[Ch.~1.4, 2.4]{schiff2013normal}).
This lemma is sufficient for the uniform tightness required for (ii) of \Cref{TailBound_LSS}, because of Slutsky's theorem and Prohorov's Theorem(e.g. \cite{van2000asymptotic} Theorem 2.4).
Consequently, we obtain the following corollary.

\begin{corollary} \label{Cor_TailBound_LSS}
For functions $\{f_n\}_{n\in\NN}$, assume that for $n' \in \NN$
(i) $\{f'_n\}_{n > n'}$ is uniformly bounded by $L'$ on $[0, b(\gamma) + \delta]$, 
(ii) $\{ f_n \}_{n > n'}$ is analytic and locally bounded in an open neighborhood $\Omega \subset \mathbb{C}$ of $[a(\gamma), (1 + \sqrt{\gamma})^2]$
and  (iii) $f_n \to f$ pointwise on $[a(\gamma), (1 + \sqrt{\gamma})^2]$.
Then $G_n(f_n) \overset{d}{\to} N(\mu(f), \sigma^2(f))$ and 
$$\mathbb{P} (E_{1,n}^c \cap E_{3,n}(f_n, M)) \leq 2 \exp (-M^2 / (32 (b(\gamma) + \delta) L'^2)) $$
 for $M > 2 M(\{f_n\}_{n > n'})$ and all $n > n'$.
\end{corollary}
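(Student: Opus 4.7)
The corollary is essentially the concatenation of Lemma \ref{G_n_lemma} (to secure uniform tightness) with Proposition \ref{TailBound_LSS} (to produce the tail bound), where hypothesis (i) of the corollary is converted into the Lipschitz hypothesis of the proposition. The plan is therefore to verify that each hypothesis of the two cited results is implied by the hypotheses of the corollary, and then to bookkeep the explicit constants so that $8L^2$ becomes $32(b(\gamma)+\delta)L'^2$.

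First, noting that $(1+\sqrt{\gamma})^2 = b(\gamma)$, hypothesis (ii) of the corollary gives analyticity and local boundedness of $\{f_n\}_{n > n'}$ on a complex neighborhood of $[a(\gamma), b(\gamma)]$, and hypothesis (iii) provides pointwise convergence to $f$ on that interval. (When $\gamma \geq 1$, so that $I(\gamma) = [0,b(\gamma)]$ in the statement of Lemma \ref{G_n_lemma}, one enlarges $\Omega$ to include a neighborhood of $0$ before invoking the lemma; this is harmless because the $f_n$ we apply the result to are analytic at the origin.) Lemma \ref{G_n_lemma} then yields $G_n(f_n) \stackrel{d}{\to} N(\mu(f), \sigma^2(f))$, and Prohorov's theorem makes $\{G_n(f_n)\}_{n > n'}$ uniformly tight, supplying hypothesis (ii) of Proposition \ref{TailBound_LSS}.

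Second, convert the derivative bound in (i) into a Lipschitz bound. For $x \in [0, (b(\gamma)+\delta)^{1/2}]$ and $n > n'$, the chain rule gives
\[
\bigl|(f_n(x^2))'\bigr| \;=\; 2x\,|f'_n(x^2)| \;\leq\; 2(b(\gamma)+\delta)^{1/2}\, L',
\]
so the functions $\{f_n(x^2)\}_{n > n'}$ share the Lipschitz constant $L := 2(b(\gamma)+\delta)^{1/2} L'$ on $[0, (b(\gamma)+\delta)^{1/2}]$, which is hypothesis (i) of Proposition \ref{TailBound_LSS}.

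With both hypotheses in place, the proposition gives $M(\{f_n\}_{n > n'}) < \infty$ and, for $M > 2 M(\{f_n\}_{n > n'})$,
\[
\mathbb{P}\bigl(E_{1,n}^c \cap E_{3,n}(f_n, M)\bigr) \;\leq\; 2\exp\!\bigl(-M^2/(8L^2)\bigr) \;=\; 2\exp\!\bigl(-M^2/(32(b(\gamma)+\delta)L'^2)\bigr),
\]
which is precisely the claimed bound. No step is especially delicate; the only mild bookkeeping point is the matching of analyticity domains noted above, and otherwise the argument is a direct stitching together of the preceding lemma and proposition.
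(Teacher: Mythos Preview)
Your proposal is correct and follows essentially the same route as the paper: the paper likewise derives the Lipschitz constant from $(f_n(x^2))' = 2x f_n'(x^2)$ and invokes Lemma~\ref{G_n_lemma} together with Slutsky/Prohorov to obtain the tightness needed for Proposition~\ref{TailBound_LSS}. Your explicit bookkeeping $L = 2(b(\gamma)+\delta)^{1/2}L'$, hence $8L^2 = 32(b(\gamma)+\delta)L'^2$, and your remark about enlarging $\Omega$ when $\gamma \geq 1$ are both apt and slightly more careful than the paper's own sketch.
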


Now it is easy to see that $\{g_n\}_{n > n'}$ satisfies sufficient conditions for \Cref{TailBound_Indep} and \Cref{Cor_TailBound_LSS} for $U_f = \delta^{-1} $, $n' = n_0(\delta)$ and $L' = \delta^{-2}$, from $ | g_n(\lambda) | \leq (\rho_n - b(\gamma) - \delta)^{-1} < \delta^{-1}$ for all $\lambda \in  [0, b(\gamma) + \delta]$ and $n > n_0(\delta)$.
Hence, \eqref{WT_l_hat_ineq} gives
\begin{corollary} \label{Cor_l_hat}
	For any $\delta \in (0, \min ( \eta(\ell, \gamma) / 3, b(\gamma)/2 ) )$ and $M > 0$ , 
	$$\mathbb{P} ( E^c_{1,n} \cap \{n^{1/2} | \lh - \rho_n | > M \} ) = O(\exp ( - c(\gamma, \ell, \delta) M))$$
	for a constant $c(\gamma, \ell,\delta)$ depending only on $\gamma, \ell, \delta$.
%\begin{align*}
%\mathbb{P} ( E^c_{1,n} \cap \{ | \lh - \rho_n | > M \} ) 
%& \leq \mathbb{P} ( E_{2,n}(g_n, n^{1/2} M / (2l \rho_n)) ) + \mathbb{P} ( E^c_{1,n} \cap E_{3, n}(g_n, nM / (2l \rho_n))) 
%\\ & \leq 4 \exp(- n^{1/2} \delta M / (8 l \rho_n) )+ 2 \exp (- n^2 \delta^2 M^2 / (128 ((1+%\sqrt{\gamma})^2 + \delta) l^2 \rho_n^2 ) )
%\end{align*}
\end{corollary}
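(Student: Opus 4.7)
The plan is to combine the deterministic inequality \eqref{WT_l_hat_ineq} with the two tail bounds for linear spectral statistics already in hand. On $E_{1,n}^c$, \eqref{WT_l_hat_ineq} gives the almost-sure estimate
\[
n^{1/2}|\lh - \rho_n| \,\leq\, \ell \rho_n \bigl(|S_n(g_n)| + n^{-1/2}|G_n(g_n)|\bigr),
\]
so the event $\{n^{1/2}|\lh - \rho_n| > M\}$, intersected with $E_{1,n}^c$, is contained in
\[
\{|S_n(g_n)| > M/(2\ell\rho_n)\} \,\cup\, \{|G_n(g_n)| > M n^{1/2}/(2\ell\rho_n)\}.
\]
A union bound therefore reduces the claim to controlling these two pieces separately.

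For the first piece, the paragraph preceding the corollary has already verified that $|g_n(\lambda)| \leq \delta^{-1}$ on $[0, b(\gamma)+\delta]$ for all $n > n_0(\delta)$. Proposition \ref{TailBound_Indep} applied with $U_{g_n} = \delta^{-1}$ yields
\[
\mP\bigl(E_{1,n}^c \cap \{|S_n(g_n)| > M/(2\ell\rho_n)\}\bigr) \,\leq\, 15 \exp\bigl(-M\delta/(2\ell\rho_n)\bigr),
\]
which is at most $15\exp(-c_1 M)$ with $c_1 = \delta/(4\ell\rho(\ell,\gamma))$ once $n$ is large enough that $\rho_n \leq 2\rho(\ell,\gamma)$. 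For the second piece, the same paragraph verifies that $\{g_n\}$ satisfies the hypotheses of Corollary \ref{Cor_TailBound_LSS} with $L' = \delta^{-2}$. Applying that corollary gives, provided $M n^{1/2}/(2\ell\rho_n) > 2 M(\{g_n\})$,
\[
\mP\bigl(E_{1,n}^c \cap \{|G_n(g_n)| > M n^{1/2}/(2\ell\rho_n)\}\bigr) \,\leq\, 2\exp(-c_2 n M^2)
\]
for some $c_2 = c_2(\gamma,\ell,\delta) > 0$. Since $nM^2 \geq M$ whenever $M \geq n^{-1}$, this is in turn bounded by $2\exp(-c_2 M)$ on the relevant range.

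It remains to handle the boundary regimes. When $M \leq C_0 n^{-1/2}$ for an appropriate $C_0$, so that the threshold condition in Corollary \ref{Cor_TailBound_LSS} fails, the target bound $\exp(-cM)$ is bounded below by a positive constant and the estimate is vacuous upon enlarging the implicit $O(\cdot)$ constant; the same remark absorbs the finitely many $n \leq n_0(\delta)$. Combining the two contributions with $c = \tfrac{1}{2}\min(c_1, c_2)$ completes the proof. No real obstacle arises here: the substantive work has already been carried out in establishing inequality \eqref{WT_l_hat_ineq} and the tail bounds of Proposition \ref{TailBound_Indep} and Corollary \ref{Cor_TailBound_LSS}, and what remains is bookkeeping to merge the two exponential estimates into a single bound of the claimed form.
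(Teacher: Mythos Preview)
Your proposal is correct and follows essentially the same approach as the paper: the paper does not give a separate proof environment for this corollary, treating it as an immediate consequence of inequality \eqref{WT_l_hat_ineq} together with the already-verified applicability of Proposition \ref{TailBound_Indep} (with $U_f = \delta^{-1}$) and Corollary \ref{Cor_TailBound_LSS} (with $L' = \delta^{-2}$) to $\{g_n\}$. Your write-up simply makes the union-bound splitting and the boundary bookkeeping explicit.
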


Finally, we verify \ref{step1} as follows :
let $\delta \in (0, \min ( \eta(\ell, \gamma) / 3, \gamma/2 ) )$ and take $\epsilon > 0$ such that 
$\epsilon^2 + 3\epsilon < \gamma^2 / 8 $.
Then, if $\max(|G_n(m_2)|, |G_n(m_1)|) \leq n \epsilon $ for $ n > n_0(\delta)$,
\begin{align*}
\ssf_n(m_2) - \ssf_n(m_1)^2 
&\geq \ssf_{\gamma_n}(m_2) -  \epsilon - (\ssf_{\gamma_n}(m_1)+\epsilon)^2 
= \gamma_n^2 - (\epsilon^2 + 3\epsilon) 
> (\gamma - \delta)^2 - \gamma^2 / 8
> \gamma^2 / 8
\end{align*}
since $\ssf_{\gamma_n}(m_1) = 1, \ssf_{\gamma_n}(m_2) = 1 + \gamma_n^2$ from \citet[Proposition.~2.13]{yao2015sample}, and 
$\delta > |\rho_n - \rho(\ell, \gamma)| = \ell |\gamma_n - \gamma| / (\ell-1) \geq |\gamma_n - \gamma|$.
Therefore,
$
E_{1,n}^c \cap \{|\lh - \rho_n| \leq \delta \} \cap E_{3,n}^c(m_1, n \epsilon) \cap E_{3,n}^c(m_2, n \epsilon) \subset E_{0,n}
$
from \eqref{E_0n}, i.e. \ref{step1} is established by \Cref{paulProposition}, \Cref{TailBound_LSS} and \Cref{Cor_l_hat}.

Last but not least, we have the following corollary for moments for the future use, from \Cref{Cor_TailBound_LSS} and Theorem 2.20 of \cite{van2000asymptotic}.
\begin{corollary} \label{Cor_limiting_moment}
	 For functions $\{f_n\}_{n\in\NN}$ and $f$ satisfying the conditions for \Cref{Cor_TailBound_LSS} and any sequence of measurable $E_n$ such that $E_n \subset E_{1,n}^c$ and $\lim_{n\rightarrow\infty}\PP{E_n} = 1$,
	\[
	\lim_{n\rightarrow\infty}\EE{I(E_n) (G_n(f_n))^k} = \tau_k(f), \forall k\in\NN,
	\]
	where $\tau_k(f)$ denotes the $k^{\text{th}}$ moment of $N(\mu(f), \sigma^2(f))$.
	In particular, since $\{g_n\}_{n\in\NN}, g$ and $\{E_{0,n}\}_{n\in\NN}$ satisfy these sufficient conditions, $ \lim_{n\rightarrow\infty}\EE[n]{ (G_n(g_n))^k} = \tau_k(g)$ holds.
\end{corollary}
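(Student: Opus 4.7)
The plan is to upgrade the distributional convergence supplied by Corollary \ref{Cor_TailBound_LSS} to convergence of the truncated moments $\EE{I(E_n)(G_n(f_n))^k}$, by verifying uniform integrability of $|I(E_n) G_n(f_n)|^k$ for each fixed $k \in \NN$, and then invoking Theorem 2.20 of \cite{van2000asymptotic}. The argument splits naturally into a distributional step and a tail-control step; each is short once the hypotheses of Corollary \ref{Cor_TailBound_LSS} are on the table.

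For the distributional step, Corollary \ref{Cor_TailBound_LSS} yields $G_n(f_n) \stackrel{d}{\to} N(\mu(f), \sigma^2(f))$, and the hypothesis $\PP{E_n} \to 1$ gives $I(E_n) \stackrel{p}{\to} 1$. Slutsky's theorem then delivers $I(E_n) G_n(f_n) \stackrel{d}{\to} N(\mu(f), \sigma^2(f))$.

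For the tail-control step, the key input is the sub-Gaussian tail bound from Corollary \ref{Cor_TailBound_LSS}: for all sufficiently large $n$ and every $M > 2 M(\{f_n\})$,
\[
\mathbb{P}(E_{1,n}^c \cap \{|G_n(f_n)| > M\}) \leq 2 \exp(-M^2 / (32 (b(\gamma)+\delta) L'^2)).
\]
Since $E_n \subset E_{1,n}^c$ by hypothesis, the same bound dominates $\mathbb{P}(|I(E_n) G_n(f_n)| > M)$. Applying the layer-cake formula and splitting at $M_0 = 2 M(\{f_n\})$ (the trivial bound $1$ on $[0, M_0]$, the sub-Gaussian tail on $[M_0, \infty)$) shows that $\sup_n \EE{|I(E_n) G_n(f_n)|^{k+1}} < \infty$, which in turn implies uniform integrability of $\{|I(E_n) G_n(f_n)|^k\}_n$.

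Combining the two ingredients via Theorem 2.20 of \cite{van2000asymptotic} --- convergence in distribution together with uniform integrability of $|X_n|^k$ implies convergence of the $k$-th moment --- delivers $\EE{I(E_n)(G_n(f_n))^k} \to \tau_k(f)$, as claimed. The only real obstacle is ensuring that $M(\{f_n\}) < \infty$ uniformly over the range of $n$ considered, so that the sub-Gaussian bound can be invoked starting from some threshold $M_0$ independent of $n$; this is precisely the content of Proposition \ref{TailBound_LSS} once the analyticity and boundedness hypotheses propagated through Corollary \ref{Cor_TailBound_LSS} are verified, so no further work is required. The specialization to $f_n = g_n$ and $E_n = E_{0,n}$ is immediate from the discussion preceding Corollary \ref{Cor_l_hat}.
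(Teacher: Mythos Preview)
Your proposal is correct and follows essentially the same approach as the paper: establish uniform integrability from the sub-Gaussian tail bound via the layer-cake identity, combine with distributional convergence, and invoke Theorem~2.20 of \cite{van2000asymptotic}. The only organisational difference is that the paper works first with the globally truncated $\textsf{f}_n$, proves $\EE{(G_n(\textsf{f}_n))^k}\to\tau_k(f)$ without any indicator, and then peels off the $I(E_n^c)$ contribution by Cauchy--Schwarz before using $G_n(f_n)=G_n(\textsf{f}_n)$ on $E_{1,n}^c\supset E_n$; your route, bundling $I(E_n)$ in from the outset and applying Slutsky, is a slightly more direct repackaging of the same ingredients.
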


\noindent {\bf 3.4. Edgeworth expansion for sums of independent random variables}

A heuristic conversion between characteristic function and Edgeworth expansion is described in \citet[pg.~48]{hall1992bootstrap}.
Justification for the conversion is the main subject of Chapter VI of
\cite{petrov1975sums}, and leads to his Theorem 7, which we state in
modified form in Theorem \ref{Edgeworth_for_Indep_Vars} below. 
For us it yields an expression of $\PP{M_n \leq x \mid \Lambda }$ up
to the accuracy of $o(n^{-1/2})$.
% , we shall introduce a modified version of Theorem 7 in \citet[Pg.~175]{petrov1975sums}.

%%%%%%%%%%%%%%%%%%%%%%%%%%%%%%%%%%%%%%%%%%%
% Ref : Peasant guide - Petrov
%%%%%%%%%%%%%%%%%%%%%%%%%%%%%%%%%%%%%%%%%%%
For clarity, we first define relevant notations. 
Let $(X_{ni})_{n\in\NN, i\in\{1, \cdots n\}}$ be a triangular array of random variables with zero means and finite variances, and assume that $X_{n1}, \cdots, X_{nn}$ are independent for all $n\in\NN$.
Furthermore,
\begin{itemize}
\item $\widebar{V}_{n} := n^{-1} \sum_{i=1}^{n} \Var{X_{ni}}$ is positive for all sufficiently large $n$.
\item $\bar{\chi}_{v, n}$ is the average $v^{th}$ cumulant of $\widebar{V}_n^{-1/2} X_{ni}$'s, for $v\in\NN$.
\item $C_n(t) := \EE{\exp(it \widebar{V}_n^{-1/2} \sum_{j=1}^{n} X_{ni})}$.
\item For $v\in\NN$,
\begin{align*}
Q_{vn}(x) &:= \sum_{w=1}^{v} \frac{1}{w!} \p{ \sum_{*(w, v)} \prod_{k=1}^{w} \frac{\bar{\chi}_{j_k + 2, n}}{(j_k + 2)!}}  (-1)^{v + 2w} \frac{d^{v + 2w}}{dx^{v + 2w}} \Phi (x),
\end{align*}
where the summation $*(w, v)$ is over $\{(j_1, \cdots, j_w) \in \NN^w \mid j_1 + \cdots + j_w = v\}$.
\end{itemize}
%\textbf{Remark}. 
%This notation generalizes the one in \citet[pg.~48]{hall1992bootstrap}, which is derived from 
One verifies that $Q_{vn}(x)$ 
% adopts the same property as stated in the text : it
is a product of $\phi(x)$ and a degree-$(3v-1)$ 
%(and even for odd $v$ and odd for even $v$) 
polynomial of $x$ with coefficients being polynomials of $\bar{\chi}_{j, n}, j \in \{3, \cdots, v+2\}$.
Further, $Q_{vn}$ is even for odd $v$ and odd for even $v$.
% Now, the theorem is stated as follows.
\begin{theorem} \label{Edgeworth_for_Indep_Vars}
For fixed $k \geq 3$, $l \geq 0$ and for $(X_{ni})_{n\in\NN, i\in\{1, \cdots n\}}$, assume that there exist $r_1(k), r_2(n ; k, \tau), r_3(n ; k, l, \epsilon)$ satisfying the following regularity conditions :
\begin{enumerate} [label=\textbf{{R}{\arabic*}}]
	\item \label{r1} For all sufficiently large $n\in\NN$, % $r_1(k)$ is a positive constant such that
	\[
	% scaled version
	n^{-1} \widebar{V}_n^{-k/2} \sum_{i=1}^{n} \EE{|X_{ni}|^k} \leq r_1(k) < \infty.
	% non scaled version :
	% \widebar{V}_n \geq g, \quad n^{-1} \sum_{j=1}^{n} \EE{|X_{nj}|^k} \leq r_1(k) < \infty
	\]
	
 	\item \label{r2} For some $\tau \in (0, 1/2)$, 
	\[
	n^{-1} \widebar{V}_n^{-k/2} \sum_{i=1}^{n} \EE{ I (\widebar{V}_n^{-1/2} |X_{ni}| > n^{\tau}) |X_{ni}|^k } 
	\leq r_2(n ; k, \tau) = o(1).
	\]
	\item \label{r3} A generalized Cramer's condition
	\[
	n^{(k + l -2)/2} \int_{|t| > \epsilon } |t|^{l-1} |C_n(t)| dt \leq r_3(n ; k, l, \epsilon) = o(1)
	\]
	holds for some $\epsilon \in (0, 3/(4H_3) )$ and all $n > n_3(k, l, \epsilon)$, where $H_3 := r_1(k)^{3/k} < \infty$ is an upper bound of the average third absolute moments(by power mean inequality).
\end{enumerate}
Then, there exists $N = N(k, l, \tau, \epsilon, n_3)$ such that for $n > N$, the inequality
\begin{align*}
\Bigg|
\frac{d^l}{dx^l} \mathbb{P}( n^{-1/2} \widebar{V}_n^{-1/2} \sum_{i=1}^{n} X_{ni} \leq x ) - \frac{d^l}{dx^l} (\Phi(x) + \sum_{v=1}^{k-2} n^{-v/2} Q_{vn}(x))
\Bigg| 
\leq n^{-(k-2)/2}  \delta(n)
\end{align*}
 holds for all $x\in\RR$.
Here $\delta(n) = o(1)$ depends only on $n, k, l, \tau, \epsilon, r_1(k), r_2(n; k, \tau)$ and $r_3(n; k, l, \epsilon)$.
%, and $N$ depends only on $k, l, \tau, \epsilon$ and $n_3(k, l, \epsilon)$.
\end{theorem}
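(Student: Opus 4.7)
The approach is the classical Fourier-analytic one for Edgeworth expansions, following \citet[Ch.~VI]{petrov1975sums}. Write $F_n(x) = \mathbb{P}(n^{-1/2} \widebar{V}_n^{-1/2} \sum_{i=1}^n X_{ni} \le x)$ for the normalized CDF and $G_n(x) = \Phi(x) + \sum_{v=1}^{k-2} n^{-v/2} Q_{vn}(x)$ for the target expansion. By construction of the $Q_{vn}$ from the formal cumulant series, the Fourier transform $\widehat{G}_n$ coincides with the asymptotic expansion of $C_n(t)$ in powers of $n^{-1/2}$ through order $k-2$, so the argument reduces to controlling the residual in Fourier space. I would then invoke an Esseen-type smoothing inequality for the $l$-th derivative: for any $T > 0$,
\[
\sup_x \bigl| F_n^{(l)}(x) - G_n^{(l)}(x) \bigr| \lesssim \int_{|t| \le T} |t|^{l-1} \bigl| C_n(t) - \widehat{G}_n(t) \bigr| \, dt + \int_{|t| > T} |t|^{l-1} |C_n(t)| \, dt + T^{l-k+1}.
\]

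With this in hand, the plan is to take $T = c_0 n^{1/2}/H_3$, the Lyapounov radius at which R1 yields $|C_n(t)| \le e^{-t^2/3}$. On $|t| \le T$ I would expand $\log C_n(t)$ as a polynomial in $it/\sqrt{n}$ weighted by the scaled cumulants $\bar{\chi}_{v,n}$, using R1 to bound absolute moments up to order $k$ and R2 (through the standard truncation argument) to replace truncated moments by their untruncated counterparts with $o(1)$ loss. Exponentiating and matching term by term against $\widehat{G}_n$ reproduces each $Q_{vn}$ exactly and leaves a residual bounded by $n^{-(k-2)/2}$ times a Gaussian-integrable function of $t$, so the first integral is of size $n^{-(k-2)/2} o(1)$. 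The intermediate band $T < |t| \le \epsilon$ contributes only exponentially small terms because the constraint $\epsilon < 3/(4H_3)$ imposed by R3 keeps the cubic cumulant bound $|C_n(t)| \le \exp(-c_1 n t^2/\widebar{V}_n)$ valid throughout this range. The outer range $|t| > \epsilon$ is precisely what R3 was tailored to handle: the weight $|t|^{l-1}$ and the prefactor $n^{(k+l-2)/2}$ match exactly, so R3 absorbs this contribution into $o(n^{-(k-2)/2})$.

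The departures from Petrov's original Theorem 7 are the derivative index $l \ge 0$ and the triangular-array formulation with $\delta(n)$ and $N$ expressed explicitly through the quantities listed in the statement. The main obstacle is the bookkeeping for $l \ge 1$: the smoothing inequality for $F_n^{(l)} - G_n^{(l)}$ carries an extra weight $|t|^{l-1}$ in the Fourier integral, which is precisely why the prefactor in R3 has been strengthened from $n^{(k-2)/2}$ to $n^{(k+l-2)/2}$, and the boundary term $T^{l-k+1}$ in the smoothing bound must be traded off carefully against the cumulant-expansion residual. Tracking all constants so that the dependence of $N$ and $\delta(n)$ reduces to $k, l, \tau, \epsilon, n_3, r_1, r_2, r_3$ requires careful accounting but introduces no new idea, since each estimate above is manifestly polynomial in these data.
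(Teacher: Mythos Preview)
The paper does not supply its own proof of this theorem: it is stated as a modified form of \citet[Ch.~VI, Theorem~7]{petrov1975sums}, with the proof deferred entirely to that reference. The only authorial contribution is the explicit tracking of the dependence of $N$ and $\delta(n)$ on $k, l, \tau, \epsilon, n_3, r_1, r_2, r_3$, which the paper asserts can be read off from Petrov's argument but does not itself carry out.

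Your outline is precisely the Fourier-analytic route that Petrov takes, so in that sense you are reconstructing what the paper cites rather than offering an alternative. One point to be careful about: your Esseen-type inequality for the $l$-th derivative, as written with a remainder $T^{l-k+1}$, is not quite the right form. For $l \ge 1$ the distribution function need not a priori have $l$ derivatives, and the standard smoothing lemma with a $c/T$ boundary term applies only to bounded-variation differences. Petrov handles this by first using \ref{r3} to guarantee integrability of $|t|^{l}|C_n(t)|$, which yields existence and boundedness of $F_n^{(l)}$ directly via Fourier inversion, and then bounds $F_n^{(l)} - G_n^{(l)}$ by the full integral $\int_{\RR} |t|^{l} |C_n(t) - \widehat{G}_n(t)|\,dt$ without any truncation remainder. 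The three-region decomposition you describe then applies to this single integral. This is a technical correction rather than a gap in strategy, but it does affect how \ref{r3} enters: it is needed not only for the tail $|t| > \epsilon$ but also to justify the very existence of $F_n^{(l)}$ when $l \ge 1$.
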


Our reason for presenting this theorem along with the explicit
dependence of the constants is that it provides a uniform bound on
the (derivatives of) difference between the distribution function and
corresponding Edgeworth expansion for all sufficiently large $n$.
Also, we briefly comment on the regularity conditions :
\ref{r1} is about boundedness of $\bar{\chi}_{v, n}, v = 3, \cdots, k$, while \ref{r2}, \ref{r3} are related to tail behavior ; in particular, \ref{r2} resembles the Lindeberg condition for the CLT.

Back to our problem, we state a special case of \Cref{Edgeworth_for_Indep_Vars} when $k=3$ and $l = 0$.

\begin{corollary}
\label{1stEdgeworthSumIndVars}
For $(X_{ni})_{n\in\NN, i\in\{1, \cdots n\}}$ satisfying \ref{r1}, \ref{r2} and \ref{r3} for $k = 3$ and $l = 0$,
\[
\mathbb{P} (n^{-1/2} \widebar{V}_n^{-1/2} \sum_{i=1}^{n} X_{ni} \leq x)
= \Phi(x) + n^{-1/2} \bar{\chi}_{3, n} (1 - x^2)\phi(x) / 6 + o(n^{-1/2}), 
\]
uniformly in $x\in\RR$.
\end{corollary}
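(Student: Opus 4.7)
The plan is to directly specialize Theorem \ref{Edgeworth_for_Indep_Vars} to $k=3$ and $l=0$, then identify the only surviving Edgeworth polynomial $Q_{1n}(x)$ in closed form. With these choices the finite sum $\sum_{v=1}^{k-2} n^{-v/2} Q_{vn}(x)$ collapses to the single term $n^{-1/2} Q_{1n}(x)$, and the uniform remainder $n^{-(k-2)/2}\delta(n) = n^{-1/2}\delta(n)$ is $o(n^{-1/2})$ by the $\delta(n)=o(1)$ clause of the theorem. Hence all that remains is the algebraic evaluation of $Q_{1n}(x)$.

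For $v=1$, the outer index $w$ is forced to equal $1$, and the set $\ast(1,1)$ contains the single multi-index $(j_1)=(1)$. Plugging into the definition,
\[
Q_{1n}(x) \;=\; \frac{1}{1!}\cdot \frac{\bar{\chi}_{3,n}}{3!}\cdot (-1)^{1+2}\cdot \frac{d^{3}}{dx^{3}}\Phi(x) \;=\; -\frac{\bar{\chi}_{3,n}}{6}\,\phi''(x).
\]
The standard Hermite identity $\phi''(x)=(x^2-1)\phi(x)$ (equivalently $-\phi''=H_2\phi$ with $H_2(x)=x^2-1$) then gives
\[
Q_{1n}(x) \;=\; \frac{\bar{\chi}_{3,n}}{6}(1-x^2)\phi(x),
\]
which is exactly the leading correction asserted by the corollary. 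The uniformity in $x\in\mR$ is automatic because the $l=0$ inequality supplied by Theorem \ref{Edgeworth_for_Indep_Vars} is a uniform bound on the difference of distribution functions.

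There is no genuine obstacle here: the regularity assumptions \ref{r1}–\ref{r3} are imposed in the hypothesis and all quantitative work is absorbed into the $\delta(n)$ produced by the theorem. The only bookkeeping to watch is (i) the sign coming from $(-1)^{v+2w}$ at $v=w=1$, and (ii) the conversion of $\Phi'''(x)=\phi''(x)$ into the Hermite form $(1-x^2)\phi(x)$; once these are checked the identification $Q_{1n}(x)\phi(x)^{-1}=\bar{\chi}_{3,n}(1-x^2)/6$ is immediate, completing the proof.
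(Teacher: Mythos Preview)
Your proposal is correct and follows exactly the approach implicit in the paper: the corollary is stated there simply as ``a special case of Theorem~\ref{Edgeworth_for_Indep_Vars} when $k=3$ and $l=0$'' with no separate proof, so the only work is the direct specialization plus the algebraic identification of $Q_{1n}(x)=\tfrac{\bar{\chi}_{3,n}}{6}(1-x^2)\phi(x)$, which you carry out correctly.
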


Now from \eqref{z_Lambda} and \eqref{S_n_def}, observe that conditioned on $\Lambda$,
$S_n( ( 1  + n^{-1/2} x_n h_n) g_n )$ is a sum of independent random variables. %where $ y_n := n^{-1/2} \rho_n^{-1} \sigma_n x$.
That is, \Cref{1stEdgeworthSumIndVars} is applicable for $X_{ni} = c_{ni} (z_i^2 - 1)$ where $c_{ni} := (1+n^{-1/2} x_n h_n(\lambda_i)) g_n(\lambda_i)$, so long as the corresponding regularity conditions \ref{r1}, \ref{r2} and \ref{r3} hold. 
In the moments analysis below, we show that this is the case on $E_{0,n}$ with the \textit{same} $r_1(k), r_2(n ; k, \tau), r_3(n ; k, l, \epsilon)$, and $n_3(k, l, \epsilon)$.
% and thus obtain the explicit form of the first order expansion for $M_n$ as follows.

\textbf{Moments analysis.}
Note that $(z_i^2 - 1)$ are mean zero i.i.d. with the characteristic function $\exp(-i\theta)(1-2 i\theta)^{-1/2}$, and so the $k^{th}$ cumulant is $\kappa_k = 2^{k-1} (k-1)!$ for $k\in\NN$.
In particular, adopting the notations above, we have
\begin{align*}
\widebar{V}_n &= 2 n^{-1} \sum_{i=1}^{n} c_{ni}^2, \quad
\bar{\chi}_{k, n} = \kappa_{k}\widebar{V}_n^{-k/2} n^{-1} \sum_{i=1}^{n} c_{ni}^k, \quad
|C_n(t)| = \prod_{i=1}^{n} (1 + 4 \widebar{V}_n^{-1} c_{ni}^2 t^2)^{-1/4}.
\end{align*}
We will show that there exists a positive $C$ such that
\begin{equation} \label{moment_ineq}
C \max_{i=1, \cdots, n} c_{ni}^2 \leq \widebar{V}_n
\end{equation}
for all $x \in \RR$ on $E_{0,n}$, for all sufficiently large $n$.
Note that $c_{ni}$ depends on $x$.
Let us assume \eqref{moment_ineq} for now and verify that \ref{r1},
\ref{r2} and \ref{r3} hold uniformly in $x\in\RR$ on $E_{0,n}$.
First, 
\[
n^{-1} \widebar{V}_n^{-k/2} \sum_{j=1}^{n} \EE{|X_{nj}|^k} 
=  \widebar{V}_n^{-k/2} n^{-1} \sum_{i=1}^{n} |c_{ni}|^k \EE{|z_1^2 - 1|^k}
\leq C^{-k/2} \EE{|z_1^2 - 1|^k},
\]
hence \ref{r1} holds with $r_1(k) = C^{-k/2} \EE{|z_1^2 - 1|^k}$ for all $k\in\NN$. % follows from power mean inequalities for exponent pairs (4,2) and (4,3), and also
Now use the Markov inequalities and then \ref{r1} to get
\[
n^{-1} \widebar{V}_n^{-k/2} \sum_{i=1}^{n} \EE{ I(\widebar{V}_n^{-1/2} |X_{ni}| > n^{\tau}) |X_{ni}|^k }
\leq n^{-\tau-1}  \widebar{V}_n^{-(k+1)/2} \sum_{i=1}^{n} \EE{|X_{ni}|^{k+1}} \leq n^{-\tau} r_1(k+1),
\] 
which shows that \ref{r2} holds with $r_2(n ; k, \tau) = n^{-\tau} r_1(k+1)$ for any $\tau \in (0, 1/2)$ and $k\in\NN$. 

For any $m\in \{1, \cdots, n\}$, define 
$s_m := \sum_{1\leq i_1 < \cdots < i_m \leq n} \prod_{j=1}^{m} c_{ni_j}^2 $ and $n_m := n^m - n!/(n-m)!$.
We then have 
\begin{align*}
(n \widebar{V}_n/2)^m & = (\sum_{i=1}^n c_{ni}^2)^m = \sum_{1 \leq
                        i_1, \cdots, i_m \leq n} \prod_{j=1}^{m}
                        c_{ni_j}^2 \\ 
& \leq n_m \max_{i=1, \cdots, n} c_{ni}^{2m} + m! s_m \leq C^{-m} n_m \widebar{V}_n^m +  m! s_m,
\end{align*}
so that $ ( 2\widebar{V}_n^{-1})^m s_m \geq (n^m - (2 C^{-1}) ^m n_m) / m!$.
Hence 
\begin{align*}
\prod_{i=1}^{n} (1 + 4 \widebar{V}_n^{-1} c_{ni}^2 t^2)
&\geq (4 \widebar{V}_n^{-1}t^2)^m s_m
\geq (2n t^2)^m (1 - (2 C^{-1}) ^m n_m/n^m) / m!.
\end{align*}
Now $\lim_{n\to\infty} n_m / n^m = 0$ for any fixed $m\in\NN$, so,  with $m = 4(k+l)$, it follows that $ |C_n(t)| \leq 2 (m!)^{1/4} (2n t^2)^{-(k+l)} $ for all $n  > n_3(k, l, \epsilon)$.
This implies \ref{r3} with 
$r_3(n ; k, l, \epsilon) = 2^{-(k+l-2)}(4(k+l)!)^{1/4} n^{-(k+l+2)/2} \epsilon^{- (2k + l)} / (2k+l)$ 
for any $\epsilon \in (0, 3/(4H_3))$ and $ k \geq 3, l \geq 0$.

\begin{proof} [Proof of \eqref{moment_ineq}]
Throughout the proof, $n > n_0(\delta)$ and $\Lambda \in E_{0,n}$ are assumed, so that $\lambda_i \in [0, \rho)$,
$ g_n(\lambda_i) = (\rho_n  - \lambda_i)^{-1} \in [\rho_n^{-1}, \delta^{-1}] $ and 
$|h_n(\lambda_i)| = |r_n - \lambda_i g_n(\lambda_i)| \leq \max(r_n, \rho \delta^{-1})$.
Consequently,
\[
|c_{ni}| = | 1+n^{-1/2} x_n h_n(\lambda_i) | g_n(\lambda_i) \leq \delta^{-1} ( 1 + \max(r_n, \rho \delta^{-1}) |n^{-1/2}  x_n| ), 
%|c_{ni}| = | 1+y_n h_n(\lambda_i) | g_n(\lambda_i) \leq \delta^{-1} ( 1 + \max(r_n, \rho \delta^{-1}) |y_n| ), 
\]
so that $\max_{i=1, \cdots, n} c_{ni}^2 \leq C_1 ( 1 + C_2 |n^{-1/2}  x_n| )^2$ for positive constants $C_1, C_2$ independent of $n$ and $x$.
Therefore, it suffices to show that there exists a positive $\epsilon$ such that
\begin{align} \label{moment_ineq_pf1}
\epsilon (1 + C_2 |n^{-1/2}  x_n|)^2 \leq \widebar{V}_n /2,
\end{align}
for all $ x_n \in \RR$. %$x \in \RR$, or for all $y_n \in \RR$. 
Let $v_k = \textsf{F}_n( g_n^2 h_n^k )$ for $k = 0, 1, 2$, and then write $ \widebar{V}_n / 2 = v_2 (n^{-1/2}  x_n)^2 + 2 v_1 (n^{-1/2}  x_n) + v_0$.
Hence \eqref{moment_ineq_pf1} is equivalent to 
\[
2 (C_2\epsilon - v_1 \text{sign}(x_n) )|n^{-1/2}  x_n| \leq (v_2 - \epsilon C_2^2) (n^{-1/2}  x_n)^2 + (v_0 - \epsilon) 
\]
for all $x_n \in \RR$.
In view of the AM-GM inequality and its equality condition, this is equivalent to $0\leq (v_0 -\epsilon), (v_2 - \epsilon C_2^2)$ and $(v_1 + C_2 \epsilon)^2 \leq (v_2 - C_2^2 \epsilon) (v_0 - \epsilon)$.
But then the first and the third inequalities yield the second, so the desired condition is
 $$\epsilon \in (0, \min ( v_0, (v_2 v_0 - v_1^2) (v_0 C_2^2 + 2 v_1 C_2 + v_2)^{-1} )) ).$$
This is true when 
\begin{equation} \label{V_discriminant_ineq}
v_2 v_0 - v_1^2 \geq C_4
\end{equation}
for a positive $C_4$,
because 
$v_0 \geq 1$, $v_0 C_2^2 + 2 v_1 C_2 + v_2 = v_0 (C_2 + v_1/v_0)^2 + (v_0 v_2 - v_1^2) / v_0$ is positive when \eqref{V_discriminant_ineq} holds, and bounded above on $E_{0,n}$.
Finally, since $( \sum a_i^2 ) ( \sum b_i^2 ) - (\sum a_i b_i)^2 = \sum_{i < j } (a_i b_j - a_j b_i)^2$ 
and $ h_n(\lambda') - h_n(\lambda) = \lambda g_n(\lambda) - \lambda' g_n(\lambda') = \rho_n g_n(\lambda) g_n(\lambda') (\lambda - \lambda')$, we have
% $h_n(\lambda_i) - h_n(\lambda_j) = \lambda_j g_n(\lambda_j)- \lambda_i g_n(\lambda_i) = \rho_n g_n(\lambda_i) g_n(\lambda_j) (\lambda_j - \lambda_i)$,
\begin{align*}
v_2 v_0 - v_1^2 & = \ssf_n(g_n^2 h_n^2) \ssf_n(g_n^2) - \ssf_n(g_n^2 h_n)^2
%&= n^{-2} ((\sum_{i=1}^{n} g_n( \lambda_i)^2 h_n(\lambda_i)^2 ) (\sum_{i=1}^{n} g_n( \lambda_i)^2) - (\sum_{i=1}^{n} g_n( \lambda_i)^2 h_n(\lambda_i) )^2 ) \\
= n^{-2} \sum_{1\leq i < j \leq n} ( g_n(\lambda_i) g_n(\lambda_j) )^2 
\p{ h_n(\lambda_i)- h_n(\lambda_j) }^2 \\
%&= n^{-2} \sum_{1\leq i < j \leq n} \p{ g_n(\lambda_i) g_n(\lambda_j) }^2  \p{ \lambda_i g_n(\lambda_i)- \lambda_j g_n(\lambda_j) }^2 \\
&= \rho_n^2 n^{-2} \sum_{1\leq i < j \leq n} \p{ g_n(\lambda_i) g_n(\lambda_j) }^4
\p{ \lambda_i - \lambda_j }^2 \\
&\geq \rho_n^{-6} n^{-2} \sum_{1\leq i < j \leq n} \p{ \lambda_i - \lambda_j }^2 
= \rho_n^{-6} (\ssf_n(m_2) - \ssf_n(m_1)^2) \geq  (\rho + \gamma)^{-6} \gamma^2 / 8,
\end{align*}
so we have shown \eqref{moment_ineq}, and consequently the claim.
\end{proof}

\textbf{First order Edgeworth expansion for} $M_n$.
From Corollary \ref{1stEdgeworthSumIndVars} and \eqref{M_n}, we have
\begin{align*}
\EE[n]{ \PP{M_n \leq 2 \sigma_n^{-1} x  \mid \Lambda} - ( \Phi(y_n) + n^{-1/2} \widebar{V}_n^{-3/2} \bar{\kappa}_{3, n} ( 1 - y_n^2) \phi(y_n) / 6 )} =  o(n^{-1/2})
\end{align*}
uniformly in $x\in\RR$, where
$ y_n := \widebar{V}_n^{-1/2} (2 \sigma_n^{-1} x - n^{-1/2} G_n(g_n)) $ and $\bar{\kappa}_{3,n} = 8n^{-1}\sum_{i=1}^n c_{ni}^3$.
It then suffices to show that
\begin{align} \label{M_n_Edgeworth}
& \EE[n]{ \Phi(y_n) + n^{-1/2} \widebar{V}_n^{-3/2} \bar{\kappa}_{3, n} ( 1 - y_n^2) \phi(y_n) / 6 } \nonumber \\
= \quad & \Phi(x) + n^{-1/2} (\kappa_{2,n}^{-3/2} \kappa_{3,n}  (1 - x^2) / 6 - \kappa_{2,n}^{-1/2}  \mu(g_n) ) \phi(x) + o(n^{-1/2}),
\end{align}
uniformly in $x \in \RR$.
To this end, we introduce the following notation.
\begin{definition}
For $\alpha > 0$ and a polynomial $p_n(t)=\sum_{i=0}^{k} c_{ni} t^i$ with random coefficients $c_{ni}$'s, 
$p_n$ is $PO(n^{-\alpha} ; E_{0,n})$ if
$ \EE[n]{ |c_{ni}| } = O(n^{-\alpha}), i = 0, \cdots, k. $
\end{definition}
With this definition, we will show that 
\begin{equation} \label{PO}
\widebar{V}_n  - \kappa_{2, n} = PO(n^{-1} ; E_{0,n}), 
\quad \bar{\kappa}_{3,n} - \kappa_{3,n} = PO(n^{-1/2} ; E_{0,n}),
\end{equation}
when both are treated as polynomials of $x_n = \rho_n^{-1} \sigma_n x$.
To prove the first part, observe that
\begin{align*}
\widebar{V}_n  - \kappa_{2,n }
&= 2 (v_2 (n^{-1/2}  x_n)^2 + 2 v_1 (n^{-1/2}  x_n) + v_0- \textsf{F}_{\gamma_n}( g_n^2)  ) 
\\ & =  2 n^{-1} ( v_2 x_n^2 + 2 n^{-1/2} G_n( g_n^2 h_n) x_n + G_n( g_n^2) ),
\end{align*}
where the second equality uses
$
v_1 - n^{-1} G_n(g_n^2 h_n) = \ssf_{\gamma_n}(g_n^2 h_n) = r_n \ssf_{\gamma_n}(g_n^2) -  \ssf_{\gamma_n}(m_1 g_n^3) = 0,
$
from \eqref{r_n}.
Also, it is clear that $g_n^2 h_n$ and $g_n^2$ satisfy the sufficient
conditions for \Cref{Cor_TailBound_LSS}, hence
\Cref{Cor_limiting_moment} implies that $(\widebar{V}_n  -
\kappa_{2,n})$ is $PO(n^{-1} ; E_{0,n})$. 
The second part of \eqref{PO} can be proved in a similar yet simpler way ; namely,
\begin{align*}
	 \bar{\kappa}_{3,n} - \kappa_{3,n}
	&= 8 n^{-1/2} (n^{-1} u_3 x_n^3 + 3 n^{-1/2} u_2 x_n^2 + 3 u_1 x_n +  n^{-1/2} G_n(g_n^3) ),
\end{align*}
where $u_k = \ssf_n(g_n^3 h_n^k) , k = 1, 2, 3$. These are also absolutely bounded on $E_{0,n}$.

To exploit \eqref{PO}, we introduce a trivial inequality and its consequence as follows.
\begin{proposition} \label{poly_exp_ineq}
	For any univariate polynomial $p$(with deterministic coefficients) and a positive $s$, there exists a constant $C(p, s)$ such that $| p(t) \exp( -s t^2) | \leq C(p, s)$ for all $t \in \RR$.
\end{proposition}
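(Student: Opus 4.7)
The plan is to reduce the claim to a finite sum of one-variable bounds on monomial times Gaussian, and then invoke the basic fact that Gaussian decay dominates polynomial growth.

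First, write $p(t) = \sum_{i=0}^{k} a_i t^i$ and apply the triangle inequality to obtain
\[
|p(t) \exp(-s t^2)| \leq \sum_{i=0}^{k} |a_i| \cdot |t|^i \exp(-s t^2).
\]
It then suffices to show that for each fixed $i \geq 0$ and $s > 0$, the function $g_i(t) := |t|^i \exp(-s t^2)$ is bounded uniformly in $t \in \mR$. Continuity of $g_i$ on $\mR$ together with the limit $g_i(t) \to 0$ as $|t| \to \infty$ (which follows from the super-polynomial decay of the Gaussian factor) already suffices: a continuous function on $\mR$ that vanishes at $\pm\infty$ attains a finite maximum.

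For a completely explicit constant, I would compute this maximum by calculus. For $i \geq 1$ and $t > 0$, solving $g_i'(t) = t^{i-1} \exp(-st^2)(i - 2st^2) = 0$ gives $t^2 = i/(2s)$, so $M_i := \sup_{t \in \mR} g_i(t) = (i/(2s))^{i/2} e^{-i/2}$; for $i=0$ one has $M_0 = 1$. Taking
\[
C(p,s) := \sum_{i=0}^{k} |a_i| M_i
\]
gives the desired uniform bound.

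There is no real obstacle here; the proposition is a routine convenience lemma. Its intended use, as suggested by its placement right after the $PO(n^{-\alpha} ; E_{0,n})$ analysis, is to combine polynomials in $x_n$ with Gaussian factors such as $\phi(x)$ or derivatives of $\Phi$ evaluated at arguments involving $x_n$: a pointwise bound by $C(p,s)$ followed by expectation over the random coefficients preserves the $O(n^{-\alpha})$ order uniformly in $x$.
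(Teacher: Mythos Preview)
Your proof is correct. The paper does not actually supply a proof of this proposition---it introduces it as ``a trivial inequality''---so your standard argument (triangle inequality plus the bound on each monomial times Gaussian, with the explicit calculus for the maximizer) is exactly what is intended and fills in the omitted detail.
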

\begin{corollary} \label{poly_exp_ineq_cor}
	If $p_n$ is $PO(n^{-\alpha} ; E_{0,n})$ for some $\alpha > 0$, then for any positive $s$, 
	$$\sup_{t\in\RR}| \EE[n]{ p_n(t) \exp( -s t^2) } | = O(n^{-\alpha}).$$
\end{corollary}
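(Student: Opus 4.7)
The plan is to reduce the claim to Proposition \ref{poly_exp_ineq} applied monomial-by-monomial. By the definition of $PO(n^{-\alpha}; E_{0,n})$, the polynomial $p_n$ has a fixed degree $k$ (not depending on $n$), so I can write $p_n(t) = \sum_{i=0}^{k} c_{ni} t^i$ with each random coefficient satisfying $\EE[n]{|c_{ni}|} = O(n^{-\alpha})$.

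First I would invoke Proposition \ref{poly_exp_ineq} with the deterministic monomial $t \mapsto t^i$ and the given $s > 0$, producing constants $C_i = C(t^i, s)$ such that $|t|^i \exp(-s t^2) \leq C_i$ uniformly in $t \in \RR$, for each $i \in \{0, 1, \ldots, k\}$. Then the triangle inequality, applied first inside $p_n$ and then inside the expectation, gives for every $t \in \RR$
\begin{align*}
\left| \EE[n]{ p_n(t) \exp(-s t^2) } \right|
&\leq \sum_{i=0}^{k} \EE[n]{|c_{ni}|}\, |t|^i \exp(-s t^2)
\leq \sum_{i=0}^{k} C_i \, \EE[n]{|c_{ni}|}.
\end{align*}
The right-hand side is independent of $t$, so taking the supremum over $t \in \RR$ and substituting $\EE[n]{|c_{ni}|} = O(n^{-\alpha})$ immediately yields the desired $O(n^{-\alpha})$ bound.

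There is no real obstacle here: the argument is a one-line consequence of linearity of expectation, the triangle inequality, and the preceding proposition applied to each of the finitely many monomials. The only subtlety worth noting is that the degree $k$ of $p_n$ is fixed independently of $n$, so that a finite sum of $O(n^{-\alpha})$ terms still contracts to $O(n^{-\alpha})$; if instead $k$ were allowed to grow with $n$, the argument would require additional control on the constants $C_i$ and on how many coefficients contribute nontrivially.
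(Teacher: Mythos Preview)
Your proof is correct and is exactly the argument the paper has in mind: the corollary is stated without proof precisely because it follows immediately from Proposition~\ref{poly_exp_ineq} applied monomial-by-monomial together with the triangle inequality and the definition of $PO(n^{-\alpha};E_{0,n})$. Your remark that the degree $k$ must be fixed in $n$ is the only point worth flagging, and it is indeed implicit in the definition and in every instance where the corollary is invoked.
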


Now we show
\begin{align}
\EE[n]{\Phi(y_n) - \Phi(x) + n^{-1/2} \kappa_{2,n}^{-1/2} \mu(g_n) \phi(x) }   &= o(n^{-1/2}), \label{M_n_Edgeworth_proof_1} \\
\EE[n]{ \widebar{V}_n^{-3/2} \bar{\kappa}_{3,n} ( 1 - y_n^2 ) \phi(y_n) -  \kappa_{2,n}^{-3/2} \kappa_{3,n} ( 1-x^2 ) \phi(x)  }
& = O(n^{-1/2}) \label{M_n_Edgeworth_proof_2}
\end{align}
uniformly in $x \in \RR$, which implies \eqref{M_n_Edgeworth} along with \Cref{poly_exp_ineq} and the tail bound on $E_{0, n}$.
These are fairly easy to prove on any compact subset of $\RR$, but for uniform convergence, the proof is more delicate, due to the dependence of $\widebar{V}_n$ and $\bar{\kappa}_{3,n}$ on $x$.
Although a wide interval of $x$ would be practically meaningful, we prove uniform convergence here.

\begin{proof}[Proof of \eqref{M_n_Edgeworth_proof_1} and \eqref{M_n_Edgeworth_proof_2}]
Observe that on $E_{0,n}$, $\widebar{V}_n$ and $\kappa_{2, n}$ are bounded below by a positive constant uniformly in $x\in\RR$, in view of $\widebar{V}_n \geq v_2^{-1} (v_0 v_2 - v_1^2)$ and \eqref{V_discriminant_ineq}.
On the other hand, by the AM-GM inequality and \eqref{V_discriminant_ineq}, we have the upper bound
\begin{equation} \label{V_n_upper}
\widebar{V}_n  \leq 4(n^{-1} v_2 x_n^2 + v_0).
\end{equation}
Now we can prove \eqref{M_n_Edgeworth_proof_1} as follows : let $\alpha_n = \widebar{V}_n^{-1/2} \kappa_{2,n}^{1/2}$, then it suffices to show that
\begin{align}
&\EE[n]{ \Phi(y_n) - \Phi(\alpha_n x) + n^{-1/2} \widebar{V}_n^{-1/2} G_n(g_n) \phi(\alpha_n x) }, \label{M_n_Edgeworth_proof_1_1} \\
&\EE[n]{ \Phi(\alpha_n x) - \Phi(x) } , \label{M_n_Edgeworth_proof_1_2} \\
&\EE[n]{ n^{-1/2} \widebar{V}_n^{-1/2} G_n(g_n) ( \phi(\alpha_n x) - \phi(x) ) }, \quad \text{and} \label{M_n_Edgeworth_proof_1_3} \\
&\EE[n]{ n^{-1/2} \kappa_{2,n}^{-1/2} \p{ \alpha_n - 1 } G_n(g_n) \phi(x) }\label{M_n_Edgeworth_proof_1_4}
\end{align}
are $O(n^{-1})$ uniformly in $x\in\RR$,
because $\EE[n]{ G_n(g_n) - \mu(g_n) } = o(1)$ from \Cref{Cor_limiting_moment}.
From the second order Taylor expansion of $\Phi(y_n)$ centered at $\alpha_n x$ and using \Cref{poly_exp_ineq}, \eqref{M_n_Edgeworth_proof_1_1} is $O( n^{-1} \EE[n]{ G_n(g_n)^2 } )$, and hence $O(n^{-1})$ uniformly in $x\in\RR$, by \Cref{Cor_limiting_moment}.
Next, for \eqref{M_n_Edgeworth_proof_1_2} and \eqref{M_n_Edgeworth_proof_1_3}, we consider two cases :

(case 1) $ x^2 \leq n $ : 
This assumption implies that $\widebar{V}_n$ is bounded above by a positive constant on $E_{0,n}$, by \eqref{V_n_upper}.
Therefore, on $E_{0,n}$, $\alpha_n$ is bounded below by a positive $\alpha_0$, and thus $\exp(- s t^2) \leq \exp ( - s \beta_0^2 x^2 ) $ for all $t$ between $x$ and $\alpha_n x$ and for all positive $s$, where $\beta_0 = \min(\alpha_0, 1)$.
Using this fact, $| t | \exp (-t^2/2) \leq \exp (-t^2/ 4)  $, and the first order Taylor expansions of $\Phi(\alpha_n x)$ and $\phi(\alpha_n x)$ centered at $x$, it follows that \eqref{M_n_Edgeworth_proof_1_2}, \eqref{M_n_Edgeworth_proof_1_3} are 
\begin{align*}
O(\EE[n]{ | (\alpha_n - 1 ) x | \exp(-\beta_0^2 x^2 /2) } ), \quad 
O( n^{-1/2} \EE[n]{ | G_n(g_n) (\alpha_n - 1 ) x | \exp(- \beta_0^2 x^2 / 4) }),
\end{align*}
respectively.
These are $O(n^{-1})$ uniformly in $x\in[-\sqrt{n}, \sqrt{n}]$, because of
\begin{align}\label{alpha_n_minus_1}
\alpha_n - 1 = \widebar{V}_n^{-1/2}  (\kappa_{2,n} - \widebar{V}_n ) (\widebar{V}_n^{1/2} + \kappa_{2,n}^{1/2})^{-1} 
= PO(n^{-1} ; E_{0,n}),
\end{align}
\Cref{poly_exp_ineq_cor} and the Cauchy-Schwarz inequality(for the second case).

(case 2) $ x^2 > n $ :
In this case we have $\widebar{V}_n = O(n^{-1} x^2) $ on $E_{0, n}$ from \eqref{V_n_upper}.
Then $|\alpha_n x|^{-1} = O(n^{-1/2})$ on $E_{0, n}$ uniformly in $x\in[-\sqrt{n}, \sqrt{n}]^c$, and hence from $0 < 1 - \Phi(|t| ) \leq \phi(|t|) / |t| = O( |t|^{-2})$,
we conclude that $1-\Phi(| x |)$, $1-\Phi(| \alpha_n x |)$, $\phi(x)$, $\phi(\alpha_n x)$  are all $O(n^{-1})$ uniformly in $x\in[-\sqrt{n}, \sqrt{n}]^c$, and so the same is true for \eqref{M_n_Edgeworth_proof_1_2}, \eqref{M_n_Edgeworth_proof_1_3}.

Combining these cases gives the desired result for \eqref{M_n_Edgeworth_proof_1_2} and \eqref{M_n_Edgeworth_proof_1_3}.
Furthermore, \eqref{M_n_Edgeworth_proof_1_4} immediately follows from \eqref{alpha_n_minus_1}, \Cref{poly_exp_ineq_cor} and the Cauchy-Schwarz inequality.

In a similar manner to the proof of \eqref{M_n_Edgeworth_proof_1} just given, we can decompose the RHS of \eqref{M_n_Edgeworth_proof_2} into
\begin{align}
&\EE[n]{ \widebar{V}_n^{-3/2} \bar{\kappa}_{3,n} ( ( 1 - z_n^2 ) \phi(z_n) -  ( 1- (\alpha_n x)^2 ) \phi(\alpha_n x)) }, 
 \label{M_n_Edgeworth_proof_2_1} \\
 &\EE[n]{ \widebar{V}_n^{-3/2} \bar{\kappa}_{3,n} ( ( 1- (\alpha_n x)^2 ) \phi(\alpha_n x)  -  ( 1-x^2 ) \phi(x) ) }, 
 \label{M_n_Edgeworth_proof_2_2} \\
 &\EE[n]{ \widebar{V}_n^{-3/2} (\bar{\kappa}_{3,n} - \kappa_{3,n} ) ( 1-x^2 ) \phi(x) }, 
 \label{M_n_Edgeworth_proof_2_3} \\
  &\EE[n]{  \kappa_{2,n}^{-3/2}  ( \alpha_n^3 - 1  ) \kappa_{3,n} ( 1-x^2 ) \phi(x) }, 
 \label{M_n_Edgeworth_proof_2_4} 
\end{align}
which are to be shown to be $O(n^{-1/2})$ uniformly in $x\in\RR$.
From \eqref{moment_ineq_pf1}, $ \widebar{V}_n^{-3/2} | \bar{\kappa}_{3,n} | $ is bounded above uniformly in on $E_{0,n}$,
which leads to the desired result for \eqref{M_n_Edgeworth_proof_2_1} and \eqref{M_n_Edgeworth_proof_2_2} by the same methods as for  \eqref{M_n_Edgeworth_proof_1_1} and \eqref{M_n_Edgeworth_proof_1_3}, with small changes in details ; the first order Taylor expansion suffices for \eqref{M_n_Edgeworth_proof_2_1},
and case 2 for \eqref{M_n_Edgeworth_proof_2_2} requires $0 < (t^2 - 1) \phi(t) \leq 8 t^{-2} $ if $t^2 > 1$.
Finally, \eqref{PO} and \eqref{alpha_n_minus_1} give the desired properties for \eqref{M_n_Edgeworth_proof_2_3} and \eqref{M_n_Edgeworth_proof_2_4}, respectively. 
\end{proof}

\noindent {\bf 3.5. Delta method for Edgeworth expansion} \label{delta_section}

In this section, we prove that $\delta_n x_n$ is ignorable in the sense of \ref{step4}.
The decomposition given in \eqref{linearization} is inspired by the discussion in \citet[Chap.~2.7]{hall1992bootstrap}. 
The delta method is briefly introduced there as follows : for two statistics $U_n$ and $U'_n$ whose limiting distributions are $N(0, 1)$, if $\Delta_n := U_n - U'_n$ is of order $O_p(n^{-j/2})$ for $j \in \NN$, then ``generally'', $\PP{U_n \leq x} - \PP{U'_n \leq x}$ is of order $O(n^{-j/2})$.
Therefore, if the $(j-1)^{th}$ order Edgeworth expansion for $U_n$ is easy to calculate, so is for $U'_n$.
However, neither sufficient conditions nor a rigorous proof for this method is given there.
Furthermore, $\Delta_n$ is linear in $x$ in our case.
Hence, we prove a version of the delta method for Edgeworth expansion in our context.

\begin{proposition} \label{delta_method}
	Suppose that $U_n$ admits the first order Edgeworth expansion
	\[
	\mathbb{P}_n \p{U_n \leq x} = \Phi(x) + n^{-1/2} p_1(x) \phi(x) + o(n^{-1/2})
	\]
	uniformly in $x\in\RR$, for a polynomial $p_1$.
	Also, assume that random variables $J_n$ do not depend on $x$, and satisfy
	$\mathbb{P}_n (|J_n| > n^{-1/2} \epsilon_n) = o(n^{-1/2})$
	for a non-random sequence $\{\epsilon_n\}$ converging to 0.
	Then $$ \mathbb{P}_n \p{ U_n + x J_n \leq x} - \mathbb{P}_n \p{ U_n \leq x} = o(n^{-1/2}) $$
	uniformly in $x\in\RR$.
	%i.e. $U_n$ and $U_n + x J_n$ have the same first order Edgeworth expansion.
\end{proposition}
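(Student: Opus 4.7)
The plan is to sandwich the event $\{U_n + xJ_n \le x\}$ between events of the form $\{U_n \le x_\pm\}$ with $x_\pm$ close to $x$, and then to show that the hypothesized Edgeworth expansion for $U_n$ is insensitive to such small perturbations, uniformly in $x$.

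I would start by splitting on $B_n := \{|J_n| \le \epsilon_n n^{-1/2}\}$. The hypothesis gives $\mathbb{P}_n(B_n^c) = o(n^{-1/2})$, so the contribution from $B_n^c$ is negligible for both $\mathbb{P}_n(U_n + xJ_n \le x)$ and $\mathbb{P}_n(U_n \le x)$. On $B_n$ one has $|xJ_n| \le |x|\epsilon_n n^{-1/2}$, yielding the set-theoretic sandwich
\[
\{U_n \le x - |x|\epsilon_n n^{-1/2}\} \cap B_n
\subset \{U_n + xJ_n \le x\} \cap B_n
\subset \{U_n \le x + |x|\epsilon_n n^{-1/2}\} \cap B_n.
\]
Writing $x_\pm := x \pm |x|\epsilon_n n^{-1/2}$, the proposition thus reduces to the uniform bound
\[
\sup_{x \in \mathbb{R}} \bigl| \mathbb{P}_n(U_n \le x_\pm) - \mathbb{P}_n(U_n \le x) \bigr| = o(n^{-1/2}).
\]

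Next, I would invoke the hypothesized Edgeworth expansion at both $x_\pm$ and $x$; the two $o(n^{-1/2})$ remainders cancel into the target error, so it suffices to bound $|\Phi(x_\pm) - \Phi(x)|$ and $n^{-1/2}|p_1(x_\pm)\phi(x_\pm) - p_1(x)\phi(x)|$ uniformly in $x$. By the mean value theorem each equals $|x|\epsilon_n n^{-1/2}$ times $\phi$ or $(p_1\phi)'$ evaluated at some intermediate point $x^*$ lying between $x$ and $x_\pm$. For $n$ large enough that $\epsilon_n n^{-1/2} \le 1/2$, one has $|x^*| \ge |x|/2$. Since both $\phi(y)$ and $|(p_1\phi)'(y)|$ are dominated by $Q(|y|)\phi(y)$ for some polynomial $Q$, the two differences are controlled by $|x|\, Q(2|x|)\, \phi(|x|/2)$ multiplied by $\epsilon_n n^{-1/2}$ and $\epsilon_n n^{-1}$, respectively. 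Since $\sup_{x} |x|\, Q(2|x|)\, \phi(|x|/2) < \infty$ by Gaussian decay, both contributions are $o(n^{-1/2})$.

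The main obstacle I anticipate is precisely this uniformity in $x$ as $|x| \to \infty$: a naive chain-rule estimate picks up a spurious factor of $|x|$ from differentiating at $x_\pm$, and on its own this would preclude the uniform $o(n^{-1/2})$ conclusion. The key mechanism is that the intermediate point $x^*$ in the mean value theorem lies at distance at least $|x|/2$ from the origin, so the Gaussian tail $\phi(|x|/2)$ it supplies absorbs any polynomial in $|x|$. At $x=0$ the bound $|x^*| \ge |x|/2$ degenerates, but $x_\pm = x$ makes the probability difference identically zero, so no separate argument is required there.
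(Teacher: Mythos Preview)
Your proposal is correct and follows essentially the same route as the paper: split on $\{|J_n|\le n^{-1/2}\epsilon_n\}$, reduce to bounding $\mathbb{P}_n(|U_n-x|\le |x|n^{-1/2}\epsilon_n)$, and then apply the Edgeworth expansion together with a mean-value estimate in which the intermediate point satisfies $|x^*|\ge |x|/2$ so that $\phi(|x|/2)$ absorbs the polynomial factors. The paper packages the sandwich as a single symmetric-difference inequality and writes the polynomial bound as $|p_2|(3|x|/2)\phi(x/2)$, but the mechanism and the key uniformity observation are identical to yours.
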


\begin{proof}
	Note that 
	\[
	| \PP[n]{ U_n + x J_n \leq x} - \mathbb{P}_n \p{ U_n \leq x } |
	\leq \mathbb{P}_n (|J_n| > n^{-1/2} \epsilon_n) + \mathbb{P}_n ( |U_n - x| \leq |x| n^{-1/2} \epsilon_n ),
	\]
	hence from the assumption $\mathbb{P}_n(|J_n| > n^{-1/2} \epsilon_n)= o(n^{-1/2})$ it suffices to show that 
	\[
	\mathbb{P}_n ( |U_n - x| \leq |x| n^{-1/2} \epsilon_n ) = o(n^{-1/2})
	\]
	uniformly in $x\in\RR$.
	This follows from the uniform convergence assumption on the first order Edgeworth expansion of $U_n$, and the following inequalities : for $y \in [-1/2, 1/2]$, by \Cref{poly_exp_ineq},
	\begin{align*}
	|\Phi(x(1+y)) - \Phi(x))| &\leq |xy| \max_{z\in[-1/2, 1/2]} \phi(x(1+z)) \leq |xy| \phi(x / 2) 
	= O( |y| ),\\
	|p_1(x(1+y)) \phi(x(1+y)) - p_1(x) \phi(x) | &\leq |xy| \max_{z\in[-1/2, 1/2]} |p_2(x(1+z))| \phi(x(1+z)) \\
	&\leq |xy| |p_2|(|3x/2|)\phi(x/2) = O( |y| ).
	\end{align*}
Here $p_2$ is the polynomial satisfying
$\frac{d}{dx}(p_1(x) \phi(x)) = p_2(x) \phi(x)$, and $|p_2|$ is
the polynomial with coefficients being the absolute values of
coefficients of $p_2$.
\end{proof}
Finally, we prove \eqref{apply_delta_method} using this proposition with $U_n = \sigma_n M_n / 2$, $J_n = \rho_n^{-1} \sigma_n^2 \delta_n / 2$ and $\epsilon_n \asymp n^{-\zeta}$ for any $\zeta \in (0, 1/2)$.
Recall the definition of $\delta_n$ from \eqref{delta_n_nu_n} : $\delta_n = n^{-1} G_n(\lambda g_n^2) - (\nu_n - n^{-1/2} r_n S_n( g_n))$.
%, where $\nu_n = - (\lh - \rho_n) n^{-1} z' \Lambda R(\lh) R^2(\rho_n) z$.
As $\mathbb{P}_n( |n^{-1} G_n(m_1 g_n^2) | > n^{-1/2-\zeta} ) = o(n^{-1/2})$ by \Cref{TailBound_LSS}, we only need to consider $(\nu_n - n^{-1/2} r_n S_n( g_n))$.
Observe that from \eqref{key_eq_2} and \eqref{key_eq_denom},
\[
(\lh - \rho_n) (1 + \ell n^{-1} z' \Lambda R^2(\rho_n) z)
= n^{-1/2} \ell \rho_n [S_n(g_n) + n^{-1/2} G_n(g_n)] + (\lh - \rho_n) \nu_n.
\]
Multiply both sides by $ - n^{-1} z' \Lambda R(\lh) R^2(\rho_n) z$ to yield
\begin{equation*}
(1 + \ell n^{-1} z' \Lambda R^2(\rho_n) z) \nu_n = 
- n^{-1/2} \ell \rho_n [S_n(g_n) + n^{-1/2} G_n(g_n)] \cdot n^{-1} z' \Lambda R(\lh) R^2(\rho_n) z + \nu_n^2,
\end{equation*}
%This expression has an advantage over , in that $(1 + \ell n^{-1} z' \Lambda R^2(\rho_n) z)$ is bounded below by 1 on $E_{0,n}$, unlike $(1 + \ell n^{-1} z' \Lambda R(\lh) R(\rho_n) z)$.
because of \eqref{nu_n}. Consequently, on $E_{0,n}$ we have
\begin{align*}
&|\nu_n - n^{-1/2} r_n S_n( g_n)|
\leq (1 + \ell n^{-1} z' \Lambda R^2(\rho_n) z)  | \nu_n - n^{-1/2} r_n S_n( g_n) |
\\ \leq & \ n^{-1/2} \ell \rho_n | S_n(g_n) | \cdot | n^{-1} z' \Lambda R(\lh) R^2(\rho_n) z + (1 + \ell n^{-1} z' \Lambda R^2(\rho_n) z) \tfrac{r_n}{\ell \rho_n} |
\\ &
+ n^{-1} \ell \rho_n | G_n(g_n) | \cdot  | n^{-1} z' \Lambda R(\lh) R^2(\rho_n) z | + \nu_n^2.
\end{align*}
Furthermore, the following holds from \eqref{r_n}, the resolvent identity and \eqref{stoc_decomp}
\begin{align*}
& n^{-1} z' \Lambda R(\lh) R^2(\rho_n) z
+ (1 + \ell n^{-1} z' \Lambda R^2(\rho_n) z) \tfrac{r_n}{\ell \rho_n} 
\\ = & n^{-1} z' \Lambda R(\lh) R^2(\rho_n) z 
+ (1 + \ell n^{-1} z' \Lambda R^2(\rho_n) z) \ssf_{\gamma_n}(m_1 g_n^3) / (1+ \ell \ssf_{\gamma_n}(m_1 g_n^2))
\\= &
(\lh - \rho_n) n^{-1} z' \Lambda R(\lh) R^3(\rho_n) z  + n^{-1} z' \Lambda R^3(\rho_n) z
+ \ssf_{\gamma_n}(m_1 g_n^3) +  (n^{-1} z' \Lambda R^2(\rho_n) z - \ssf_{\gamma_n}(m_1 g_n^2))\tfrac{r_n}{\rho_n}
\\=&
(\lh - \rho_n) n^{-1} z' \Lambda R(\lh) R^3(\rho_n) z
+ n^{-1/2} S_n(m_1 g_n^2(\tfrac{r_n}{\rho_n} - g_n)) + n^{-1} G_n(m_1 g_n^2(\tfrac{r_n}{\rho_n}- g_n)).
\end{align*}
Now considering that $\ell, \rho_n, r_n, \|\Lambda\|_{\infty},$ $\|R(\lh)\|_{\infty}, \|R(\rho_n)\|_{\infty}$ are absolutely bounded on $E_{0,n}$ for $n > n_0(\delta)$, and $\nu_n = -(\lh - \rho_n) n^{-1} z' \Lambda R(\lh) R^2(\rho_n) z$ \eqref{nu_n}, it suffices to show that
\begin{align*}
\mathbb{P}_n( |S_n(g_n) | > n^{1/4 - \zeta/2} ), \quad 
\mathbb{P}_n(| \lh - \rho_n | > n^{-1/4 - \zeta/2}),  \quad
\mathbb{P}_n( n^{-1} z' z > 2), \quad
\mathbb{P}_n( |G_n( g_n ) | > n^{1/2 - \zeta} )
\end{align*}
are of probability $o(n^{-1/2})$ for any $\zeta \in (0, 1/2)$. 
Each such bound can be easily deduced from \Cref{TailBound_Indep}, \Cref{TailBound_LSS} and \Cref{Cor_l_hat}.

%%%%%%%%%%%%%%%%%%%%%%%%%%%%%%%%%%%%%%%%%%%%%%%%%
%%%%%%%%%%%%%%%%%%%%% Section 4 %%%%%%%%%%%%%%%%%%%%% 
%%%%%%%%%%%%%%%%%%%%%%%%%%%%%%%%%%%%%%%%%%%%%%%%%

\setcounter{section}{4} %***
\setcounter{subsection}{1} %***
\setcounter{equation}{0} %-1

\noindent {\bf 4. Discussion}

This study clearly leaves some natural questions for further
research. 
We considered a single supercritical spike; extension to a finite
number of separated simple supercritical eigenvalues is presumably
straightforward.
Less immediately clear is the situation with a supercritical
eigenvalue of multiplicity $K > 1$, as the limiting distribution for
the associated $K$ eigenvalues is $GOE(K)$ rather than ordinary
Gaussian.

A common use of Edgeworth approximations is to improve the coverage
properties of confidence intervals based on Gaussian limit theory. In
ongoing work, we are exploring such improvements for one- and
two-sided intervals for $\ell$. 

Development of a second order Edgeworth approximation (kurtosis
correction) would appear to require a first order or skewness
correction for certain linear statistics in the Bai-Silverstein
central limit theorem, which is not yet available. 

We assumed that the observations $x_j$ were Gaussian and that
assumption is used in an important way to create the i.i.d. variates 
$z = (z_i) = U'Z_1$, independent of the noise eigenvalues $\Lambda$,
as input to the conditional Edgeworth expansion. Thus extension of the
results to non Gaussian $x_j$ is an open issue for future work.

%%%%%%%%%%%%%%%%%%%%%%%%%%%%%%%%%%%%%%%%%%%%%%%%%%%%%%%%%%%%%%%%%%%%%%%%%%%%%%%%%%%%%%%%%%%%%%%%%%%%%%%%%%%%%%%%%%%%%%%%%%%%
\vskip 14pt
\noindent {\large\bf Supplementary Materials}

\newcounter{suppsection}
\setcounter{suppsection}{0}
%\newcounter{suppsubsection}
%\setcounter{suppsubsection}{1}

\newenvironment{suppequation}{%
	%\addtocounter{equation}{0}
	%\refstepcounter{suppsection}
	\renewcommand\theequation{S\thesuppsection.\arabic{equation}}
	\begin{equation}}
{\end{equation}}

%\renewcommand{\theequation}{S.\thesection.\arabic{equation}}

%\fontsize{12}{14pt plus.8pt minus .6pt}\selectfont
We provide proofs of identities and propositions used in the main text.

%%%%%%%%%%%%%%%%%%%%%%%%%%%%%%%%%%%%%%%%%%%%%%%%%%%%%%%%%%%%%%%%%%%%%%
% Appendix section 1 : Identities
%%%%%%%%%%%%%%%%%%%%%%%%%%%%%%%%%%%%%%%%%%%%%%%%%%%%%%%%%%%%%%%%%%%%%%

%\setcounter{section}{1} %***
%\setcounter{subsection}{1} %***
%\setcounter{equation}{0} %-1

\noindent{\bf S1. Identities}
\refstepcounter{suppsection}

\noindent{\bf S1.1. Expectations with respect to Marchenko-Pastur distribution}

Let $\gamma \in (0,\infty), \ell > 1+\sqrt{\gamma}$ and $\rho_n 
%= \ell (\ell-1 + \gamma_n) / (\ell-1) 
= \ell + \ell \gamma_n / (\ell - 1)$. Then
\begin{suppequation} \label{rho_n_deriv}
\frac{\partial \rho_n}{\partial \ell} = \frac{(\ell-1)^2 - \gamma_n}{(\ell-1)^2}.
\end{suppequation}
Also, the Stieltjes transform of the companion Marchenko-Pastur distribution is given by
\[
\ssf_{\gamma}(f_z)
 = (- z + \gamma - 1 + \sqrt{(z-\gamma - 1)^2 - 4 \gamma})/ (2z), 
\quad \forall z \in (b(\gamma), +\infty)
\]
where $f_z(\lambda) := ( \lambda - z)^{-1}$, from equantion (2.8) of \cite{yao2015sample}. 
Substituting $\gamma_n$ into $\gamma$ and $\rho_n$ into $z$(which is possible since $\rho_n > (1+\sqrt{\gamma_n})^2$), it follows that 
\begin{suppequation} \label{F_g_n}
\ssf_{\gamma_n}(g_n) = \ell^{-1}.
\end{suppequation}
Taking partial derivatives of \eqref{F_g_n} with respect to $\ell$, along with \eqref{rho_n_deriv}, gives
\begin{suppequation} \label{F_g_n_2}
\ssf_{\gamma_n}(g_n^2) = (1 - \ell^{-1})^2 ((\ell-1)^2 - \gamma_n)^{-1} =  2 \sigma_n^{-2}
\end{suppequation}
and 
\begin{suppequation} \label{F_g_n_3}
\ssf_{\gamma_n}(g_n^3) =(1-\ell^{-1})^3 ((\ell-1)^3 + \gamma_n) ((\ell-1)^2 - \gamma_n)^{-3},
\end{suppequation}
as desired. %which lead to the explicit formula for $\kappa_{2,n}$ and $\kappa_{3,n}$.

\noindent{\bf S1.2. Explicit expressions of $\mu(g)$ and $\mu(g_n)$}

We use the formula (5.13) in \cite{bai2004clt}.
First, by $x = 1+\gamma+2\sqrt{\gamma}\cos\theta$,
\begin{align*}
\int_{a(\gamma)}^{b(\gamma)} \frac{g(x)}{\sqrt{4\gamma - (x - 1 -\gamma)^2}} dx
& = \int_{-\pi}^{0} \frac{g(1 + \gamma + 2\sqrt{\gamma} \cos \theta)}{\sqrt{1-\cos^2 \theta}} (-\sin \theta)d\theta \\
%& = \int_{-\pi}^{0} g(1 + \gamma + 2\sqrt{\gamma} \cos \theta) d\theta \\
& = \frac{1}{2} \int_{-\pi}^{\pi} g(1 + \gamma + 2\sqrt{\gamma} \cos \theta) d\theta.
\end{align*}
Then, letting $z = \exp(i\theta)$ gives
\begin{align*}
\int_{-\pi}^{\pi} g(1 + \gamma + 2\sqrt{\gamma} \cos \theta) d\theta
&= \oint_{|z| = 1} g(1 + \gamma + \sqrt{\gamma} (z+z^{-1})) (iz)^{-1} dz \\
%&= i \oint_{|z| = 1} \p{\sqrt{\gamma} z^2 + (1 + \gamma - \rho(\ell, \gamma)) z + \sqrt{\gamma}}^{-1} dz \\
&= i \oint_{|z| = 1} (\sqrt{\gamma} z^2 - (\ell-1 + \gamma(\ell-1)^{-1}) z + \sqrt{\gamma})^{-1} dz \\
&= i \oint_{|z| = 1} (z - \sqrt{\gamma}(\ell-1)^{-1})^{-1} \p{\sqrt{\gamma} z - (\ell-1)}^{-1} dz \\
&= - 2\pi (\gamma(\ell-1)^{-1} - (\ell-1))^{-1} 
\\ & = 2\pi (\ell-1) \p{\ell-1-\sqrt{\gamma}}^{-1} \p{\ell-1 + \sqrt{\gamma}}^{-1}
\end{align*}
by Cauchy integral formula with the assumption $\ell-1 > \sqrt{\gamma}$.
Meanwhile,
$
g((1\pm\sqrt{\gamma})^2) = (\rho(\ell, \gamma) - (1\pm\sqrt{\gamma})^2)^{-1}
% = \p{l-1 + \gamma (l-1)^{-1} \mp 2\sqrt{\gamma} }^{-1}
= (\ell-1) \p{\ell-1 \mp \sqrt{\gamma}}^{-2},
$
hence
\begin{align*}
\mu(g) &= (\ell-1) ((\ell-1-\sqrt{\gamma})^{-1} - (\ell-1+\sqrt{\gamma})^{-1})^2 / 4
= \gamma (\ell-1) ((\ell-1)^2 - \gamma)^{-2},
\end{align*}
as desired.
The corresponding expression for $\mu(g_n)$
\begin{suppequation} \label{mu_g_n}
\mu(g_n) = \gamma_n (\ell-1) ((\ell-1)^2 - \gamma_n)^{-2},
\end{suppequation}
is available when $\ell - 1 > \sqrt{\gamma_n}$ i.e. for large enough $n$.

\textbf{Remark.} Although the formula (5.13) is derived only for $\gamma \leq 1$ in \cite{bai2004clt}, the following identity
\[
G_n(f) = \sum_{i=1}^{p} f( \lambda_i) - p F_{\gamma_n}(f)
= \sum_{i=1}^{n} \tilde{f}_n( \tilde{\lambda}_i) - n F_{\gamma_n^{-1}}(\tilde{f}_n)
=: \textsf{G}_p(\tilde{f}_n),
\]
where $\tilde{f}_n (\lambda) := f(\gamma_n \lambda)$ and $\tilde{\lambda}_i := \gamma_n^{-1} \lambda_i $,
turns the setting
\[ n, \quad p, \quad \gamma_n, \quad n^{-1} Z'_2 Z_2, \quad f\]
into
\[ p, \quad n, \quad \gamma_n^{-1}, \quad p^{-1} Z_2 Z'_2, \quad \tilde{f}_n.\]
Thus, along with \Cref{G_n_lemma}(which is proved below), this correspondence gives the same formula for $\gamma > 1$.

\noindent{\bf S2. Propositions}
\refstepcounter{suppsection}

	%%%%%%%%%%%%%%%%%%%%% Proposition 2 %%%%%%%%%%%%%%%%%%%%%%%%%
	\noindent{\bf S2.1. \Cref{TailBound_Indep}} 
	
	We can prove and use results in Example 2.4 of \cite{wainwright2015basic} : the moment generating function of $(z_0^2 - 1)$ where $z_0\sim N(0,1)$ is given by
	\[
	\EE{\exp (\theta(z_0^2-1)) } 
	= \exp (-\theta) ( 1-2\theta )^{-1/2} = \exp(\sum_{k=2}^{\infty} 2^{k-1}\theta^k / k)
	\]
	for $\theta < 1/2$, and is bounded by $ \exp (2 \theta^2) $ for $\theta \in [-1/4, 1/4]$, because
	\begin{align*}
	2 \theta^2 - \sum_{k=2}^{\infty} 2^{k-1}\theta^k / k
	&= \theta^2 (1 - \sum_{k=3}^{\infty} 2^{k-1}\theta^{k-2} / k)
	 \geq \theta^2 (1 - \sum_{k=3}^{\infty} 2^{-k+3} / k)
	% = \theta^2 \p{1 - 8\p{-\log(1-\frac{1}{2})-\frac{1}{2}-\frac{1}{8}}} 
	= \theta^2 (6 - 8 \log 2) > 0.
	\end{align*}
	Combining this, Markov inequality and independence of $z$ and $\Lambda$, it follows that
	\begin{align*}
	 \mathbb{P} (E_{1,n}^{c} \cap E_{2,n}(f, M) \mid \Lambda)
	 &\leq  \EE{I(E_{1,n}^c) \p{\exp (S_n(f / U_f))+ \exp (-S_n(f / U_f)) } 
		\mid \Lambda}  \exp (-M/ U_f)
	\\ & \leq 2 \exp(2 \ssf_n( f^2 / U_f^2) ) \exp (-M/ U_f) \leq 15 \exp (-M/ U_f), 
	\end{align*}
	which directly implies
	$ \mathbb{P}(E_{1,n}^{c} \cap E_{2,n}(f, M)) \leq 15\exp (-M/ U_f) $,
	as desired.
	
	%%%%%%%%%%%%%%%%%%%%% Proposition 3 %%%%%%%%%%%%%%%%%%%%%%%%%
	\noindent{\bf S2.2. \Cref{TailBound_LSS}} 

	Let $ \textsf{f}_n(\lambda) := f_n \p{ (\lambda \vee 0) \wedge (b(\gamma) + \delta) } $, 
	so that $\textsf{f}_n(x^2), n\in\NN$ share a Lipschitz constant $L$, and $G_n(f_n) = G_n(\textsf{f}_n)$ on $E_{1,n}^c$ for all $n\in\NN$. 
	Hence, $ \mathbb{P}(E_{1,n}^c \cap E_{3,n}(f_n, M)) \leq \mathbb{P}( E_{3,n}(\textsf{f}_n, M))$.
	Meanwhile, we have
	\[
	\PP{| p ( F_n(\textsf{f}_n) - \EE{ F_n(\textsf{f}_n) } )| > M }
	\leq 2 \exp (-  M^2 / (  2L^2 ))
	\]
	for $M >0, n\in \NN$ from the Corollary 1.8 of \cite{guionnet2000concentration}(or Lemma A.4 of \cite{paul2007asymptotics}).
	For all $p \geq 1$, from the identity
	$ \EE{|X|^p} = p \int_{0}^{\infty} y^{p-1} \mathbb{P} \p{|X| > y} dy$,
	it follows that $\{ p (F_n(\textsf{f}) - \EE{ F_n(\textsf{f}) } ) \}_{n \in \NN}$ is bounded in $L_p$.
	i.e. is uniformly integrable and thus tight.
	But we assume that $\{ G_n(f_n) \}_{n \in \NN} = \{ G_n(\textsf{f}_n) \}_{n \in \NN} = \{ p \p{ F_n(\textsf{f}_n) - F_{\gamma_n}(\textsf{f}_n) } \}_{n \in \NN}$ is also tight, hence by triangle inequality 
  $ M(\{f_n\}_{n\in\NN}) %= \sup_{n\in\NN} | \EE{ G_n( \textsf{f}_n ) } |
	 = \sup_{n\in\NN}{| p \p{ \EE{ F_n(\textsf{f}_n) } - F_{\gamma_n}(\textsf{f}_n) } |}$
	is finite. Consequently, for $M > 2M(\{f_n\}_{n\in\NN})$,
	\begin{align*}
	\mathbb{P}(  E_{4,n} (\textsf{f}_n, M) ) 
	&\leq \mathbb{P}\p{|  p \p{ F_n(\textsf{f}_n) - \EE{ F_n(\textsf{f}_n) } } | > M - M(\{f_n\}_{n\in\NN})} \\
	&\leq \mathbb{P}\p{|  p \p{ F_n(\textsf{f}_n) - \EE{ F_n(\textsf{f}_n) } } | > M/2}
	\leq 2 \exp (- M^2 / (8L^2)),
	\end{align*}
	as desired.
	
	%%%%%%%%%%%%%%%%%%%%% Lemma 1 %%%%%%%%%%%%%%%%%%%%%%%%%
	\noindent{\bf S2.3. \Cref{G_n_lemma}} 
	
        First, note that in view of the Vitali-Porter and Weierstrass
        theorems(e.g. \citet[Ch.~1.4, 2.4]{schiff2013normal}), there
        exists a neighborhood $\Omega_1$ of $I$ with compact closure
        $\bar{\Omega}_1 \subset \Omega$ such that $f_n$ and $f_n'$
        converge uniformly to $f$ and $f'$ respectively on
        $\bar{\Omega}_1$ and so in particular $\{ f_n \}_{n\in\NN}$
        and $\{ f_n' \}_{n\in\NN}$ are each uniformly bounded on
        $\bar{\Omega}_1$.

The truncation and centralization step runs parallel to
\citet[pp.~559-560]{bai2004clt}, [BS] below.  Let $\tilde{G}_n(\cdot)$
denote the analog of $G_n(\cdot)$ with matrix $B_n$ -- which
does not depend on $f, f_n$ -- replaced by $\tilde{B}_n$.
Then the argument there
%of \citet[pp.~559-560]{bai2004clt} 
shows that $\tilde{G}_n(f) - G_n(f)$ and
$\tilde{G}_n(f_n) - G_n(f_n) \stackrel{p}{\to} 0$ because
$f, \{f_n' \}_{n\in\NN}$ are uniformly bounded on $\bar{\Omega}_1$.
Therefore, it suffices to consider when $G_n(\cdot)$ denotes the
centered linear spectral statistic based on the truncated and centered
variables.

Now we argue as on [BS] p.563.
%\citet[p.~563]{bai2004clt}. 
Let $M_n(z)$ be the normalized
Stieltjes transform difference and $\hat{M}_n(z)$ be its modification
on $\mathcal{C}$ as defined on [BS, p.561] 
% \citet[p.~561]{bai2004clt} 
-- none of these depend
on $f, f_n$. For all large $n$, we have
\begin{equation*}
G_n(f_n) - G_n(f) = - \frac{1}{2 \pi i} \int [f_n(z) - f(z)] M_n(z)
dz
\end{equation*}
almost surely. 
In addition, by arguing as shown on [BS] p. 563,
%\citet[p.~563]{bai2004clt},
\begin{equation*}
\int [f_n(z) - f(z)][ M_n(z) - \hat{M}_n(z)]  dz \stackrel{p}{\to} 0
\end{equation*}
as $n \to \infty$ because $f_n$ are uniformly bounded on
$\bar{\Omega}_1$ which contains the contour of integration.

Finally,
\begin{equation*}
\left| \int [f_n(z) - f(z)] \hat{M}_n(z)  dz \right|
\leq \| f_n - f\|_\infty \int |\hat{M}_n(z)| dz
\stackrel{p}{\to} 0,
\end{equation*}
since $f_n \to f$ uniformly on $\bar{\Omega}_1$ and, crucially,
$\{ \hat{M}_n(\cdot) \}$ is a tight sequence on $C(\mathcal{C}, \RR^2)$ as
shown in Lemma 1 of [BS], 
%\cite{bai2004clt}  
and hence so is $\int |\hat{M}_n(z)| dz$.
	
	%%%%%%%%%%%%%%%%%%%%% Corollary 4 %%%%%%%%%%%%%%%%%%%%%%%%%
	\noindent{\bf S2.4. \Cref{Cor_limiting_moment}}
	
	Let $k\in\NN$. 
	From the proof of \Cref{TailBound_LSS}, $\{ (G_n(\textsf{f}_n))^k\} _{n\in\NN}$ is uniformly integrable by 
	$ \EE{|X|^p} = p \int_{0}^{\infty} y^{p-1} \mathbb{P} \p{|X| > y} dy, p \geq 1$ again. 
	Also, from \Cref{G_n_lemma} and continuous mapping theorem, 
	$ (G_n(\textsf{f}_n))^k \overset{d}{\to} (N(\mu(f), \sigma^2(f))^k$.
	Therefore, by Theorem 2.20 of \cite{van2000asymptotic}, a combination of Skorokhod representation theorem and Vitali's convergence theorem, we obtain
	$\lim_{n\rightarrow\infty}\EE{(G_n(\textsf{f}_n))^k} = \tau_k(f).$
	Also, $\Big| \EE{I(E_{n}^c) (G_n(\textsf{f}_n))^k} \Big| \leq \PP{E_{n}^c} \EE{(G_n(\textsf{f}_n))^{2k}} = o(1)$ by Cauchy and the assumption $\lim_{n\rightarrow \infty} \PP{E_n} = 1 $,
	hence it follows from another assumption $E_n \subset E_{1, n}^c$ and $G_n(f_n) = G_n(\textsf{f}_n)$ on $E_{1,n}^c$ that
	\[
	\lim_{n\rightarrow\infty}\EE{I(E_{n}) (G_n(f_n))^k}
	= \lim_{n\rightarrow\infty} \p{ \EE{(G_n(\textsf{f}_n))^k} - \EE{I(E_{n}^c) (G_n(\textsf{f}_n))^k} }
	= \tau_k(f),
	\]
	as desired.
\par
%%%%%%%%%%%%%%%%%%%%%%%%%%%%%%%%%%%%%%%%%%%%%%%%%%%%%%%%%%%%%%%%%%%%%%%%%%%%%%%%%%%%%%%%%%%%%%%%%%%%%%%%%%%%%%%%%%%%%%%%%%%%

\vskip 14pt
\noindent {\large\bf Acknowledgements}

We thank Zhou Fan for providing the main idea of the proofs of \Cref{TailBound_LSS} and \Cref{1stEdgeworthSumIndVars}.
Supported by NIH R01 EB001988, NSF DMS 1407813 and a Samsung scholarship. 

\par

%%%%%%%%%%%%%%%%%%%%%%%%%%%%%%%%%%%%%%%%%%%%%%%%%%%%%%%%%%%%%%%%%%%%%%%%%%%%%%%%%%%%%%%%%%%%%%%%%%%%%%%%%%%%%%%%%%%%%%%%%%%%
\markboth{\hfill{\footnotesize\rm JEHA YANG AND IAIN JOHNSTONE} \hfill}
{\hfill {\footnotesize\rm EDGEWORTH CORRECTION FOR ROY'S STATISTIC IN A SPIKED PCA MODEL} \hfill}

\bibhang=1.7pc
\bibsep=2pt
\fontsize{9}{14pt plus.8pt minus .6pt}\selectfont
\renewcommand\bibname{\large \bf References}

\markboth{\hfill{\footnotesize\rm JEHA YANG AND IAIN JOHNSTONE} \hfill}
{\hfill {\footnotesize\rm EDGEWORTH CORRECTION FOR ROY'S STATISTIC IN A SPIKED PCA MODEL} \hfill}

\vskip .65cm
\noindent
Department of Statistics, Stanford University, Stanford, CA 94305, U.S.A.
\vskip 2pt
\noindent
E-mail: jeha@stanford.edu
\vskip 2pt

\noindent
Department of Statistics, Stanford University, Stanford, CA 94305, U.S.A.
\vskip 2pt
\noindent
E-mail: imj@stanford.edu
% \vskip .3cm
%\centerline{(Received ???? 20??; accepted ???? 20??)}\par
%%%%%%%%%%%%%%%%%%%%%%%%%%%%%%%%%%%%%%%%%%%%%%%%%%%%%%%%%%%%%%%%%%%%%%%%%%%%%%%%%%%%%%%%%%%%%%%%%%%%%%%%%%%%%%%%%%%%%%%%%%%%
%%%%%%%%%%%%%%%%%%%%%%%%%%%%%%%%%%%%%%%%%%%%%%%%%%%%%%%%%%%%%%%%%%%%%%%%%%%%%%%%%%%%%%%%%%%%%%%%%%%%%%%%%%%%%%%%%%%%%%%%%%%%


\begin{thebibliography}{11}
\expandafter\ifx\csname
natexlab\endcsname\relax\def\natexlab#1{#1}\fi
\expandafter\ifx\csname url\endcsname\relax
  \def\url#1{\texttt{#1}}\fi
\expandafter\ifx\csname urlprefix\endcsname\relax\def\urlprefix{URL
}\fi

\bibitem[Bai and Silverstein(2004)]{bai2004clt} Bai, Z. D. and Silverstein, J. W. (2004). CLT for linear spectral statistics of large-dimensional sample covariance matrices. \textit{Ann. Probab.} \textbf{32(1A)}, 553-605.

\bibitem[Bai and Yao(2005)]{bai2005on} Bai, Z. D. and Yao, J. (2005). On the convergence of the spectral empirical process of Wigner matrices. \textit{Bernoulli}, \textbf{11(6)}, 1059-1092.

\bibitem[Baik, Ben Arous and P{\'e}ch{\'e}(2005)]{baik2005phase} Baik, J., Arous, G. B., and P{\'e}ch{\'e}, S. (2005). Phase transition of the largest eigenvalue for nonnull complex sample covariance matrices. \textit{Ann. Probab.} \textbf{33(5)}, 1643-1697.

\bibitem[Benaych-Georges, Guionnet, and Maida(2011)]{benaych2011fluctuations} Benaych-Georges, F., Guionnet, A., and Maida, M. (2011). Fluctuations of the extreme eigenvalues of finite rank deformations of random matrices. 
\textit{Electron. J. Probab.}, \textbf{16(60)}, 1621-1662.

\bibitem[Guionnet and Zeitouni(2000)]{guionnet2000concentration} Guionnet, A. and Zeitouni, O. (2000). Concentration of the spectral measure for large matrices. \textit{Electron. Comm. Probab.} \textbf{5}, 119-136.

\bibitem[Hall(1992)]{hall1992bootstrap} Hall, P. (1992). \textit{The bootstrap and Edgeworth expansion.} Springer Science \& Business Media

\bibitem[Johnstone(2001)]{johnstone2001distribution} Johnstone, I. M. (2001). On the distribution of the largest eigenvalue in principal components analysis. \textit{Ann. Statist.} \textbf{29}, 295-327.

\bibitem[Muirhead and Chikuse(1975)]{muirhead1975asymptotic} Muirhead, R. J. and Chikuse, Y. (1975). Asymptotic expansions for the joint and marginal distributions of the latent roots of the covariance matrix. \textit{Ann. Statist.} \textbf{3}, 1011-1017.

\bibitem[Paul(2007)]{paul2007asymptotics} Paul, D. (2007). Asymptotics of sample eigenstructure for a large dimensional spiked covariance model. \textit{Statist. Sinica} \textbf{17}, 1617-1642.

\bibitem[Petrov(1975)]{petrov1975sums} Petrov, V. V. (1975). \textit{Sums of independent random variables.} Vol.\textbf{82.} Springer Science \& Business Media

\bibitem[Schiff(2013)]{schiff2013normal} Schiff, J. L. (2013). \textit{Normal families.} Springer Science \& Business Media.

\bibitem[Van der Vaart(2000)]{van2000asymptotic} Van der Vaart, A. W. (2000). \textit{Asymptotic statistics.} Vol.\textbf{3.} Cambridge University Press.

\bibitem[Wainwright(2015)]{wainwright2015basic} Wainwright, M. (2015). \textit{Basic tail and concentration bounds.} \url{http://www.stat.berkeley.edu/~mjwain/stat210b/Chap2_TailBounds_Jan22_2015.pdf}

\bibitem[Yao, Zheng and Bai(2015)]{yao2015sample} Yao, J., Zheng, S. and Bai, Z. D. (2015). \textit{Sample covariance matrices and high-dimensional data analysis}. Cambridge University Press.

%%%%%%%%%%%%%%%%%%%%%%%%%%%%%%%%%%%%%%%%%%%%%%%%%%%%%%%%%%%%%%%%%%%%%%%%%%%%%%%%%%%%%%%%%%%%%%%%%%%%%%%%%%%%%%%%%%%%%%%%%%%%
\end{thebibliography}
\end{document}